\def\marker{\>\hbox{${\vcenter{\vbox{
					\hrule height 0.4pt\hbox{\vrule width 0.4pt height 6pt
						\kern6pt\vrule width 0.4pt}\hrule height 0.4pt}}}$}\>}

\documentclass[12pt]{article}
\usepackage{indentfirst}
\usepackage{epsfig}
\usepackage{fullpage}
\usepackage{amsmath,amsfonts,amssymb,amsthm,graphics,amscd}
\usepackage{pgf,tikz}
\usepackage{graphicx}
\usepackage{subfigure}
\usepackage{amsmath,amsfonts,amssymb,amsthm,graphics,amscd,pst-plot,pstricks}
\usepackage{float}
\usepackage{graphicx}

\usepackage{xcolor}
\usepackage{caption}
\newtheorem {Theorem}  {Theorem}
\newtheorem {Lemma}{Lemma}[section]

\newtheorem {Proposition}[Lemma]{Proposition}

\theoremstyle{definition}
\newtheorem{Definition}{Definition}

\newtheorem{Conjecture}{Conjecture}

\newcommand{\D}{\Delta}

\newcommand{\phiv}{\varphi}

\newcommand{\CC}{\mathcal{C}}

\usepackage[
pdfauthor={},
pdftitle={Edge-chromatic $4$-critical graphs and Overfull Conjecture for graphs with maximum degree $4$},
pdfstartview=XYZ,
bookmarks=true,
colorlinks=true,
linkcolor=blue,
urlcolor=blue,
citecolor=blue,
bookmarks=false,
linktocpage=true,
hyperindex=true
]{hyperref}

\renewcommand{\thefootnote}
\begin{document}
\begin{center}
		
{\Large \bf Edge-chromatic $4$-critical graphs and Overfull Conjecture for graphs with maximum degree $4$}
	
\vspace{10mm}

{\large Chunhui Ge$^{a}$, Gregory Gutin$^{b}$, Xuli Qi$^{a}$}

\vspace{4mm}

\baselineskip=0.25in

{\it  $^a$ Department of Mathematics, Hebei University of Science and Technology, China.}\\
{\it  $^b$ Department of Computer Science, Royal Holloway University of London, UK.}\\

\vspace{6mm}		
\end{center}

\footnotetext{{\it E-mail addresses:} {\tt 2024110004@stu.hebust.edu.cn}; {\tt g.gutin@rhul.ac.uk}; {\tt xuliqi@hebust.edu.cn}}

\date{}
	
\begin{abstract}

Let $G$ be a simple graph with maximum degree $\D(G)$ and chromatic index $\chi'(G)$. A graph $G$ is called edge-chromatic {\em $\D$-critical} if $\chi'(G)=\D(G)+1$ and $\chi'(H)< \chi'(G)$ for every proper subgraph $H$ of $G$, and $G$ is overfull if $\left|E(G)\right|>\Delta(G)\lfloor |V(G)|/2\rfloor$.
In 1986, Chetwynd and Hilton proposed the influential Overfull Conjecture: If $G$ is a simple graph with $\D(G)>\frac{|V(G)|}{3}$, then $G$ is a Class $2$ graph if and only if $G$ contains an overfull subgraph $H$ with $\D(H)=\D(G)$.
Motivated by the structural analysis for $4$-critical graphs in \cite{CR2019} (SIAM J. Discrete Math. 2019), we show more properties in this paper,
especially four new forbidden configurations in any $4$-critical graph, and provide a new structural proof of  Overfull Conjecture for graphs with maximum degree $4$.	
		\par {\small {\it Keywords: } edge coloring, Overfull Conjecture, multi-fan, Kierstead path, short-branch}
	\end{abstract}

\section{Introduction}
	In this paper we only consider simple and connected graphs.
	We generally follow the book \cite{SSTF2012} of Stiebitz et al. for notation and terminology.
	Let $G=(V(G),E(G))$ be a simple graph with vertex set $V(G)$ and edge set $E(G)$.
	We denote by $\Delta(G)$ and $\delta(G)$ (or simply $\Delta$ and $\delta$), respectively the maximum degree and minimum degree of graph $G$. A graph $G$ is $\Delta$-regular if $\delta(G) = \Delta(G) = \Delta$. Let $[k]:=\{1,\dots,k\}$.	
	A (proper) \emph{$k$-edge-coloring}  of a graph $G$ is a map $\phiv:E(G)\to [k]$ that assigns to every edge $e$ of $G$ a color $\phiv(e)\in [k]$ such that no two adjacent edges of $G$ receive the same color.
	Let $\CC^k(G)$ denote the set of all $k$-edge-colorings of $G$.
	The \emph{chromatic index} $\chi'(G)$ is the least integer $k$ such that $\CC^k(G)\ne\emptyset$.
	A graph	$G$ is \emph{edge-chromatic critical} if $\chi'(H)<\chi'(G)$ for every proper subgraph $H$ of $G$. Furthermore, we call $G$ a \emph{$\D$-critical} graph if $G$ is edge-chromatic critical and $\chi'(G)=\D(G)+1$.
	
	Vizing \cite{Vizing1965-1} and Gupta \cite{Gupta1967} proved that $\Delta(G)\le \chi'(G)\le\Delta(G)+1$.
	Following Fiorini and Wilson \cite{FW1977}, if $\chi'(G)=\Delta(G)$, then  we call $G$ a \emph{Class 1} graph; otherwise, we call it a \emph{Class 2} graph.
	If a graph $G$ satisfies $\left|E(G)\right|> \Delta(G)\lfloor |V(G)|/2\rfloor$, then it is called \emph{overfull} and every edge-coloring of $G$ uses at least $\D(G)+1$ colors.
	Holyer \cite{Holyer1981} proved that it is NP-complete to determine whether an arbitrary graph is Class $1$.
	In contrast, Chetwynd and Hilton \cite{CHETWYND1986} conjectured that there is a polynomial-time algorithm to determine the chromatic index for graphs $G$ with $\D(G)>\frac{|V(G)|}{3}$. And the Overfull Conjecture of Chetwynd and Hilton \cite{CHETWYND1986,CHETWYND1989} is as follows.
	
	\begin{Conjecture}[Overfull Conjecture]
		Let $G$ be a graph with $\D(G)>\frac{|V(G)|}{3}$. Then $G$ is a Class $2$ graph if and only if $G$ contains an overfull subgraph $H$ with $\D(H)=\D(G)$.
	\end{Conjecture}

    It suffices to consider connected graphs: if $G$ is disconnected and Class 2, then some component contains a $4$-critical subgraph $H$, and the subsequent argument applies to $H$.
	
	Note that the degree condition  $\D(G)>\frac{|V(G)|}{3}$ in the conjecture above is best possible since for the graph $P^*$ obtained from the Petersen graph by deleting one vertex, $\chi'(P^*)=4$ and $\D(P^*)=\frac{|V(P^*)|}{3}=3$ but with no overfull subgraphs.
	Seymour \cite{Seymour1979} showed that deciding whether a graph $G$ contains an overfull subgraph with maximum degree $\D(G)$ can be determined in polynomial time.
	Furthermore, the Overfull Conjecture can also imply several other longstanding conjectures such as the Just Overfull Conjecture \cite{SSTF2012}, the Vertex-splitting Conjecture \cite{HZ1997}, and Vizing's 2-factor Conjecture, Independence Conjecture and Average Degree Conjecture \cite{Vizing1965-2,Vizing1968} when restricted to graphs $G$ with $\D(G)>\frac{|V(G)|}{3}$.
	
	There are not many direct research results with the maximum degree constraints for the Overfull Conjecture.
	For large maximum degree, Chetwynd and Hilton \cite{CHETWYND1989} proved that the conjecture is true for all graphs $G$ with $\D(G)\ge |V(G)|-3$.
	Recently, Shan \cite{Shan2024-2} confirmed that for any $0<\varepsilon\le\frac{1}{14}$, there exists a positive integer $n_0$ such that if $G$ is a graph with  $|V(G)|\ge n_0$ and $\D(G)\ge(1-\varepsilon)|V(G)|$, then the conjecture holds.
	In the small order and small maximum degree direction,  Chetwynd and Yap \cite{CC1983}  verified in 1983 that the Overfull Conjecture holds for all graphs of order at most $10$. In 1998, Brinkmann and Steffen \cite{BS1998}  used computational methods and showed that there are exactly two non-overfull critical graphs of order $11$, both of which are $3$-critical. Combining this with \cite{CC1983}, one can deduce that the Overfull Conjecture holds when $\Delta=4$.  In 2008, Pi \cite{Pi}   gave a theoretical proof of the Overfull Conjecture for the case $\Delta=4$. However, there are several gaps in the proofs of main theorem and lemmas in \cite{Pi}. Specifically, several errors appear in their analyses concerning the endpoints of Kempe chains. Motivated by \cite{CR2019}, we obtain several structural properties of $4$-critical graphs, especially four new forbidden configurations in any $4$-critical graph, i.e.,
     Propositions \ref{P2.13}-\ref{P2.16} listed in the next section of this paper, which are instrumental for a new structural proof of the Overfull Conjecture for the case $\Delta=4$.
	\begin{Theorem}\label{T1}
		Let $G$ be a graph of order $n$ with $\Delta(G)=4 >\frac{n}{3}.$ Then
		$G$ is a Class $2$ graph if and only if $G$ contains an overfull subgraph $H$ with $\D(H)=4$.
	\end{Theorem}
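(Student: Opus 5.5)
The easy direction is immediate. If $G$ contains an overfull subgraph $H$ with $\D(H)=4$, then $\chi'(G)\ge\chi'(H)\ge 5=\D(G)+1$, so $G$ is Class~$2$. For the converse, assume $G$ is Class~$2$ with $\D(G)=4$ and $n\le 11$, since $4>n/3$. Then $G$ contains a $4$-critical subgraph $G'$ with $\D(G')=4$. It suffices to show that $G'$ itself is overfull. Note $|V(G')|\le n\le 11$; in the disconnected case we pass to a component, as the paper remarks. So the whole task reduces to the following claim: \emph{every $4$-critical graph $G$ with $|V(G)|\le 11$ is overfull.}

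Write $m=|V(G)|$ and $n_k$ for the number of vertices of degree $k$. Overfullness means $2|E(G)|>4\cdot 2\lfloor m/2\rfloor$. For odd $m$ this says $\sum_v (4-d(v))<4$, i.e.\ $2n_2+n_3\le 3$ with $n_1=0$. For even $m$, overfullness is impossible: $|E(G)|\le 2m=4\lfloor m/2\rfloor$. So I must show two things. First, no $4$-critical graph has even order at most $10$. Second, a $4$-critical graph of odd order $m\in\{5,7,9,11\}$ has total deficiency $2n_2+n_3\le 3$.

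The tool is a lower bound on the number of edges of a $4$-critical graph, obtained by discharging. I would use the structural results of the next section: Vizing's Adjacency Lemma and the multi-fan/Kierstead-path facts from \cite{CR2019}, together with the new forbidden configurations, Propositions~\ref{P2.13}--\ref{P2.16}. These control how many $2$- and $3$-vertices can appear and how they sit near $4$-vertices. Concretely, VAL gives the following. A $2$-vertex has both neighbours of degree $4$, each with its other neighbours of degree $4$. A $3$-vertex has at least two neighbours of degree $4$. Each $4$-vertex adjacent to a $3$-vertex has at least two other neighbours of degree $4$. Give each vertex initial charge $d(v)-4$ and move charge from $4$-vertices to low-degree vertices through the short branches. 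The forbidden configurations should guarantee that each $4$-vertex near a deficient vertex has enough distinct full-degree neighbours to absorb the deficit. I aim for an inequality of the form $\sum_v(4-d(v))\le c$ with $c$ small, or, failing that, a bound like $n_4\ge$ (some multiple of) $2n_2+n_3$ that forces $2n_2+n_3\le 3$ once $m\le 11$. For even $m$, the parity and counting arguments should rule out small deficiency. If the deficiency is $0$ the graph is $4$-regular of even order, so I must exclude $4$-regular critical graphs of order at most $10$. For these I would show directly, via the degree structure and $\D>m/3$, that a $4$-coloring exists.

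The main obstacle is the middle regime, $m\in\{9,10,11\}$ with a few $3$-vertices. Here the counting bound from VAL alone is too weak; that is exactly where the examples of order $11$ of Brinkmann--Steffen live, which are $3$-critical but not $4$-critical. I would attack it by fixing a deficient vertex $y$ and an edge $xy$. I would take a coloring of $G-xy$ and build a maximal multi-fan at $x$ and Kierstead paths. Propositions~\ref{P2.13}--\ref{P2.16} would then forbid a second low vertex within distance two of the fan. This forces the low-degree vertices to be spread out, and with only $m\le 11$ vertices available, at most one $3$-vertex, or one $2$-vertex plus one $3$-vertex, can fit. That yields deficiency at most $3$, so odd-order graphs are overfull and even orders lead to a contradiction.
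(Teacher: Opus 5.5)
\textbf{There is a genuine gap: you reduce the theorem correctly, but both claims that carry its content are left unproved.} The easy direction and the reduction are fine and match the paper's framing: pass to a $4$-critical subgraph of order at most $11$, and note that such a graph is overfull exactly when its order is odd and $2n_2+n_3\le 3$.

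\textbf{Even order.} You must show that \emph{no} $4$-critical graph of order $6$, $8$ or $10$ exists, whatever its deficiency, since no even-order graph with $\Delta=4$ is overfull. ``Parity and counting'' gives nothing here. For instance, a hypothetical $10$-vertex example with two $3$-vertices and eight $4$-vertices violates no degree count. The $4$-regular subcase you single out is the trivial one: in a $4$-colouring of $G-xy$, the colour missing at $x$ forms a perfect matching of $G-x$, so the order is odd. The paper does not prove the even case either; it imports it from the computer-assisted result of Brinkmann and Steffen, Lemma~\ref{L2.8}. You need that lemma, the Chetwynd--Yap verification for order at most $10$, or a genuine replacement.

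\textbf{Odd order.} Your discharging rule is never specified, and the mechanism you intend is false: Propositions~\ref{P2.13}--\ref{P2.16} do not keep low-degree vertices apart. $K_5-e$ is $4$-critical and its two $3$-vertices have three common neighbours. So for every edge $xy$ at a $3$-vertex, the other $3$-vertex is adjacent to $x$. Your target ``at most one $3$-vertex, or one $2$-vertex plus one $3$-vertex'' also ignores that $n_3$ is even. The admissible profiles are $(n_2,n_3)\in\{(0,0),(1,0),(0,2)\}$, and $K_5-e$ realises $(0,2)$. Moreover, $K_5-e$ has $n_4=\tfrac32(2n_2+n_3)$. Hence no inequality $n_4\ge c\,(2n_2+n_3)$ valid for all $4$-critical graphs can exclude the profile $(n_2,n_3,n_4)=(0,4,7)$ on $11$ vertices.

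That profile is the heart of the paper's proof. The paper proceeds as follows:
\begin{itemize}
\item Lemma~\ref{L2.5} gives $n_2+n_3\le 5$.
\item Lemmas~\ref{L2.6}--\ref{L2.7}, Proposition~\ref{P2.10} and Claim~1 eliminate every profile containing a $2$-vertex other than $(1,0)$.
\item Four $3$-vertices on $9$ or $11$ vertices are then ruled out by an exhaustive analysis of how they and their $4$-neighbours can be joined.
\end{itemize}
That last analysis uses Lemma~\ref{L2.3}, Propositions~\ref{P2.11}--\ref{P2.16} and connectivity. In several branches it also needs an explicit $4$-edge-colouring of a forced $11$-vertex graph (Figure~2). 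Nothing in your sketch substitutes for this step.
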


\section{Definitions and preliminary results}
	
	Let $G$ be a graph. For each edge $e \in E(G)$, let $G-e$ denote the graph obtained by
	deleting the edge $e$ from $G$. An edge $e$ is called a \emph{critical edge} of $G$ if
	$\chi'(G-e)<\chi'(G)$. A graph $G$
	is {\em edge-chromatic critical} if $G$ is a Class $2$ graph and every edge is critical. For $u \in V(G)$, denote by $d_G(u)$ the degree of $u$ in $G$.
	For adjacent vertices $u,v\in V(G)$, we call $(u,v)$ a \emph{full-deficiency pair} of $G$ if
	$d_G(u)+d_G(v)=\Delta(G)+2$. A $k$-{\em vertex} in $G$ is a vertex with degree $k$.
	For $u\in V(G)$, a $k$-{\em neighbor} of $u$ is a neighbor of $u$ that is a $k$-vertex in $G$. Let $n_i$ denote the number of $i$-vertices in $G$ with order $n$ for $i \in [\Delta]$. Let $N_G(u)$ denote the set of neighbors of $u \in V(G)$. For $u,v \in V(G)$, let $dist_G(u,v)$ denote the distance between $u$ and $v$, defined as the length of a shortest path connecting $u$ and $v$ in $G$. For $S \subseteq V(G)$, we define $dist_G(u,S)=\min_{v \in S}dist_G(u,v)$. For $T \subseteq V(G)$, let $G[T]$ denote the induced subgraph on $T$, that is, the subgraph with vertex set $T$ and each edge of $G$ with both endpoints in $T$.
	
	Let $G$ be a graph with an edge $e\in E(G)$, and $\phiv\in \CC^{k}(G-e)$ be an edge coloring
	for some integer $k\ge\D$. For $v \in V(G)$, let $\phiv(v)$ be the set of colors
	assigned to edges incident with $v$, and let $\overline{\varphi}(v) = [k]\setminus\phiv(v)$
	be the set of colors not assigned to any edge incident with $v$. The set $\phiv(v)$ is called the
	set of colors \emph{present} at $v$ and $\overline{\varphi}(v)$ is called the set of colors \emph{missing} at $v$. It is easy to see that
	$|\phiv(v)| + |\overline{\varphi}(v)| =k$ for each vertex $v\in V(G)$.
	For $X\subseteq V(G)$, we define $\overline{\varphi}(X)=\bigcup_{v\in X}
	\overline{\varphi}(v)$.
	The set $X$ is {\em elementary with respect to} $\phiv$ (or simply \emph{$\phiv$-elementary}),
	if $\overline{\varphi}(u)\cap\overline{\varphi}(v)=\emptyset$ for any two distinct vertices
	$u,v\in X$.
	Let $E_{\varphi,\alpha}(G)$ denote the set of edges colored
	with a color $\alpha\in [k]$.

	For any two distinct colors $\alpha,\beta\in[k]$, let $H$ be the  subgraph induced
	by $E_{\varphi,\alpha}(G)$ and $E_{\varphi,\beta}(G)$.
	Every component of $H$ is either a path or an even cycle, each such component is called an
	\emph{$(\alpha,\beta)$-chain} of $G$ (or simply a {\em Kempe} chain). If an $(\alpha,\beta)$-chain of $G$ contains both vertices $x$ and $y$ with respect to $\varphi$, then $x$ and $y$ are \emph{ $(\alpha,\beta)$-linked}.
	Swapping the colors $\alpha$ and $\beta$ along an $(\alpha,\beta)$-chain $C$ gives a
	new (proper) $k$-edge-coloring of $G$,
	which also belongs to
	$\mathcal{C}^k(G-e)$. This operation is called a \emph{Kempe change}. If $C$ is a path with an endvertex $u$, we call this operation an 	\emph{$(\alpha,\beta)$-swap} at $u$.
	Let $\alpha \in \overline{\varphi}(u)$ and $\beta,\gamma \in \varphi(u)$.
	An $(\alpha,\beta)-(\beta, \gamma)$-swaps at $u$ consists of two consecutive changes: we first swap the colors along $P_u(\alpha,\beta,\varphi)$ to get a $k$-edge-coloring $\varphi'$, and then swap the colors along  $P_u(\beta,\gamma,\varphi')$.
	For $v \in V(G)$, let $P_v(\alpha,\beta,\varphi)$ or simply $P_v(\alpha,\beta)$ denote the unique
	$(\alpha,\beta)$-chain containing  $v$.
	{\em For any two distinct vertices $u,v\in V(G)$, $P_u(\alpha,\beta,\varphi)$ and $P_v(\alpha,\beta,\varphi)$ are either identical or
		disjoint, which is a fundamental property in our proofs.}
	Let $P_{[a,b]}(\alpha, \beta,\varphi)$ denote the subchain of an $(\alpha,\beta)$-chain $P$ with endvertices $a$ and $b$.
	Swapping the colors $\alpha$ and $\beta$ along the subchain $P_{[a,b]}(\alpha,
	\beta,\varphi)$ is still called a Kempe change, but the resulting edge-coloring may no longer be proper.
	We write $uv:\alpha\rightarrow\beta$ to denote recoloring the edge $uv$ from $\alpha$ to $\beta$.
	For $x,y \in P_u(\alpha,\beta,\phiv)$, if $x$  lies between $u$ and $y$ along the path, then we say that $P_u(\alpha,\beta,\phiv)$ meets $x$ before $y$.

	\begin{Definition}[Multi-fan]
		Let $G$ be a graph with an edge $e_1=xy_1$, and an edge coloring $\phiv\in \CC^{k}(G-e_1)$
		for some integer $k\ge\D(G)$.  A multi-fan at $x$ with respect to $e_1$ and $\phiv$ is a
		sequence $F=(x,e_1,y_1,\dots,e_p,y_p)$ with $p \ge 1$ consisting of edges $e_1, \dots,e_p$
		and vertices $x,y_1, \dots ,y_p$ satisfying the following two conditions$\colon$
		\begin{itemize}
			\item [{ F1.}]	The edges $e_1, \dots, e_p$ are distinct,  and $e_i =xy_i$
			for $1\le i\le p$.
			\item [{ F2.}] For every edge $e_i$ with $2 \le i \le p$, there exists a
			vertex $y_j$ with $1 \le j < i$ such that $\phiv(e_i) \in \overline{\varphi}(y_j)$.
		\end{itemize}
	\end{Definition}
	
	Let $F=(x,e_1,y_1,\dots,e_p,y_p)$ be a multi-fan at $x$ with respect to $e_1$ and $\phiv$, and $y_{l_1}, y_{l_2},\dots, \allowbreak y_{l_h}$ be a subsequence of $y_2,\dots,y_p$. For any color $\alpha \in [k]$, the $\alpha$-induced sequence with respect to $e_1$ and $\phiv$ satisfies $\phiv(e_{l_1})=\alpha \in \overline{\varphi}(y_1)$ and $\phiv(e_{l_i}) \in \overline{\varphi}(y_{l_{i-1}})$ for every $2\le i\le h$.
	Note that $(x,e_1,y_1,e_{l_1},y_{l_1},\dots,e_{l_h},y_{l_h})$ is still a multi-fan at $x$ with respect to $e_1$ and $\phiv$.
	The colors in $\overline{\varphi}(y_{l_i})$ are  induced by $\alpha$ for every $i \in [h]$.
	
	\begin{Definition}[Kierstead path]
		Let $G$ be a graph with an edge $e_1=y_0y_1$, and an edge coloring $\phiv\in \CC^{k}(G-e_1)$ for some integer $k\ge\D(G)$.	
		A {\em Kierstead} path with respect to $e_1$ and $\phiv$ is a sequence $K=(y_0,e_1,y_1,\dots,e_p,y_p)$ with $p \ge 1$ consisting of edges $e_1, \dots,e_p$ and vertices $y_0,y_1, \dots ,y_p$ satisfying the following two conditions$\colon$
		\begin{itemize}
			\item [{K1.}] The vertices $y_0, \dots, y_p$ are distinct and $e_i=y_{i-1}y_i$ for $1 \le i \le p$.
			\item [{K2.}] For every edge $e_i$ with $2 \le i \le p$, there exists a vertex $y_j$  with $0 \le j < i$ such that $\phiv(e_i) \in \overline{\varphi}(y_j)$.
		\end{itemize}
	\end{Definition}

	\begin{Lemma}[\cite{CCJ2025,SSTF2012}] \label{L2.2}
		Let $G$ be a Class $2$ graph with a critical edge $e_1=xy_1$. Let $\phiv\in\CC^{\D}(G-e_1)$ and $F=(x,e_1,y_1,\dots,e_p,y_p)$ be a multi-fan at $x$ with respect to $e_1$ and $\phiv$. Then the following statements hold.
		\begin{itemize}
			\item [$(1)$] $V(F)$ is $\phiv$-elementary;
			\item [$(2)$] If $\alpha \in \overline{\varphi}(x)$ and $\beta \in
			\overline{\varphi}(y_i)$ for $ 1 \le i \le p$, then $x$ and $y_i$ are
			$(\alpha,\beta)$-linked;
			\item [$(3)$] Let $i,j \in [p]$ be distinct indices with $\delta \in \overline{\varphi}(y_i)$ and $\lambda \in \overline{\varphi}(y_j)$. If $\delta,\lambda$ are induced by distinct colors from $\overline{\varphi}(y_1)$, then $y_i$ and $y_j$ are $(\delta,\lambda)$-linked.
		\end{itemize}
	\end{Lemma}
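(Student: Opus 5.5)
The plan is to prove the three statements together by induction on $p$, the length of the multi-fan. The engine is the standard Vizing-type recoloring argument: every contradiction comes from producing a $\D$-edge-coloring of $G$, which is impossible because $G$ is Class 2. Throughout, we may assume $\overline{\varphi}(x)\neq\emptyset$ and $\overline{\varphi}(y_1)\neq\emptyset$, since both endpoints of the uncolored edge $e_1$ miss at least one color. Moreover $\overline{\varphi}(x)\cap\overline{\varphi}(y_1)=\emptyset$, for otherwise we could color $e_1$ directly.

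\textbf{Statement (2).} The base of the argument is the following claim: for any multi-fan $F$ with $V(F)$ elementary, if $\alpha\in\overline{\varphi}(x)$ and $\beta\in\overline{\varphi}(y_i)$, then $x$ and $y_i$ are $(\alpha,\beta)$-linked.
\begin{itemize}
\item Suppose not, and swap along $P_{y_i}(\alpha,\beta)$. After the swap, $\alpha$ is missing at both $x$ and $y_i$.
\item Take the chain $y_1=y_{j_0},y_{j_1},\dots,y_{j_t}=y_i$ given by F2, in which $\varphi(e_{j_s})\in\overline{\varphi}(y_{j_{s-1}})$.
\item Elementarity guarantees that the swap does not disturb the colors $\varphi(e_{j_s})$ or their missing status at earlier vertices. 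The only possible exception is when $\alpha$ or $\beta$ is one of these colors, and it must then be handled by the elementarity bookkeeping.
\item Shift the fan: recolor $e_{j_t}$ with $\alpha$, recolor each $e_{j_{s}}$ with $\varphi(e_{j_{s+1}})$, and finally color $e_1$ with $\varphi(e_{j_1})$.
\item This is a proper $\D$-edge-coloring of $G$, a contradiction.
\end{itemize}

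\textbf{Statement (1).} We prove elementarity by induction on $p$. Assume $V(F-y_p)$ is elementary and that some color $\gamma$ lies in $\overline{\varphi}(y_p)\cap\overline{\varphi}(z)$ with $z\in V(F)\setminus\{y_p\}$.
\begin{itemize}
\item \emph{Case $z=x$.} Shift the fan along the F2-chain ending at $y_p$. This makes $e_p$ uncolored while $\gamma$ is missing at both $x$ and $y_p$, so we color $e_p$ with $\gamma$ and obtain a contradiction.
\item \emph{Case $z=y_j$, $j<p$.} Pick $\alpha\in\overline{\varphi}(x)$. By (2) applied to the elementary fan $F-y_p$, the vertices $x$ and $y_j$ are $(\alpha,\gamma)$-linked. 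Hence $P_{y_p}(\alpha,\gamma)$ is a different chain, and it avoids $x$.
\item Swap along $P_{y_p}(\alpha,\gamma)$. Now $\alpha$ is missing at both $x$ and $y_p$, which reduces to the previous case. The F2 colors on $F$ must be checked to be preserved; they are, because $\alpha,\gamma$ are not the fan colors whose missing status the shift needs, or the swap can be arranged to avoid them.
\end{itemize}

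\textbf{Statement (3).} Suppose $\delta$ and $\lambda$ are induced by distinct colors of $\overline{\varphi}(y_1)$, and $y_i,y_j$ are not $(\delta,\lambda)$-linked. Swap along $P_{y_i}(\delta,\lambda)$.
\begin{itemize}
\item The $\delta$-induced sequence reaching $y_j$ uses only colors induced by one root color of $\overline{\varphi}(y_1)$. The sequence to $y_i$ uses colors induced by a different root color.
\item Consequently the fan obtained as the union of the two induced sequences stays a multi-fan after the swap: the edges whose colors are $\delta$ or $\lambda$ lie on the induced sequences beyond the $\delta$/$\lambda$-missing vertices and can be ordered suitably.
\item After the swap, however, $y_i$ and $y_j$ both miss the same color, contradicting (1).
\end{itemize}

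The main obstacle is the detailed verification, in (1) and (3), that the Kempe swap does not destroy condition F2 for the relevant sub-fan. The approach I would take is to restrict to the minimal induced sub-fan, meaning the $\alpha$-induced sequence, and to argue that a swapped edge of $F$ could only carry a color whose missing vertex appears later in that sequence, so the ordering can be truncated accordingly.
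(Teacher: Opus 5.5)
The paper does not prove this lemma; it quotes it from the cited sources. Your argument is correct and is the standard proof given there: a Vizing shift along the F2-chain when $y_p$ shares a missing color with $x$, a Kempe swap on $P_{y_p}(\alpha,\gamma)$ to reduce the case $z=y_j$ to that one, and ``swap, then contradict (1)'' for (2) and (3).

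The swap-invariance you defer is routine, but the reason you give in (1) is wrong, since $\gamma$ can color an edge of the F2-chain to $y_p$ whose witness is $y_j$. The correct reason in (1) and (2) is that the swapped chain avoids $x$, so it recolors no edge of $F$; by elementarity its far end lies outside $V(F)$, so only the starting vertex's missing set changes. In (3), truncating the two induced sequences at $y_i$ and $y_j$ leaves no fan edge colored $\delta$ or $\lambda$, and the same endpoint argument then applies.
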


	\begin{Lemma}[Vizing's Adjacency Lemma \cite{Vizing1965-1}] \label{L2.1}
		Let $G$ be a Class $2$ graph with maximum degree $\D$.
		If $e=xy$ is a critical edge of $G$, then $x$ has at least $max\{\D-d_G(y)+1,2\}$ $\D$-neighbors.	
	\end{Lemma}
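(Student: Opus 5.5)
We prove Lemma \ref{L2.1} (Vizing's Adjacency Lemma) using the multi-fan machinery of Lemma \ref{L2.2}. Since $e=xy$ is critical, there is a coloring $\phiv\in\CC^{\D}(G-e)$. In $G-e$ we have $d_{G-e}(x)=d_G(x)-1\le \D-1$ and $d_{G-e}(y)=d_G(y)-1$. Hence $|\overline{\varphi}(x)|\ge 1$ and $|\overline{\varphi}(y)|=\D-d_G(y)+1\ge 1$.

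Let $F=(x,e_1,y_1,\dots,e_p,y_p)$ be a maximal multi-fan at $x$ with respect to $e_1=e$ and $\phiv$, with $y_1=y$. By Lemma \ref{L2.2}(1), $V(F)$ is $\phiv$-elementary. By maximality, every color missing at some $y_i$ is present at $x$: if $\beta\in\overline{\varphi}(y_i)$ were missing at $x$, then $x$ and $y_i$ could not be $(\alpha,\beta)$-linked for $\alpha\in\overline{\varphi}(x)$, contradicting Lemma \ref{L2.2}(2). So each such color $\beta$ lies on an edge $xz$, and maximality forces $z\in\{y_2,\dots,y_p\}$. Elementarity then shows that the edges $e_2,\dots,e_p$ carry exactly the colors of $\bigcup_{i=1}^p\overline{\varphi}(y_i)$, each color once.

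To count $\D$-neighbors, write $k_i=|\overline{\varphi}(y_i)|$ for $i\ge 2$. In $G-e$ each such $y_i$ has $d_G(y_i)=\D-k_i$, while $k_1=\D-d_G(y)+1$. The fan colors give
\[ p-1=\sum_{i=1}^p k_i = k_1+\sum_{i\ge 2}k_i, \]
so $\sum_{i\ge2}(1-k_i)=k_1$. Each $y_i$ with $k_i=0$ is a $\D$-vertex and contributes $1$ to the left side; every other term is $\le 0$. Hence the number of $\D$-neighbors among $y_2,\dots,y_p$ is at least $k_1=\D-d_G(y)+1$. This is the main counting step, and the one requiring care: it needs the exact accounting of fan edges against missing colors described above.

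It remains to obtain the bound $2$ when $d_G(y)=\D$, where the count only gives $k_1=1$. Suppose $x$ had exactly one $\D$-neighbor. Then $d_G(y)<\D$ unless $y$ is that neighbor, in which case $k_1=1$ and we need a second one. If $y$ itself is a $\D$-vertex, it is not counted among $y_2,\dots,y_p$, but the counting above still yields at least $k_1\ge1$ further $\D$-neighbors among $y_2,\dots,y_p$, giving at least $2$ in total. If $d_G(y)<\D$, then $k_1\ge 2$ and we get at least $2$ directly. The only delicate point is checking that the fan-maximality and elementarity arguments work in $G-e$ with $\D$ colors, which is exactly what Lemma \ref{L2.2} provides.
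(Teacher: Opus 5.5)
The paper gives no proof of this lemma; it quotes it from Vizing, and classical proofs work directly with Vizing fans and explicit Kempe-chain exchanges. Your proof is correct, and it follows the modern multi-fan route: it is essentially the fan equation $\sum_{i=1}^{p}(d_G(y_i)+1-\Delta)=2$ for a maximal multi-fan, as in Stiebitz et al.

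The count works as follows:
\begin{itemize}
\item The map $e_j\mapsto\varphi(e_j)$ on $\{e_2,\dots,e_p\}$ is injective because the coloring is proper at $x$.
\item It is onto $\bigcup_i\overline{\varphi}(y_i)$ because each such color is present at $x$, and maximality forces the edge carrying it to be a fan edge.
\item $|\bigcup_i\overline{\varphi}(y_i)|=\sum_i k_i$ by elementarity.
\end{itemize}
Together these give $\sum_{i\ge 2}(1-k_i)=k_1$. Your final case split is right, though written more roundabout than needed. If $d_G(y)<\Delta$ then $k_1\ge 2$. If $d_G(y)=\Delta$, then $y$ itself is a $\Delta$-neighbor in addition to the at least one among $y_2,\dots,y_p$.

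Relying on Lemma \ref{L2.2} is not circular, since the multi-fan lemma is proved directly by Kempe changes without the adjacency lemma. What your route buys is that all the chain-swapping is packaged into Lemma \ref{L2.2}, leaving a clean counting argument. The classical fan argument is self-contained but longer.

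One small slip: the fact that no color of $\overline{\varphi}(y_i)$ is missing at $x$ does not come from maximality. Your justification via Lemma \ref{L2.2}(2) also breaks down when $\overline{\varphi}(x)=\{\beta\}$, since then there is no $\alpha\neq\beta$ to pair with. The fact follows immediately from Lemma \ref{L2.2}(1), because $x\in V(F)$ and $V(F)$ is $\varphi$-elementary.
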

	
	It is easy to check the following proposition by Lemma \ref{L2.1}.
	
	\begin{Proposition}\label{P2.3}
		For $4$-critical graphs, the following structural properties hold.
		\begin{itemize}
			
			\item [$(1)$] Every $2$-vertex is adjacent to exactly two $4$-vertices and every $4$-vertex adjacent to a $2$-vertex has all its remaining neighbors being $4$-vertices. Consequently, no two distinct $2$-vertices share a common $4$-neighbor, and a $2$-vertex cannot share a $4$-neighbor with any $3$-vertex;

			\item [$(2)$] Every $4$-vertex has at most two $3$-neighbors; every $3$-vertex has at least two $4$-neighbors; and every $3$-vertex has at most one $3$-neighbor.
		\end{itemize}
	\end{Proposition}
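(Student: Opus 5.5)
Every claim follows by applying Lemma \ref{L2.1} with $\D=4$ to a suitable critical edge; since $G$ is $4$-critical, every edge is critical. Two preliminary facts will be used. First, $G$ has no $1$-vertex: if $d_G(y)=1$ and $xy\in E(G)$, Lemma \ref{L2.1} would require $x$ to have $\D-1+1=4$ neighbors of degree $4$, but $y$ is a neighbor of $x$ of degree $1$ and $d_G(x)\le 4$. Second, for an edge $xy$, the number of $4$-neighbors of $x$ is at most $d_G(x)-1$ whenever $y$ is not a $4$-vertex, since $y$ itself is a neighbor of $x$ that is not counted.

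For part $(1)$, let $y$ be a $2$-vertex with neighbors $x_1,x_2$. Applying Lemma \ref{L2.1} to the edge $x_iy$ shows that $x_i$ has at least $4-2+1=3$ neighbors of degree $4$. Since $y$ is a non-$4$ neighbor of $x_i$, this forces $d_G(x_i)\ge 4$, so $d_G(x_i)=4$ and the other three neighbors of $x_i$ are all $4$-vertices. Next, apply Lemma \ref{L2.1} to the edge $yx_1$ with $x=y$: the vertex $y$ must have at least $\max\{4-4+1,2\}=2$ $4$-neighbors, so both neighbors of $y$ are $4$-vertices. This is consistent with what we just derived and gives ``exactly two'' because $d_G(y)=2$. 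The ``consequently'' statements then follow directly. A $4$-vertex adjacent to a $2$-vertex has all of its other neighbors of degree $4$, so it cannot also be adjacent to a second $2$-vertex or to a $3$-vertex.

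For part $(2)$, let $x$ be a $4$-vertex with a $3$-neighbor $y$. Lemma \ref{L2.1} applied to $xy$ gives $x$ at least $4-3+1=2$ $4$-neighbors, so at most $2$ of its $4$ neighbors have degree less than $4$; in particular, $x$ has at most two $3$-neighbors. If $x$ has no $3$-neighbor, the bound holds trivially. For a $3$-vertex $y$, pick any neighbor $x$ and apply Lemma \ref{L2.1} to the edge $yx$ at $y$. This gives at least $\max\{4-d_G(x)+1,2\}\ge 2$ $4$-neighbors of $y$. It follows that at most one of the three neighbors of $y$ is not a $4$-vertex, and hence $y$ has at most one $3$-neighbor.

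There is no real obstacle in this argument. The only point needing care is to apply Lemma \ref{L2.1} at the correct endpoint of each edge, and to note that $G$ is connected and Class $2$, so every edge is critical and there are no vertices of degree $0$ or $1$.
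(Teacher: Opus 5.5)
Your proof is correct and is the same approach as the paper's: the paper simply asserts the proposition follows from Lemma \ref{L2.1}, and your argument carries this out by applying Vizing's Adjacency Lemma at the correct endpoint of each edge. Your preliminary remark excluding $1$-vertices is harmless but is not needed for any of the stated properties.
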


	\begin{Lemma}[\cite{CCS2022}] \label{L2.3}
		Let $G$ be a Class $2$ graph of order $n$ that has a full-deficiency pair $(a,b)$, where the edge $ab$ is a critical edge of $G$. Then $G$ satisfies the following properties.
		\begin{itemize}
			\item [$(1)$] For every $x\in(N_G(a)\cup N_G(b))\backslash\{a,b\}$,
			$d_G(x)=\Delta$;
			\item [$(2)$] For every $x\in V(G)\backslash\{a,b\}$, if $dist_{G}(x,\{a,b\})=2$, then $d_G(x)\ge\Delta-1$. If also $d_G(a)<\Delta$ and
			$d_G(b)<\Delta$, then $d_G(x)=\Delta$.
		\end{itemize}
	\end{Lemma}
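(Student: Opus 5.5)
The last statement is Lemma \ref{L2.3}, a cited result from \cite{CCS2022}, but I will plan a direct proof. Let $\D=\D(G)$. Fix a coloring $\phiv\in\CC^{\D}(G-ab)$. Since $d_G(a)+d_G(b)=\D+2$, in $G-ab$ we get
\[
|\overline{\varphi}(a)|+|\overline{\varphi}(b)|=(\D-d_G(a)+1)+(\D-d_G(b)+1)=\D.
\]
Since $G$ is Class 2, $\overline{\varphi}(a)\cap\overline{\varphi}(b)=\emptyset$ (otherwise $ab$ could be colored). Hence $\overline{\varphi}(a)$ and $\overline{\varphi}(b)$ partition $[\D]$, so every color of $[\D]$ is missing at exactly one of $a,b$. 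This complementarity is the engine of the whole argument.

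For part (1), take $x\in N_G(a)\setminus\{b\}$ and suppose $d_G(x)<\D$, so some $\gamma\in\overline{\varphi}(x)$. The multi-fan $(a,ab,b,ax,x)$ is valid because $\phiv(ax)\in\varphi(a)$, and the partition gives $\phiv(ax)\in\overline{\varphi}(b)$. By Lemma \ref{L2.2}(1), $\{a,b,x\}$ is $\phiv$-elementary, so $\gamma$ is missing at neither $a$ nor $b$. This contradicts the fact that every color is missing at $a$ or $b$. Hence $d_G(x)=\D$, and the case $x\in N_G(b)$ is symmetric.

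For part (2), let $x$ be at distance 2 from $\{a,b\}$, say adjacent to $y\in N_G(a)\setminus\{b\}$. By part (1), $d_G(y)=\D$. Suppose $d_G(x)\le\D-2$, so $x$ misses two colors $\gamma,\gamma'$. Let $\beta=\phiv(ax)$, which lies in $\overline{\varphi}(b)$. Let $\tau=\phiv(xy)$. I would try to extend to a Kierstead path $K=(b,ba,a,ay,y,yx,x)$. It is valid as soon as $\tau$ lies in $\overline{\varphi}(b)\cup\overline{\varphi}(a)\cup\overline{\varphi}(y)$, which always holds by the partition. The standard theorem of Kierstead (see \cite{SSTF2012}) says that a Kierstead path on four vertices is elementary provided $d(y)<\D$ or a suitable degree condition holds. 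Here that does not apply directly, since $d(y)=\D$. So I would instead argue by Kempe changes. Each of $\gamma,\gamma'$ is missing at $a$ or $b$. Consider $\gamma\in\overline{\varphi}(a)$, say, and the $(\gamma,\beta)$-chain at $b$. By Lemma \ref{L2.2}(2) applied to the fan at $b$ (or at $a$), $a$ and $b$ are linked for every pair $(\alpha,\beta)$ with $\alpha\in\overline{\varphi}(a)$ and $\beta\in\overline{\varphi}(b)$. Thus the chain $P_a(\gamma,\tau)$ or $P_a(\gamma,\beta)$ cannot end at $x$. Swapping a chain at $x$ that avoids $a$ and $b$ lets us make $\tau$ missing at $x$. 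We can then recolor $xy$, and shift the fan at $a$ or the fan at $y$, producing a coloring of $G$ with $\D$ colors.

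The main obstacle is this case analysis in part (2): we must track where $\tau$ sits ($\overline{\varphi}(a)$ versus $\overline{\varphi}(b)$) and whether $\gamma,\gamma'$ fall on the same side. Having two missing colors at $x$ is exactly what lets one of them be chosen on the side opposite $\tau$, so that the relevant chain starting at $x$ is disjoint from the linked $a$–$b$ chain. For the strengthened claim, assume $d_G(a),d_G(b)<\D$ and $d_G(x)=\D-1$. Then $x$ misses a single color $\gamma$. In this case both $a$ and $b$ miss at least two colors, so there is freedom to choose the opposite-side color at $a$ or $b$ instead of at $x$. The same swap-and-shift argument should then yield a contradiction, giving $d_G(x)=\D$.
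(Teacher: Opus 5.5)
The paper gives no proof of this lemma (it is quoted from \cite{CCS2022}), so I am judging your argument on its own. Your part (1) is correct and complete: $\overline{\varphi}(a)$ and $\overline{\varphi}(b)$ partition $[\Delta]$. Hence $\varphi(ax)\in\overline{\varphi}(b)$ for every $x\in N_G(a)\setminus\{b\}$, so $(a,ab,b,ax,x)$ is a multi-fan, and Lemma~\ref{L2.2}(1) forces $\overline{\varphi}(x)=\emptyset$.

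Part (2), however, is not proved. First, $\beta=\varphi(ax)$ is undefined since $x\notin N_G(a)$; you mean $\varphi(ay)$. More seriously, your Kempe paragraph contains no step that actually produces a $\Delta$-coloring of $G$. By part (1), $y$ misses no color, so $xy$ can only change color through a chain passing through $y$. A swap at $x$ that makes $\tau=\varphi(xy)$ missing at $x$ therefore just recolors $xy$, moving its color from one of $\overline{\varphi}(a),\overline{\varphi}(b)$ to the other. ``Recolor $xy$ and shift the fan at $a$ or $y$'' is never made concrete. The idea of choosing one of $\gamma,\gamma'$ ``opposite'' $\tau$ also does not close anything. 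After normalizing, the hard configuration is $d_G(a)=\Delta$ (so $d_G(b)=2$), $\overline{\varphi}(a)=\{\varphi(xy)\}$, and both missing colors of $x$ in $\overline{\varphi}(b)\setminus\{\varphi(ay)\}$. There, swaps of whole chains ending at $x$ only reproduce configurations of the same type; one needs partial-chain recolorings of Kierstead type. The missing ingredient is the known bound $|\overline{\varphi}(x)\cap(\overline{\varphi}(a)\cup\overline{\varphi}(b))|\le 1$ for the Kierstead path $K=(b,ba,a,ay,y,yx,x)$. Together with $\overline{\varphi}(a)\cup\overline{\varphi}(b)=[\Delta]$ it gives $d_G(x)\ge\Delta-1$ at once, but you neither prove nor invoke it.

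For the strengthened claim you dismissed the right tool for the wrong reason. In $K$ the second vertex $y_1$ is $a$, not $y$, and the four-vertex Kierstead path theorem (see \cite{SSTF2012}) gives elementarity as soon as $\min\{d_G(y_1),d_G(y_2)\}<\Delta$. Orient $K$ through whichever of $a,b$ the vertex $x$ is reached from. Then $d_G(a),d_G(b)<\Delta$ makes $V(K)$ elementary, so $\overline{\varphi}(x)=\emptyset$.

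In this setting you can even do it by hand. Color $ab$ with $\beta=\varphi(ay)$ and uncolor $ay$.
\begin{itemize}
\item If $\varphi(yx)\in\overline{\varphi}(a)$, then $(y,ya,a,yx,x)$ is a multi-fan, so $\overline{\varphi}(x)\cap(\overline{\varphi}(a)\cup\{\beta\})=\emptyset$. Suppose $x$ still misses some $\gamma$, and choose $\alpha\in\overline{\varphi}(a)\setminus\{\varphi(yx)\}$; this is where $|\overline{\varphi}(a)|\ge 2$ is used. By Lemma~\ref{L2.2}(2), $P_x(\alpha,\gamma)$ avoids $a$ and $b$, and it contains neither $ay$ nor $yx$. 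Swapping it makes $x$ miss $\alpha\in\overline{\varphi}(a)$ while $\varphi(yx)\in\overline{\varphi}(a)$ still holds, a contradiction.
\item If $\varphi(yx)\in\overline{\varphi}(b)$, one or two swaps of chains ending at $x$ reduce to the previous case.
\end{itemize}
So the second half of (2) is easy once set up correctly. The first half, in the case $d_G(a)=\Delta$, is exactly where your proposal has no argument.
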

	
	We call the structure $H$ a {\it short-branch}, where $V(H)=\{a,b,u,x,y\}$ and $E(H)=\{ab,bu,ux,uy\}$.

	\begin{Lemma}[\cite{Qi-Feng2026}] \label{L2.4}
		Let $G$ be a Class $2$ graph, $H\subseteq G$ be a short-branch with $V(H)=\{a,b,u,x,y\}$ and $E(H)=\{ab,bu,ux,uy\}$, and let $\phiv\in\CC^\Delta(G-ab)$. If
		\[
		\begin{gathered}
			K=(a,ab,b,bu,u,ux,x)\;{and}\;K^*=(a,ab,b,bu,u,uy,y)
		\end{gathered}
		\]
		are Kierstead paths with respect to $ab$ and $\phiv$, then  $|\overline{\varphi}(x)\cap(\overline{\varphi}(a)\cup\overline{\varphi}(b))|+|\overline{\varphi}(y)\cap(\overline{\varphi}(a)\cup\overline{\varphi}(b))|\le 1$. Furthermore, if $(a,b)$ is a full-deficiency pair, then $max\{d_G(x),d_G(y)\}=\Delta(G)$.
	\end{Lemma}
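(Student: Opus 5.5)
The plan is to prove the inequality by contradiction, using Kempe changes, and then obtain the full-deficiency statement as a short counting corollary.

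Write $\beta=\phiv(bu)$, $\gamma=\phiv(ux)$, $\gamma^*=\phiv(uy)$. By K2, $\beta\in\overline{\varphi}(a)\cup\overline{\varphi}(b)$. Also $\gamma,\gamma^*\in\overline{\varphi}(a)\cup\overline{\varphi}(b)\cup\overline{\varphi}(u)$. As a first step I will record that $\{a,b,u\}$ is $\phiv$-elementary. This is the standard fact for Kierstead paths on three vertices. It follows from Lemma \ref{L2.2}, since $(b,ba,a,bu,u)$ is a multi-fan at $b$ with respect to $ab$ and $\phiv$ once we note $\beta\in\overline{\varphi}(a)$; if instead $\beta\in\overline{\varphi}(b)$, a single recoloring of $bu$ handles it. It also gives that $a$ and $b$ are $(\alpha,\beta')$-linked for all $\alpha\in\overline{\varphi}(a)$ and $\beta'\in\overline{\varphi}(b)$.

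Now suppose the sum is at least $2$. There are two cases.
\begin{itemize}
\item[(i)] $x$ (or symmetrically $y$) misses two colors of $\overline{\varphi}(a)\cup\overline{\varphi}(b)$.
\item[(ii)] $x$ misses some $\tau\in\overline{\varphi}(a)\cup\overline{\varphi}(b)$ and $y$ misses some $\tau^*\in\overline{\varphi}(a)\cup\overline{\varphi}(b)$.
\end{itemize}
In each case the aim is a sequence of Kempe changes that yields a $\Delta$-edge-coloring of $G-ab$ in which $a$ and $b$ miss a common color. That contradicts the fact that $ab$ is critical and $G$ is Class $2$.

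The typical move is the following. Take $\tau\in\overline{\varphi}(x)\cap\overline{\varphi}(a)$ and a color $\eta\in\overline{\varphi}(u)$. Consider the chain $P_x(\tau,\eta)$.
\begin{itemize}
\item If it does not end at $u$, swap it. Then $\eta\in\overline{\varphi}(x)\cap\overline{\varphi}(u)$, so we may recolor $ux:\gamma\to\eta$. Now $\gamma$ is missing at $u$, which breaks the elementarity of $\{a,b,u\}$ (when $\gamma\in\overline{\varphi}(a)\cup\overline{\varphi}(b)$) or allows $bu$ to be shifted so that $a$ and $b$ share a color.
\item If it does end at $u$, then it avoids $a$ and $b$. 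Swapping a chain at $a$ or $b$ instead, for example $P_a(\tau,\beta)$ or $P_b(\tau,\cdot)$, keeps $K$ a Kierstead path and reduces us to the first subcase.
\end{itemize}

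In case (ii) the two colors $\tau$ and $\tau^*$ should be placed on the two edges $ux$ and $uy$ simultaneously. I would first normalize, by swaps at $a$ or $b$ away from $u$, so that $\tau\ne\tau^*$ and both lie in $\overline{\varphi}(a)$ say. Then I would apply the previous argument to whichever of $x,y$ is not linked to $u$.

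The main obstacle is the endpoint analysis of the relevant Kempe chains. A chain from $x$ can pass through $y$, $a$ or $b$. After a swap, the sequence $K^*$ may stop being a Kierstead path, since the color $\gamma^*$ may no longer be missing where it was. This is exactly where the paper notes that earlier proofs had gaps. I will handle it by always choosing the swapped chain to be the one whose ends are disjoint from $\{a,b,u\}$, which is possible by the disjointness of distinct chains. I will also distinguish carefully whether $\gamma$ and $\gamma^*$ lie in $\overline{\varphi}(u)$ or in $\overline{\varphi}(a)\cup\overline{\varphi}(b)$.

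For the last statement, suppose $(a,b)$ is a full-deficiency pair. In $G-ab$ we have
\[
|\overline{\varphi}(a)|+|\overline{\varphi}(b)|=(\Delta-d_G(a)+1)+(\Delta-d_G(b)+1)=\Delta ,
\]
and $\overline{\varphi}(a)\cap\overline{\varphi}(b)=\emptyset$, so $\overline{\varphi}(a)\cup\overline{\varphi}(b)=[\Delta]$. If both $d_G(x)<\Delta$ and $d_G(y)<\Delta$, then $\overline{\varphi}(x)$ and $\overline{\varphi}(y)$ are nonempty subsets of $[\Delta]$. Each term of the sum is then at least $1$, contradicting the first part. Hence $\max\{d_G(x),d_G(y)\}=\Delta$.
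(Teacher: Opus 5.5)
\textbf{There is a genuine gap in the proof of the inequality; the full-deficiency corollary is fine.} Your final paragraph is correct: once the inequality is known, full deficiency gives $\overline{\varphi}(a)\cup\overline{\varphi}(b)=[\Delta]$, and $d_G(x),d_G(y)<\Delta$ would make both terms at least $1$. (The paper itself does not prove this lemma; it imports it.)

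\textbf{The colour $\eta\in\overline{\varphi}(u)$ does not exist in the case that matters.} Your argument runs on such an $\eta$. But $u\notin\{a,b\}$ keeps its full degree in $G-ab$, so $\overline{\varphi}(u)=\emptyset$ whenever $d_G(u)=\Delta$. Under full deficiency this always happens: your own computation gives $\overline{\varphi}(a)\cup\overline{\varphi}(b)=[\Delta]$, and since $\{a,b,u\}$ is $\varphi$-elementary, $\overline{\varphi}(u)=\emptyset$ (consistent with Lemma~\ref{L2.3}(1)). In every use of the lemma in the paper, $u$ is a $4$-vertex. So your ``typical move'' never gets started, in case (i) or in case (ii).

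\textbf{The fallbacks do not repair this.}
\begin{itemize}
\item $P_a(\tau,\beta)$ is the trivial chain, since $a$ misses both $\tau$ and $\beta$, so swapping it does nothing.
\item A $(\tau,\eta)$-chain from $x$ to $u$ need not avoid $b$: $b$ sees both colours and can be an interior vertex.
\item ``Always choose the swapped chain with ends disjoint from $\{a,b,u\}$, possible by the disjointness of distinct chains'' is not justified. Disjointness says two distinct chains share no vertex. It does not say the chain you need avoids $a,b,u$. In fact the useful chains are exactly those that Lemma~\ref{L2.2}(2) forces to end at $a$, $b$ or $u$.
\end{itemize}
Two smaller points. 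Since $bu$ is incident with $b$, $\beta\notin\overline{\varphi}(b)$, so $\beta\in\overline{\varphi}(a)$ automatically. Likewise $\gamma,\gamma^*\notin\overline{\varphi}(u)$, so the distinctions you promise on these points are vacuous.

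\textbf{Case (ii), the real content of the lemma, has no argument.} There each of $x,y$ shares only one colour with $\overline{\varphi}(a)\cup\overline{\varphi}(b)$. A contradiction therefore has to use the two edges $ux,uy$ together; their colours are distinct elements of $\overline{\varphi}(a)\cup\overline{\varphi}(b)$. ``Apply the previous argument to whichever of $x,y$ is not linked to $u$'' does not say how the second path enters. Moreover, the linkage you mean is via $\eta\in\overline{\varphi}(u)$, which does not exist here.

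\textbf{What is needed.} You need another way to obtain a missing colour at $u$. One option is to shift along $K$: colour $ab$ with $\beta$ and uncolour $bu$. This gives a $\Delta$-colouring of $G-bu$ in which $\beta$ is missing at $u$, and $(u,ub,b,\dots)$ is a multi-fan that picks up $ux$ or $uy$ whenever its colour lies in $\overline{\varphi}(b)$. Combine this with $(\alpha,\lambda)$-Kempe changes, $\alpha\in\overline{\varphi}(a)$, $\lambda\in\overline{\varphi}(b)$, on chains other than the $a$--$b$ chain. Such swaps leave $\overline{\varphi}(a)$, $\overline{\varphi}(b)$ and $\varphi(bu)$ unchanged, so they can normalize $\varphi(ux)$ and $\varphi(uy)$. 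Throughout, every chain endpoint must be tracked explicitly.
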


	\begin{Lemma}[\cite{FS1975}] \label{L2.5}
		Let $G$ be a $\Delta$-critical graph of order $n$. Then $n_{\Delta} \ge \frac{2n}{\Delta}$.
	\end{Lemma}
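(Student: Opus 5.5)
The plan is a double count of the edges joining $\Delta$-vertices to vertices of smaller degree. The only tool needed is Vizing's Adjacency Lemma (Lemma \ref{L2.1}), which applies to every edge because every edge of a $\Delta$-critical graph is critical.

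Let $L=V(G)\setminus\{v: d_G(v)=\Delta\}$, so $|L|=n-n_\Delta$, and let $m$ denote the number of edges with one end in $L$ and the other end a $\Delta$-vertex. We may assume $G$ is connected with at least one edge, since a critical graph of class $2$ is.

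\emph{Lower bound on $m$.} Take $y\in L$ and any edge $xy$. Applying Lemma \ref{L2.1} with the roles chosen so that the counted endpoint is $y$, we get that $y$ has at least $\max\{\Delta-d_G(x)+1,2\}\ge 2$ $\Delta$-neighbors. Hence every vertex of $L$ contributes at least two edges to $m$, and $m\ge 2(n-n_\Delta)$.

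\emph{Upper bound on $m$.} Let $w$ be a $\Delta$-vertex with at least one neighbor in $L$, and let $y\in L$ be such a neighbor of degree $k\le \Delta-1$. Applying Lemma \ref{L2.1} to the critical edge $wy$, with $w$ as the counted endpoint, shows that $w$ has at least $\Delta-k+1\ge 2$ $\Delta$-neighbors. So $w$ has at most $\Delta-2$ neighbors in $L$. This also holds trivially when $w$ has no neighbor in $L$, so $m\le (\Delta-2)n_\Delta$.

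\emph{Conclusion.} Combining the two bounds gives $2(n-n_\Delta)\le(\Delta-2)n_\Delta$, that is, $2n\le \Delta n_\Delta$, which is the claimed inequality $n_\Delta\ge \frac{2n}{\Delta}$.

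There is no real obstacle here. The only point needing care is to apply Lemma \ref{L2.1} with the correct orientation of the edge each time: once counting the $\Delta$-neighbors of the low-degree endpoint, and once counting those of the $\Delta$-endpoint.
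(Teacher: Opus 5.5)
Your proof is correct. The paper gives no proof of this lemma (it is quoted from the literature), and your double count is the standard derivation of $n_\Delta\ge 2n/\Delta$: you count the edges between the $\Delta$-vertices and the rest, using Vizing's Adjacency Lemma (Lemma~\ref{L2.1}) to get at least two such edges at every vertex of degree less than $\Delta$ and at most $\Delta-2$ at every $\Delta$-vertex. One small simplification: in the upper bound you need not go through a particular low-degree neighbor $y$ of $w$, since the $\max\{\cdot,2\}$ in Lemma~\ref{L2.1} already gives every vertex at least two $\Delta$-neighbors.
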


        \begin{Lemma}[\cite{BS1997(1)}] \label{L2.8}
	The smallest $4$-critical graph of even order has  18 vertices.
\end{Lemma}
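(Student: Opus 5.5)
Since this is quoted from Brinkmann and Steffen \cite{BS1997(1)}, the natural plan follows their two-part approach. First, show that no $4$-critical graph has even order $n\le 16$. Second, exhibit a $4$-critical graph of order $18$.

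\textbf{Reduction for the lower bound.} Let $G$ be $4$-critical of even order $n$. Then $|E(G)|\le 2n = 4\cdot\frac n2=\D\lfloor n/2\rfloor$, so $G$ is not overfull. Hence for even order we are always looking for a \emph{non-overfull} $4$-critical graph. I would then collect the local constraints such a graph must satisfy.
\begin{itemize}
\item By Lemma \ref{L2.1} and Proposition \ref{P2.3}, $\delta(G)\ge 2$, and the neighbourhoods of $2$- and $3$-vertices are tightly controlled.
\item By Lemma \ref{L2.3}, the second neighbourhood of any full-deficiency pair, such as a $2$-vertex with a $4$-neighbour, is controlled.
\item By Lemma \ref{L2.5}, $n_4\ge n/2$.
\item Since $G$ is Class $2$, it is not $4$-regular with a perfect matching structure giving a $4$-edge-colouring. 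In particular, $G$ has no $1$-factorisation extension.
\end{itemize}
Together these give an upper bound on the number of low-degree vertices, namely $2|E|=4n-\sum_v(4-d(v))$ with only a few deficient vertices. They also bound the number of edges from below by roughly $\frac{7n}{4}$.

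\textbf{Exhaustive search for $n\le 16$.} With these constraints, I would run an orderly generation, in the style of canonical augmentation, over all connected graphs with $\D=4$ satisfying the degree constraints above on $n\in\{4,6,\dots,16\}$ vertices. For each candidate, I would test $\chi'=5$ by backtracking $4$-edge-colouring, then test criticality edge by edge. The pruning is essential. Vizing's Adjacency Lemma, applied to partially generated graphs, lets us discard any graph in which a vertex cannot eventually receive enough $4$-neighbours. This is the heavy step and the main obstacle, since a purely theoretical argument seems out of reach. The computation shows that no candidate survives for these orders.

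\textbf{Existence at order $18$.} I would take the explicit graph found by Brinkmann and Steffen. A natural candidate is a construction gluing copies of the Petersen-derived graph $P^*$ (which is $3$-critical of order $9$), then adding edges to raise the maximum degree to $4$. For this graph I would verify three things: that $\D=4$; that it has no $4$-edge-colouring, via a parity or Kempe-chain argument inherited from the Petersen graph, or by computation; and that deleting any single edge yields a $4$-edge-colourable graph, by exhibiting colourings up to automorphism.
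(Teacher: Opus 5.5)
\textbf{Gap in the proposal.} The paper gives no proof of this lemma; it imports it from Brinkmann and Steffen, whose result rests on a computer enumeration. Your plan has the same shape, exhaustive search plus an explicit example, but as written it supplies no certificate. The lower bound is handed to a search whose outcome you only assert (``the computation shows that no candidate survives''). The upper bound is handed to a graph you never specify.

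The witness you sketch would not work as described. A graph built from copies of $P^*$ inherits only the need for $4$ colours, which is automatic once $\Delta=4$. So adding edges to reach $\Delta=4$ carries over no obstruction to $4$-edge-colouring. Moreover, a graph of even order $n$ with $\Delta=4$ has at most $4\cdot\frac n2$ edges, so it is never overfull. Class~$2$ must therefore be verified for a concrete graph, by exhaustive search or a genuine structural argument. After that, every single edge-deletion must still be shown to be $4$-colourable.

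\textbf{The number itself.} More seriously, check the value before trying to certify it. To my knowledge, $18$ is the value usually quoted for $\Delta=3$ (the smallest $3$-critical graphs of even order). For $\Delta=4$, a $4$-critical graph on $16$ vertices is known, due to Chetwynd. The Brinkmann--Steffen computations exclude $4$-critical graphs of even order only up to $14$. If that is right, your step ``no candidate survives for $n=16$'' is false, and the lemma as printed misquotes its source; the bound should be $16$. Please verify this against the original paper.

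\textbf{What the paper actually needs.} This does not hurt the paper. It uses the lemma only to exclude $4$-critical graphs of even order $n\le 10$, and that follows from the Chetwynd--Yap result quoted in the introduction, with no computation.
\begin{itemize}
\item Such a $G$ has $\Delta=4>n/3$, so it contains an overfull subgraph $H$ with $\Delta(H)=4$.
\item Then $\chi'(H)=5=\chi'(G)$, so criticality forces $H=G$.
\item This is impossible, because $|E(G)|\le 4\cdot\frac n2=4\lfloor n/2\rfloor$ for even $n$.
\end{itemize}
That weaker statement is what you should prove or cite here.
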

	
	\begin{Lemma}[\cite{Yap}] \label{L2.6}
		There does not exist a $\Delta$-critical graph with one $2$-vertex $u$, one vertex $v$ of degree less than $\D$ $(v \neq u)$, and all the other vertices being $\Delta$-vertices, where $\Delta \ge 4$.
	\end{Lemma}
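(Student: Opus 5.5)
We argue by contradiction. Suppose $G$ is $\Delta$-critical with $\Delta\ge4$, has exactly one $2$-vertex $u$ and exactly one further vertex $v\neq u$ with $d_G(v)<\Delta$, and all other vertices are $\Delta$-vertices. Write $N_G(u)=\{w_1,w_2\}$.

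\textbf{Local structure.} Lemma \ref{L2.1} applied to the edge $uw_i$ says that $w_i$ has at least $\Delta-1$ $\Delta$-neighbours. Since $u$ is one of its $\Delta$ neighbours, all other neighbours of $w_i$ are $\Delta$-vertices. In particular $d_G(w_i)=\Delta$ and $v\notin N_G(w_i)$. Also $v\notin N_G(u)$: applying Lemma \ref{L2.1} to $uv$ would force $u$ to have two $\Delta$-neighbours, but $u$ would then have only one. So $(u,w_1)$ is a full-deficiency pair. By Lemma \ref{L2.3}, $v$ is at distance at least $2$ from $\{u,w_1\}$, and if the distance is exactly $2$ then $d_G(v)=\Delta-1$.

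\textbf{Parity reduction.} Fix $\varphi\in\mathcal{C}^{\Delta}(G-uw_1)$. Then $|\overline{\varphi}(u)|=\Delta-1$ and $|\overline{\varphi}(w_1)|=1$, say $\overline{\varphi}(w_1)=\{\beta\}$. By Lemma \ref{L2.2}(1) we have $\beta\notin\overline{\varphi}(u)$, so $\overline{\varphi}(u)\cup\{\beta\}=[\Delta]$. Moreover $\overline{\varphi}(v)$ is nonempty and every other vertex misses no colour. Each colour class is a matching, so for every colour the number of vertices missing it is congruent to $n$ modulo $2$.

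If $n$ is even, each $\alpha\in\overline{\varphi}(u)$ must also be missing at $v$. Hence $|\overline{\varphi}(v)|\ge\Delta-1$, so $d_G(v)\le1$, which is impossible in a critical graph. Therefore $n$ is odd. Then no $\alpha\in\overline{\varphi}(u)$ is missing at $v$, so $\overline{\varphi}(v)=\{\beta\}$ and $d_G(v)=\Delta-1$. Counting degrees then gives $\Delta$ odd, hence $\Delta\ge5$. This settles everything except the case $n$ odd, $\Delta$ odd, $d_G(v)=\Delta-1$, where $G$ is overfull. That case needs a genuine recolouring argument.

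\textbf{The main step: deriving a contradiction in the overfull case.} I would transfer the deficiency to a third vertex.
\begin{itemize}
\item For each $\alpha\in\overline{\varphi}(u)$, the $(\alpha,\beta)$-chains have exactly three endpoints: $u$, $w_1$ and $v$. Lemma \ref{L2.2}(2) says $P_u(\alpha,\beta)$ ends at $w_1$, so $P_v(\alpha,\beta)$ is a path ending at some vertex $z_\alpha$ where $\alpha$ is present.
\item Since every vertex other than $u,w_1,v$ is full, $P_v(\alpha,\beta)$ is a path with both ends at $v$. The contradiction should come from an odd cycle or a degree obstruction along this path.
\item Concretely, I would choose a $\beta$-edge $vz$ and recolour $G-vz$ instead, extending a colouring of $G-uw_1$ by an $(\alpha,\beta)$-swap on $P_v(\alpha,\beta)$ followed by giving $uw_1$ colour $\beta$. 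In that colouring $v$ misses two colours, $z$ misses one and $u$ misses $\Delta-2$.
\item I would then build a multi-fan at $v$ with respect to $vz$. Using Lemma \ref{L2.2}(1) together with the fact that only $u$, $v$ and $z$ have missing colours, the fan can only reach $\Delta$-vertices. I expect to grow it until $|\overline{\varphi}(V(F))|>\Delta$, contradicting elementarity.
\end{itemize}
The alternative is a Kierstead path / short-branch configuration $a=v$, $b=z$, applying Lemma \ref{L2.4} to get two missing colours at $x,y$ that collide.

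The main obstacle is guaranteeing that the multi-fan (or Kierstead path) at $v$ really keeps growing, i.e.\ that the edges it needs carry colours missing at earlier vertices. I would try first to use the fact that $u$ is far from $v$ and misses $\Delta-2\ge3$ colours, each linked to $z$ by Lemma \ref{L2.2}(2), to force the required edge colours along the fan.
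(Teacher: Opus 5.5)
Your setup is sound: both neighbours of $u$ are $\Delta$-vertices, $\overline{\varphi}(u)$ and $\overline{\varphi}(w_1)=\{\beta\}$ partition $[\Delta]$, only $u,w_1,v$ miss colours, and each colour is missed by a number of vertices congruent to $n$ modulo $2$. The even case is handled correctly.

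The error is in the odd case. You apply the parity count only to the colours $\alpha\in\overline{\varphi}(u)$ and conclude $\overline{\varphi}(v)=\{\beta\}$. But the same count applies to $\beta$. The vertices missing $\beta$ are $w_1$ and possibly $v$ (not $u$, by elementarity), so for odd $n$ also $\beta\notin\overline{\varphi}(v)$. Hence $\overline{\varphi}(v)=\emptyset$, i.e.\ $d_G(v)=\Delta$, contradicting $d_G(v)<\Delta$. That proves the lemma with no recolouring at all.

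Your degree count is also wrong. If $n$ is odd and $d_G(v)=\Delta-1$, the degree sum $2+(\Delta-1)+(n-2)\Delta=1+(n-1)\Delta$ is odd for every $\Delta$. So the ``overfull case'' you isolate does not exist, and it certainly does not force $\Delta$ odd.

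As a result, your ``main step'' is aimed at an empty case, and as submitted the proof is incomplete. That step is only a sketch, and its crucial point (forcing the multi-fan at $v$ to keep growing) is left open. It also contains impossible assertions: that the $(\alpha,\beta)$-chains have exactly three endpoints, and that $P_v(\alpha,\beta)$ is a path with both ends at $v$. The first of these is in fact the contradiction you were after. The path-ends of the $(\alpha,\beta)$-subgraph are exactly the vertices missing exactly one of $\alpha,\beta$, and a graph of maximum degree $2$ has an even number of degree-$1$ vertices. Equivalently, by Lemma \ref{L2.2}(2) the chain $P_u(\alpha,\beta)$ already ends at $u$ and $w_1$, so $P_v(\alpha,\beta)$ would have no admissible second endpoint.

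The paper only cites this lemma from Yap without giving a proof. Your parity argument, completed by the one-line observation for $\beta$, is a full self-contained proof.
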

	
    \begin {Lemma}[\cite{Yap}] \label{L2.7}
	There does not exist a $4$-critical graph with exactly one $2$-vertex, two $3$-vertices, and all the other vertices being $4$-vertices.
    \end{Lemma}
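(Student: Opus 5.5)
The plan is to argue by contradiction. Suppose $G$ is $4$-critical with exactly one $2$-vertex $u$, two $3$-vertices $x,y$, and all other vertices of degree $4$. By Proposition~\ref{P2.3}(1), the two neighbors $v,w$ of $u$ are $4$-vertices, and all their other neighbors are $4$-vertices. Since $d_G(u)+d_G(v)=6=\D+2$, the pair $(u,v)$ is a full-deficiency pair, and so is $(u,w)$. Lemma~\ref{L2.3} then gives that every vertex in $N_G(v)\cup N_G(w)$ other than $u$ has degree $4$. It also gives that vertices at distance $2$ from $\{u,v\}$ (or from $\{u,w\}$) have degree at least $3$, which is automatic here. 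So $x,y$ lie at distance at least $2$ from $\{u,v\}$ and from $\{u,w\}$.

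\textbf{Step 1 (parity).} Fix $\phiv\in\CC^4(G-uv)$. Then $u$ misses three colors $\alpha_1,\alpha_2,\alpha_3$, $v$ misses one color $\beta$, and $x,y$ miss one color each; no other vertex misses a color. By Lemma~\ref{L2.2}(1) applied to the multi-fan $(u,uv,v)$, the set $\{u,v\}$ is elementary, so $\{\alpha_1,\alpha_2,\alpha_3,\beta\}=[4]$. For each color $c$, the number of vertices missing $c$ is congruent to $n$ modulo $2$. If $n$ is even, each of the four colors must be missed an even number of times, which would need at least $8$ missing slots; but only $6$ exist. Hence $n$ is odd (consistent with Lemma~\ref{L2.8}). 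Then each color is missed an odd number of times, which forces one of two configurations:
\begin{itemize}
\item[(A)] $\phiv(x)$ and $\phiv(y)$ both miss $\beta$;
\item[(B)] $x$ and $y$ both miss the same $\alpha_i$.
\end{itemize}

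\textbf{Step 2 (Kempe chains).} By Lemma~\ref{L2.2}(2), for each $i$ the $(\alpha_i,\beta)$-chain starting at $u$ ends at $v$. In configuration~(A), the $(\alpha_i,\beta)$-chains starting at $x$ and at $y$ must end at each other, because the only remaining vertices missing $\alpha_i$ or $\beta$ are $x$ and $y$. Swapping colors on such a chain, or on a chain with another color $\gamma$ present at $x$, gives a new coloring in $\CC^4(G-uv)$. I will use these swaps to move to configuration~(B) or to change the color missing at $v$. In configuration~(B), consider the $(\alpha_i,\beta)$-chain at $x$. It can end only at $y$, since $u$ and $v$ are already the two ends of the chain through $u$. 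Now swap along a $(\beta,\gamma)$-chain passing through a neighbor of $v$. I aim to reach a coloring in which $v$ misses a color also missing at $u$; then $uv$ can be colored, contradicting that $G$ is Class~$2$.

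\textbf{Step 3 (local structure).} To control where these chains can end, I will use Lemma~\ref{L2.4} on short-branches $\{u,v,z,x',y'\}$, where $z\in N_G(v)\setminus\{u\}$. Whenever the Kierstead-path hypotheses hold for two neighbors $x',y'$ of $z$, the lemma gives $\max\{d(x'),d(y')\}=4$. This rules out $z$ being adjacent to both $x$ and $y$ under such colorings, and similarly forbids $x$ or $y$ from receiving colors missing at $u$ or $v$ along such paths. Combined with the distance information from Lemma~\ref{L2.3}, this restricts the possible positions of $x$ and $y$ relative to $u,v,w$. That restriction is what lets the swaps in Step~2 avoid disturbing $u$ and $v$.

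\textbf{Main obstacle.} I expect the hardest part to be the Kempe-chain analysis in Step~2, namely tracking where chains end after a sequence of swaps. This is precisely where the earlier proof of Pi has gaps. My first attempt will use $(\alpha_i,\beta)$–$(\beta,\gamma)$ double swaps started at $x$. I will use the parity constraint of Step~1 after every swap to pin down the endpoints of each chain, and reduce to the situation excluded by Lemma~\ref{L2.6} (one $2$-vertex and one further deficient vertex) if the two deficiencies at $x$ and $y$ can be merged.
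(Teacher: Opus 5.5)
\textbf{Verdict: there is a genuine gap.} The paper gives no proof of this statement; it is quoted from Yap. So your argument has to stand on its own, and as written it only establishes preliminaries.

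\textbf{What is correct.} Step~1 is right. For $\varphi\in\mathcal{C}^4(G-uv)$, Lemma~\ref{L2.2}(1) gives $\overline{\varphi}(u)\cup\overline{\varphi}(v)=[4]$. Every color class is a matching, so counting the six missing slots forces $n$ odd and $\overline{\varphi}(x)=\overline{\varphi}(y)$.

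\textbf{Why Step~1 is not enough.} This only describes what the colorings of $G-uv$ look like, and it is fully consistent with $G$ being Class~$2$. Indeed $|E(G)|=2n-2=4\lfloor n/2\rfloor$, so $G$ sits exactly at the overfull threshold and this information alone cannot give a contradiction. Steps~2 and~3 are statements of intent. Your stated target, a coloring in which $v$ misses a color also missing at $u$, is just the negation of Lemma~\ref{L2.2}(1), i.e.\ a restatement of what must be proved. No contradiction is ever derived.

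\textbf{Why the proposed Step~2 mechanism cannot work.} In any $\varphi\in\mathcal{C}^4(G-uv)$ the only colored edge at $u$ is $uw$, and it has color $\beta$. So $u$ is a trivial component of every $(\alpha_i,\alpha_j)$-subgraph. By Lemma~\ref{L2.2}(2), $u$ and $v$ are $(\alpha_i,\beta)$-linked for every $i$. Since $u,v,x,y$ are the only vertices missing colors, every Kempe chain is one of the following: a cycle, a $u$--$v$ path, an $x$--$y$ path, or $\{u\}$. Hence every swap you list (single swaps at $x$, the $(\alpha_i,\beta)$--$(\beta,\gamma)$ double swaps, swaps through neighbors of $v$) does only one of three things:
\begin{itemize}
\item it changes the common missing color of $x$ and $y$, moving between configurations (A) and (B);
\item it interchanges $\alpha_i$ and $\beta$ between $u$ and $v$;
\item it leaves all missing sets unchanged.
\end{itemize}
Throughout, $\{u,v\}$ stays elementary. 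So the chain-endpoint tracking you flag as the main obstacle is actually trivial here and yields nothing.

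\textbf{Step~3 and the reduction to Lemma~\ref{L2.6}.} Step~3 yields only one local fact: no $z\in N_G(v)\setminus\{u\}$ is adjacent to both $x$ and $y$. The Kierstead conditions are automatic here, since $\overline{\varphi}(u)\cup\overline{\varphi}(v)=[4]$. Your further claim that Lemma~\ref{L2.4} keeps $x$ or $y$ from missing colors of $u$ or $v$ is not what the lemma says; a single $3$-vertex at the end of such a path is allowed. It also cannot hold, given Step~1. The proposed reduction to Lemma~\ref{L2.6} by ``merging'' $x$ and $y$ names no operation that preserves $4$-criticality. For example, adding $xy$ when it is a non-edge gives an overfull graph with a single minor vertex, to which Lemma~\ref{L2.6} does not apply.

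\textbf{What is missing.} You need an argument that leaves $\mathcal{C}^4(G-uv)$. One option is to shift the uncolored edge: color $uv$, uncolor another edge, and apply Lemma~\ref{L2.2} to the new critical edge, as done in Section~4. Another is to work with colorings of $G-e$ for edges $e$ at $x$ or $y$, where your parity count is genuinely informative. For instance, uncoloring $xy$ would leave the two colors missing at $u$ each missing exactly twice, so $xy\notin E(G)$. Without such an argument, you should cite Yap, as the paper does.
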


Mainly using Proposition \ref{P2.3} and Lemmas \ref{L2.3}-\ref{L2.4}, we provide a new proof of the following proposition for $4$-critical graphs whose proof in \cite{Pi} also contains several analogous gaps.

\begin{Proposition}\label{P2.10}
A $4$-critical graph of order $n$ contains at most one $2$-vertex, where $n \in \{5,7,9,11\}$.
\end{Proposition}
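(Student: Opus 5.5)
We argue by contradiction. Let $G$ be a $4$-critical graph of order $n\in\{5,7,9,11\}$ with two distinct $2$-vertices $u$ and $v$. By Proposition~\ref{P2.3}(1), each of $u,v$ has exactly two $4$-neighbors, say $N_G(u)=\{u_1,u_2\}$ and $N_G(v)=\{v_1,v_2\}$. All other neighbors of $u_1,u_2,v_1,v_2$ are $4$-vertices, and $\{u_1,u_2\}\cap\{v_1,v_2\}=\emptyset$. Each pair $(u,u_i)$ and $(v,v_i)$ is a full-deficiency pair, since $2+4=\Delta+2$, and every edge of $G$ is critical. Hence Lemma~\ref{L2.3} applies to each of these pairs:
\begin{itemize}
\item by part (1), every vertex of $(N_G(u)\cup N_G(u_i))\setminus\{u,u_i\}$ is a $4$-vertex;
\item by part (2), every vertex at distance $2$ from $\{u,u_i\}$ has degree at least $3$.
\end{itemize}
The first step is to use this to bound $dist_G(u,v)$. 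The vertex $v$ is not adjacent to $u$, since otherwise $u$ would have a $2$-neighbor. So $v$ is not a neighbor of $u_1$ or $u_2$, and $v$ is not at distance $2$ from $\{u,u_1\}$ or $\{u,u_2\}$. Hence $dist_G(v,\{u,u_1,u_2\})\ge 3$, and symmetrically for $u$ with respect to $\{v,v_1,v_2\}$.

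Next we count vertices. Let $A=N_G(u_1)\cup N_G(u_2)\setminus\{u\}$. By Proposition~\ref{P2.3}(1), every vertex of $A$ is a $4$-vertex, and $|A|\ge 3$ (it could be as small as $3$ if $u_1u_2\in E(G)$ and they share neighbors). Every vertex $w\in A$ has the property that all vertices at distance $\le 1$ from $w$ lie within distance $2$ of $\{u,u_i\}$ and so have degree $\ge 3$. Also, $v,v_1,v_2$ are outside $\{u,u_1,u_2\}\cup A$. Since $v_1,v_2$ have $4$ neighbors each, all of degree $4$ apart from $v$, the set $B=N(v_1)\cup N(v_2)\setminus\{v\}$ also has at least $3$ vertices. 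The aim is to show that $\{u,u_1,u_2\}\cup A$ and $\{v,v_1,v_2\}\cup B$ are large and overlap only a little, which forces $n\ge 12$ or forces a parity or degree-sum contradiction.

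For the parity step, use $\sum d = 2|E|$ together with $n$ odd. For odd $n\le 11$, Lemma~\ref{L2.5} gives $n_4\ge n/2$. For $n=5$ or $7$, the local structure around $u$ alone already needs $\{u,u_1,u_2\}$, at least $3$ vertices of $A$, and then $v\notin$ these, while $v_1,v_2\notin\{u,u_1,u_2\}$. This gives a quick contradiction, because small $4$-critical graphs of order $5,7$ are known (e.g.\ $K_5$ minus an edge type), or it follows directly from the count $n\ge 3+3+3=9$ once overlaps between $A$ and $\{v_1,v_2\}$ are controlled.

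The main work is $n=9$ and $n=11$. Here $A$ and $B$ may intersect, and $v_1,v_2$ may lie in $A$ only if they are at distance $2$ from $u$, which is allowed. So we perform a case analysis on $|A\cap\{v_1,v_2\}|$ and on adjacency between $u_1,u_2$. In each case we take a coloring $\varphi\in\CC^4(G-uu_1)$ and form the Kierstead paths $(u,uu_1,u_1,u_1w,w,\dots)$ through a neighbor $w$ of $u_1$ with $\varphi(u_1w)\in\overline{\varphi}(u)$. We then apply Lemma~\ref{L2.4} to the short-branch rooted at $w$ with leaves being two further neighbors of $w$. This forces one of those leaves to be a $4$-vertex and restricts the missing colors. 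Applied with $w=v_1$ (when $v_1\in A$) and leaves including $v$, this should show that $v$ cannot occupy that position, since a $2$-vertex misses two colors, and these would intersect $\overline{\varphi}(u)\cup\overline{\varphi}(u_1)$ too much. I expect the hard step to be handling the configuration in which $v_1$ and $v_2$ are both common neighbors of $u_1$ and $u_2$ at distance exactly $2$ from $u$. There, I would use Lemma~\ref{L2.2} on the multi-fan at $u_1$, together with Kempe-chain linkage $(\alpha,\beta)$ between $u$ and the fan vertices, to track where the chains through $v$ end, and then derive a recoloring of $uu_1$. The remaining leftover cases, in which all other vertices have degree $4$ or only one further small-degree vertex exists, are excluded by Lemmas~\ref{L2.6} and~\ref{L2.7}, after the degree-sum parity count has reduced the possible degree sequences.
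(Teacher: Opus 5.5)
Your first step is correct, and it is exactly the use of Lemma \ref{L2.3}(2) the argument needs: $dist_G(v,\{u,u_1,u_2\})\ge 3$. You then contradict it. If $v_1\in A$, say $v_1u_1\in E(G)$, then $dist_G(v,\{u,u_1\})=2$, which Lemma \ref{L2.3}(2) forbids because $d_G(v)=2<3$. Hence there are no edges between $\{u_1,u_2\}$ and $\{v_1,v_2\}$. The configuration you single out as the hard step ($v_1,v_2$ common neighbors of $u_1,u_2$, and a short-branch rooted at $v_1$ with leaf $v$) therefore never occurs. This also settles $n\in\{5,7\}$ cleanly, with no appeal to known small critical graphs: $\{u,u_1,u_2\}$, $\{v,v_1,v_2\}$ and the neighbors of $u_1$ outside $\{u,u_2\}$ are pairwise disjoint, so $n\ge 8$. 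So your planned case analysis for $n\in\{9,11\}$ targets an empty case, and the case that actually survives is never identified.

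What is missing is the degree-sequence reduction, which involves the $3$-vertices your plan never mentions.
\begin{itemize}
\item By Proposition \ref{P2.3}(1), $n_4\ge 2n_2$, so $n_2\le 3$.
\item If $n_2=2$ and $n_3=0$, Lemma \ref{L2.6} applies, with the second $2$-vertex as the vertex of degree $<\Delta$.
\item Otherwise $n_3\ge 2$ by parity. Each $3$-vertex has at least two $4$-neighbors, none of them adjacent to a $2$-vertex, so $n_4\ge 2n_2+2$.
\end{itemize}
With $n\le 11$ this leaves only two cases. The first is $n_2=3$, $n_3=0$. It is killed by your distance argument: each of the six neighbors of the $2$-vertices can be adjacent only to its own $2$-vertex, its partner, and the $n-9\le 2$ remaining vertices. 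This gives degree at most $2$ when $n=9$, and forces a vertex of degree $6$ when $n=11$. The second is $n=11$ with $(n_2,n_3,n_4)=(2,2,7)$. Lemma \ref{L2.7}, which concerns a single $2$-vertex, plays no role.

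In the latter case, let $w_1,w_2$ be the $3$-vertices and $x_1,x_2,x_3$ the $4$-vertices outside $N_G(u)\cup N_G(v)$. All $4$-neighbors of $w_1,w_2$ lie in $\{x_1,x_2,x_3\}$, and each of $u_1,u_2,v_1,v_2$ has at least two neighbors there. For $\varphi\in\CC^4(G-uu_1)$ we have $\overline{\varphi}(u)\cup\overline{\varphi}(u_1)=[4]$, so every path $u,u_1,x_j,w_k$ is a Kierstead path. Lemma \ref{L2.4} then forbids $u_1$ (and likewise $u_2,v_1,v_2$) from being adjacent to any $x_j$ that is adjacent to both $w_1$ and $w_2$. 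Splitting on whether $w_1w_2\in E(G)$ gives a degree contradiction at some $u_i$ or $x_j$. So the short-branch lemma is the right tool, but the leaves of the short-branch must be the two $3$-vertices, not $v$.
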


\begin{proof}
	Suppose to the contrary that $G$ is a $4$-critical graph of order $n$ with $n_2 \ge 2$, where $n \in \{5,7,9,11\}$. Since $2n_2 \le n_4 \le n-n_2$ by Proposition \ref{P2.3}(1), we get $n_2 \le n/3$. Thus $2 \le n_2 \le 3$. Notice that $\delta(G) \ge 2$ as $G$ is $4$-critical.
	
	Claim: If $n_2=3$, then $n_3\neq 0$.
	
	\begin{proof}
		Suppose  that $n_2=3$ and $n_3= 0$ in $G$.
		By Proposition \ref{P2.3}(1), $n_4 \ge 2n_2=6$, which implies $n \ge 9$.
		Let $v_1, v_2, v_3$ be the three $2$-vertices, and $v_4, \dots, v_n$ be the $4$-vertices of $G$.
		By Proposition \ref{P2.3}(1), let $N_G(v_1)=\{v_4,v_5\}$, $N_G(v_2)=\{v_6,v_7\}$, and $N_G(v_3)=\{v_8,v_9\}$.
       Since $d_G(v_1)+d_G(v_4)=\Delta(G)+2$,
      $(v_1,v_4)$ is a full-deficiency pair.
		By Lemma \ref{L2.3}(2), $v_4$ is not adjacent to
		any vertex among $\{v_6,v_7,v_8,v_9\}$ since $d_G(v_2)=d_G(v_3)=2 < \Delta(G)-1$. When $n=9$, $v_4$ has at most two neighbors $v_1,v_5$, contradicting the fact that $d_G(v_4)=4$.
		When $n=11$,  $v_4$ is adjacent to both $v_{10}$ and $v_{11}$ since $d_G(v_4)=4$. Similarly, each vertex in $\{v_5,\dots,v_9\}$ is adjacent to both $v_{10}$ and $v_{11}$. Hence, $v_{10}$ has at least six neighbors $v_4,\dots,v_9$,
        %$v_4,v_5,v_6,v_7,v_8,v_9$,
		contradicting the fact that $d_G(v_{10})=4$.
	\end{proof}
	
	If $n_2=3$, then $n_4 \ge 2n_2= 6$ by Proposition \ref{P2.3}(1). The claim above implies $n_3 \neq 0$, and since the sum of degrees is even, $n_3$ is even. Since $h \in\{5,7,9,11\}$, we have $n_3 = 2$ and $n_4=6$. By Proposition \ref{P2.3}, counting the minimum  required number of $4$-neighbors of the $2$-vertices and the $3$-vertices, we have $n_4 \ge 2n_2 +2 =8$, contradicting  that $n_4 = 6$.
	
	If $n_2=2$, then $n_4 \ge 2n_2= 4$ by Proposition \ref{P2.3}(1). Note that $n_3 \neq 0$ by Lemma \ref{L2.6} and $n_3$ is even. Thus $n_3 \ge 2$. By Proposition \ref{P2.3}, we have $n_4 \ge 2n_2 +2 =6$. Since $h \in\{5,7,9,11\}$, we have $n_3=2$ and $n_4=7$.
	Let $v_1$, $v_2$ be the two $2$-vertices, $v_3$, $v_4$ be the two $3$-vertices, and $v_5,\dots,v_{11}$ be the $4$-vertices of $G$. By Proposition \ref{P2.3}(1), let $N_G(v_1)=\{v_5,v_6\}$ and $N_G(v_2)=\{v_7,v_8\}$.
	
	First suppose that $v_3v_4 \notin E(G)$. By Proposition \ref{P2.3}(1), there is no common neighbor between $\{v_1,v_2\}$ and $\{v_3,v_4\}$. Thus  $N_G(v_3)=N_G(v_4)=\{v_9,v_{10},v_{11}\}$.
    %Since $d_G(v_1)+d_G(v_5)= \Delta(G)+2$,
    Note that $(v_1,v_5)$ is a full-deficiency pair in $G$. Therefore, $v_5$ is not adjacent to either $v_7$ or $v_8$ by Lemma \ref{L2.3}(2) since $d_G(v_2)=2 <\Delta(G)-1$. Since $d_G(v_5)=4$, $v_5$ has at least two neighbors in $\{v_9,v_{10},v_{11}\}$. Without loss of generality, assume that $v_5v_{9}, v_5v_{10} \in E(G)$. Let $\phiv\in\CC^4(G-v_1v_5)$.  Note that the subgraph $G_1$ is a short-branch with $V(G_1)=\{v_1,v_5,v_9,v_3,v_4\}$ and $E(G_1)=\{v_1v_5,v_5v_9,v_9v_3,v_9v_4\}$. Since $(v_1,v_1v_5,v_5)$ is a multi-fan, $\{v_1,v_5\}$ is $\phiv$-elementary by Lemma \ref{L2.2}(1). Then $|\overline{\varphi}(v_1)\cup \overline{\varphi}(v_5)|=|\overline{\varphi}(v_1)|+|\overline{\varphi}(v_5)|=\Delta(G)=4$, and hence $\overline{\varphi}(v_1)\cup \overline{\varphi}(v_5)=[4]$.
	Thus, in  $G_1$, $K:=(v_1,v_1v_5,v_5,v_5v_9,v_9,v_9v_3,v_3)$ and $K^*:=(v_1,v_1v_5,v_5,v_5v_9,v_9,v_9v_4,v_4)$ are two Kierstead paths. Since $(v_1,v_5)$ is a full-deficiency pair, we have $max\{d_G(v_3),d_G(v_4)\}=\Delta(G)$ by Lemma \ref{L2.4}, which contradicts the fact that $d_G(v_3)=d_G(v_4)=3<\Delta(G)$.
	
	Therefore, $v_3v_4 \in E(G)$.  Since  $(v_1,v_5)$ and  $(v_1,v_6)$ are two full-deficiency pairs and $d_G(v_2)=2< \Delta(G)-1$, there is no edge between $\{v_5,v_6\}$ and $\{v_7,v_8\}$ by Lemma \ref{L2.3}(2). By Proposition \ref{P2.3}(1), there is no common neighbor between $\{v_1,v_2\}$ and $\{v_3,v_4\}$. Thus, all $4$-neighbors of $v_3$ and $v_4$ are contained in $\{v_9,v_{10},v_{11}\}$.
	We consider the following two cases. If $v_3$ and $v_4$ have exactly one common $4$-neighbor, then let $v_9,v_{10} \in N_G(v_3)$ and $v_{10},v_{11} \in N_G(v_4)$.  We claim that $v_5v_{10}\notin E(G)$. Otherwise, $v_5v_{10}\in E(G)$ and thus the subgraph $G_2$ is a short-branch with $V(G_2)=\{v_1,v_5,v_{10},v_3,v_4\}$ and $E(G_2)=\{v_1v_5,v_5v_{10},v_{10}v_3,v_{10}v_4\}$, and the same argument as $G_1$ yields a contradiction to Lemma \ref{L2.4}. Since $d_G(v_5)=4$, $v_5$ is adjacent to both $v_9$ and $v_{11}$. The same holds for $v_6,v_7,v_8$. Therefore, $v_9$ has at least five neighbors $v_3,v_5,v_6,v_7,v_8$, contradicting the fact that $d_G(v_9)=4$.
	If $v_3$ and $v_4$ have exactly two common $4$-neighbors, then let $v_9,v_{10} \in N_G(v_3)\cap N_G(v_4)$.
We claim that $v_5v_9 \notin E(G)$; otherwise, $v_5v_9 \in E(G)$ and thus the subgraph $G_3$ is a short-branch with $V(G_3)=\{v_1,v_5,v_9,v_3,v_4\}$ and $E(G_3)=\{v_1v_5,v_5v_9,v_9v_3,v_9v_4\}$, and the same argument as $G_1$ yields a contradiction to Lemma \ref{L2.4}. Similarly, $v_5v_{10} \notin E(G)$. Hence, $v_5$ has at most three neighbors $v_1,v_6,v_{11}$, contradicting the fact that $d_G(v_5)=4$. This completes the proof.
\end{proof}

In light of the following Propositions \ref{P2.11}-\ref{P2.12} from \cite{CR2019},
%with a structural analysis based %on forbidden configurations
we obtain four new structural Propositions \ref{P2.13}-\ref{P2.16} of $4$-critical graphs, which are instrumental in proving Theorem \ref{T1}.  None of these configurations in Figure $1$ can appear in any $4$-critical graph.  The proofs of Propositions \ref{P2.13}-\ref{P2.16} will be given in the Section $4$ of this paper.

\begin{figure}[H]
	\begin{center}
		\centering
		\scalebox{0.27}{\includegraphics{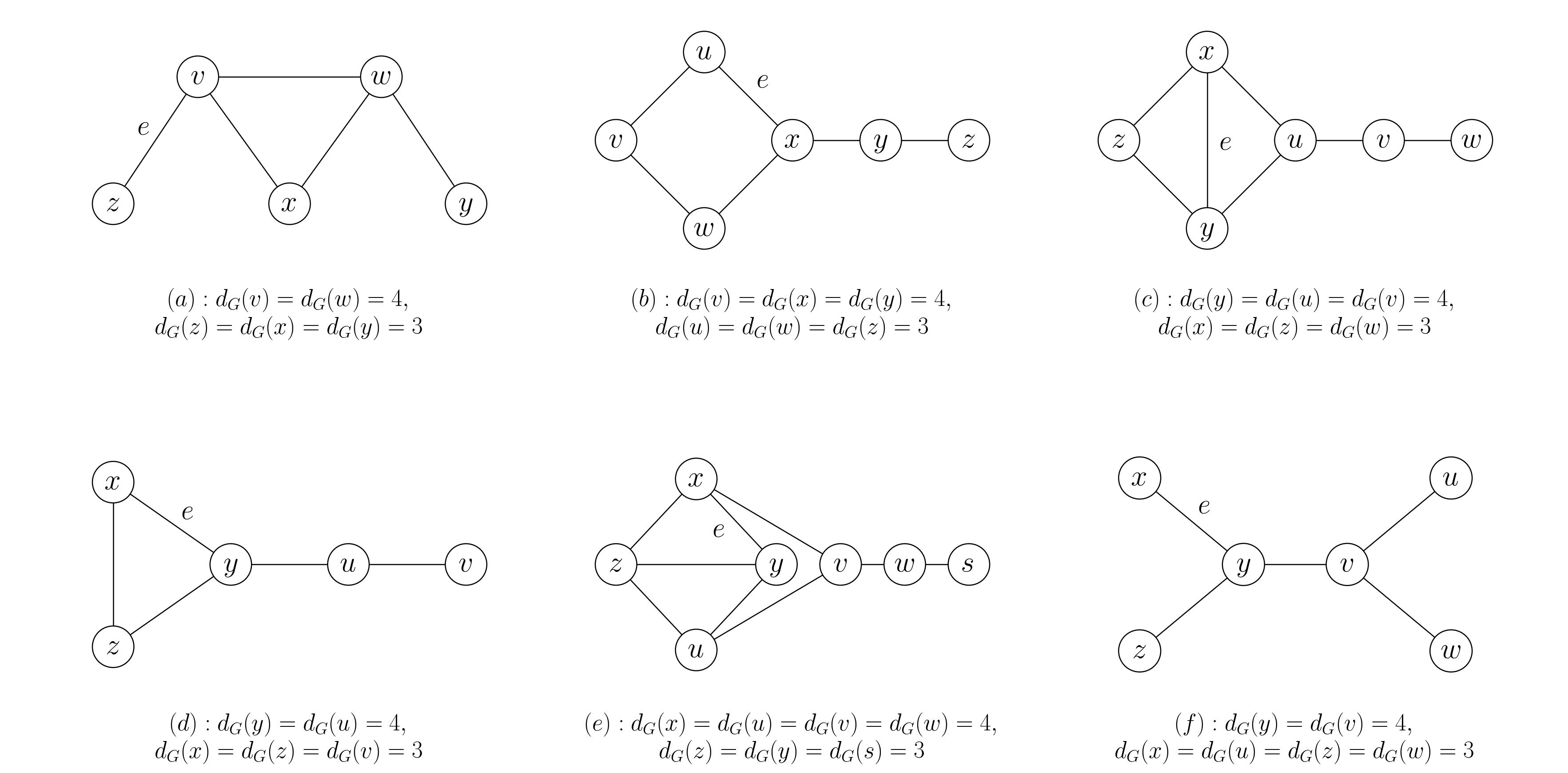}}
		\captionsetup{labelsep=none}
		\caption{}
	\end{center}
\end{figure}

\begin{Proposition}[\cite{CR2019}]\label{P2.11}
	Suppose that $G$ is a graph with $\Delta(G)=4$ and contains the configuration in $Figure$ 1(a).
	If $\chi'(G-e)=4$, then $\chi'(G)=4$.
\end{Proposition}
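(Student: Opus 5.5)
Since Figure~1(a) is only given pictorially, I will argue in terms of its essential features. I take these to be a designated edge $e=xy$ whose endpoints, or near neighbours, are low-degree vertices (a $2$-vertex or $3$-vertices), and a small set of $4$-vertices joining them. I will argue by contradiction. Suppose $\chi'(G-e)=4$ but $\chi'(G)\neq 4$. By Vizing's theorem $\chi'(G)=5$, so $G$ is Class $2$ and $e$ is a critical edge of $G$. This is exactly what is needed to apply Lemma \ref{L2.2} and Lemma \ref{L2.4} to $G$ with the critical edge $e$. Fix $\phiv\in\CC^4(G-e)$. Every attempt to extend $\phiv$ to $e$ must fail, and the aim is to show that the local structure of the configuration always allows an extension.

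\textbf{Step 1: a multi-fan at an endpoint.} Build a maximal multi-fan $F$ at the endpoint $x$ of larger degree, with respect to $e$ and $\phiv$. By Lemma \ref{L2.2}(1), $V(F)$ is $\phiv$-elementary. Because the endpoint $y$ has small degree (degree $2$ or $3$ in the configuration), $|\overline{\varphi}(y)|\ge 2$. Also $x$ misses at least one color, since $e$ is uncolored. These missing sets are disjoint and lie inside $[4]$. Counting them pins down $\overline{\varphi}(x)$ and $\overline{\varphi}(y)$ almost completely, typically up to a relabelling such as $\overline{\varphi}(x)=\{1\}$ and $\overline{\varphi}(y)=\{2,3\}$. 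It also shows which neighbours of $x$ are forced into $F$: those joined to $x$ by edges colored $2$ or $3$. Since the configuration contains further low-degree vertices adjacent to these fan vertices, elementarity of $V(F)$ with so few colors available should already give a contradiction in the simplest subcases. The reason is that such a neighbour would miss a color that is already missing elsewhere in $F$.

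\textbf{Step 2: the remaining subcases.} When the low-degree vertices sit at distance two from $e$ rather than on the fan, I will instead form Kierstead paths $K=(y,e,x,xu,u,uv,v)$ through the configuration. If the configuration contains a short-branch (an edge $ab=e$, a vertex $u$ adjacent to $b$, and two further neighbours of $u$), I will apply Lemma \ref{L2.4}. When $(x,y)$ is a full-deficiency pair this gives a $4$-vertex among those two neighbours, contrary to the degrees drawn in the figure. Otherwise the inequality $|\overline{\varphi}(v)\cap(\overline{\varphi}(x)\cup\overline{\varphi}(y))|+\cdots\le 1$ forbids both low-degree leaves from missing a color of $\overline{\varphi}(x)\cup\overline{\varphi}(y)$. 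Since they miss at least one color each out of only four, this yields the contradiction.

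\textbf{Main obstacle.} I expect the hard part to be the subcases where none of the lemmas applies directly. There I would do explicit Kempe-chain surgery. By Lemma \ref{L2.2}(2), $x$ and each $y_i$ are $(\alpha,\beta)$-linked for $\alpha\in\overline{\varphi}(x)$ and $\beta\in\overline{\varphi}(y_i)$. I would perform an $(\alpha,\beta)$-swap, or the double $(\alpha,\beta)$-$(\beta,\gamma)$-swaps, at a low-degree vertex outside $F$. The goal is either to break such a linkage or to make two fan vertices share a missing color, contradicting Lemma \ref{L2.2}(1). The delicate point is tracking the endpoints of each Kempe chain: one must check that the swapped chain does not pass through $x$ or $y$ and so change $\overline{\varphi}(x)$ or $\overline{\varphi}(y)$. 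I would handle this by splitting on which vertex of the configuration the relevant chain ends at. With $\Delta=4$ there are only a few such endpoints, so the case analysis should stay finite.
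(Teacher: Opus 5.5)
Your proposal has a genuine gap: it is a generic template rather than a proof. You never identify the configuration in Figure 1(a) or which of its edges is $e$, so none of the steps is actually carried out. The part you call the main obstacle is the entire content of the proposition. (The paper gives no proof of this statement; it imports it from \cite{CR2019}.) The applications in Section 3, e.g.\ $(x,y,z,v,w)=(v_1,v_3,v_2,v_5,v_7)$ in Subcase 1.1, show what the configuration is. In that labelling, a $3$-vertex $x$ is adjacent to two adjacent $4$-vertices $v,w$, where $v$ has a second $3$-neighbor $z$ and $w$ has a second $3$-neighbor $y$. There is no $2$-vertex. So the premise of your Step 1, that an end of $e$ has degree $2$ or $3$, need not hold: if $e=vw$, each end misses exactly one color.

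The shortcuts you hope will settle the easy subcases do not work on this configuration.
\begin{itemize}
\item In Step 1, Lemma~\ref{L2.2}(1) only constrains vertices of the multi-fan. The $3$-vertices $y,z$ enter a multi-fan only if they are neighbors of its center and the connecting edge has a color missing at an earlier fan vertex. Even then, a fully colored $3$-vertex misses exactly one color, and that color can be one not missing at either end of $e$, so counting gives no contradiction.
\item In Step 2, no edge of the configuration joins two $3$-vertices. Hence every edge $ab$ has $d_G(a)+d_G(b)\ge 7$, so there is no full-deficiency pair, and $|\overline{\varphi}(a)\cup\overline{\varphi}(b)|=10-d_G(a)-d_G(b)\le 3$. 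Your paths are therefore not automatically Kierstead paths: condition K2 must be checked edge by edge. This is unlike Proposition~\ref{P2.10} and Claim~1, where $\overline{\varphi}(a)\cup\overline{\varphi}(b)=[4]$ made K2 automatic.
\item Even when Lemma~\ref{L2.4} does apply, its bound of $1$ is compatible with both leaves missing a color. Both may miss a color outside $\overline{\varphi}(a)\cup\overline{\varphi}(b)$, so ``each leaf misses a color out of four'' gives no contradiction.
\end{itemize}
Nothing is proved until the Kempe-chain analysis is written out explicitly, in the style of the proofs of Propositions~\ref{P2.13}--\ref{P2.16}. That means normalizing the missing colors at the ends of $e$ and forcing the colors on the configuration edges via multi-fan elementarity and swaps along chains known to be linked. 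Then split on the remaining edge colors and on the missing colors at $x,y,z$. Each branch must close either with a violation of Lemma~\ref{L2.2} or with an explicit $4$-edge-coloring of $G$.
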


\begin{Proposition}[\cite{CR2019}]\label{P2.12}
	Suppose that $G$ is a graph with $\Delta(G)=4$ and contains the configuration in $Figure$ 1(b).
	If $\chi'(G-e)=4$, then $\chi'(G)=4$.
\end{Proposition}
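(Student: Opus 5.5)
Since the configuration of Figure 1(b) is taken from \cite{CR2019}, I would follow the same recoloring strategy used for Proposition \ref{P2.11}. Write $e=xy$ and fix $\phiv\in\CC^4(G-e)$. I will argue by contradiction: assume $\chi'(G)=5$. Passing to a $4$-critical subgraph that still contains $e$, we may assume $e$ is a critical edge of a Class $2$ graph. Then Lemma \ref{L2.2} and Lemma \ref{L2.4} are available for $\phiv$.

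\textbf{Setting up the missing colors.} Because $x,y$ (or at least one of them) have degree below $4$ in the configuration, $\overline{\varphi}(x)$ and $\overline{\varphi}(y)$ are nonempty. By Lemma \ref{L2.2}(1) applied to the multi-fan $(x,e,y)$, these sets are disjoint. I will then grow a multi-fan at $x$ and, where the configuration has a path of low-degree vertices leaving $y$, a Kierstead path $(x,e,y,\dots)$ through the remaining vertices of Figure 1(b). Every vertex in the figure is then forced into a small number of color-type cases, indexed by which colors of $\overline{\varphi}(x)\cup\overline{\varphi}(y)$ appear on its incident edges. Elementarity from Lemma \ref{L2.2}(1) and the counting bound from Lemma \ref{L2.4} cut these cases down to a handful. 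With only four colors, each case is essentially a fixed labelling of the edges of the configuration.

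\textbf{Recoloring in each case.} For each remaining labelling, the goal is an explicit sequence of Kempe changes, or $(\alpha,\beta)$--$(\beta,\gamma)$-swaps, that makes some color missing at both $x$ and $y$, so that $e$ can be colored. The main tool is Lemma \ref{L2.2}(2): for $\alpha\in\overline{\varphi}(x)$ and $\beta\in\overline{\varphi}(y_i)$, the vertices $x$ and $y_i$ are $(\alpha,\beta)$-linked. Hence, for any other vertex $z$ of the configuration missing $\alpha$ or $\beta$, the chain $P_z(\alpha,\beta)$ is disjoint from $P_x(\alpha,\beta)$. Swapping along it changes $\overline{\varphi}(z)$ without disturbing $x$ or $y$. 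After such a swap, either the fan or path becomes non-elementary, contradicting Lemma \ref{L2.2}(1), or Lemma \ref{L2.4} is violated.

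\textbf{Main obstacle.} The hardest part will be the cases where a single swap does not suffice. There, one must track where a chain such as $P_y(\alpha,\beta)$ ends after a preliminary swap: whether it meets a given vertex before another, or leaves the configuration entirely. This is exactly the kind of endpoint argument the authors flag as error-prone in \cite{Pi}. For these cases I would first try to fix the first swap so that the linked pair is forced by Lemma \ref{L2.2}(2) or (3), which pins down the chain's endpoints. Only then would I perform the second swap, checking explicitly that it cannot pass through $x$ or $y$. Each subcase should end either with $e$ colorable in $4$ colors or with a contradiction to Lemma \ref{L2.2}. Either way, $\chi'(G)=4$.
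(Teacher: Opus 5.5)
Your proposal has a genuine gap: it describes a toolbox rather than an argument, and what it leaves open is the entire content of the proposition. You never identify the configuration of Figure~1(b): which edge is $e$, which vertices are $3$- or $4$-vertices, and which adjacencies are present. You never list the possible colorings of its edges, and you never exhibit a recoloring. Your pivotal claim, that after a swap ``either the fan or path becomes non-elementary \dots\ or Lemma~\ref{L2.4} is violated'', is exactly what has to be checked case by case, and it is not a general principle. A Kempe change along $P_z(\alpha,\beta)\neq P_x(\alpha,\beta)$ spares the linked pair, but it may recolor other edges of the configuration, so typically it only moves you into another case.

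The paper does not reprove Proposition~\ref{P2.12}; it imports it from \cite{CR2019}, as it does Proposition~\ref{P2.11}, so there is no in-text strategy for Proposition~\ref{P2.11} to copy. Its proofs of Propositions~\ref{P2.13}--\ref{P2.16} do show what such a proof consists of:
\begin{itemize}
\item a normalization of the colors around $e$ via Lemma~\ref{L2.2}(1);
\item a split on the colors of every remaining edge of the configuration;
\item explicit decisions on whether a chain contains a given edge and which of two vertices it meets first;
\item an ordering of the cases so that reductions between them do not cycle;
\item in each case, a closing step that is either an explicit $4$-edge-coloring of $G$ or a violation of Lemma~\ref{L2.2} for a possibly different uncolored edge, reached by swapping a subchain $P_{[a,b]}$ and uncoloring another edge.
\end{itemize}
You flag this as the ``main obstacle'' but carry out none of it.

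Several of the tools you invoke also do not do what you need.
\begin{itemize}
\item Lemma~\ref{L2.2}(1) gives elementarity only for multi-fans. For Kierstead paths with three or more edges the paper offers no elementarity, only the inequality of Lemma~\ref{L2.4}. That lemma needs a short-branch at $e$ whose three further edges carry colors missing at the ends of $e$, and you never exhibit one. Its degree conclusion also needs $e$ to join a full-deficiency pair.
\item Your degree claim for $x,y$ is a guess and does not match how the paper uses Figure~1(b). In Claim~2, for example, $(x,y,z,u,v,w)=(v_5,v_7,v_3,v_1,v_6,v_2)$, so $x,y,v$ are $4$-vertices and $u,w,z$ are $3$-vertices. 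The missing sets at the ends of $e$ are nonempty anyway, since $e$ is uncolored.
\item Passing to a $4$-critical subgraph is unnecessary and can break the argument. Once you assume $\chi'(G)=5$, the hypothesis $\chi'(G-e)=4$ already makes $e$ a critical edge of the Class~$2$ graph $G$, as in the proofs of Propositions~\ref{P2.13}--\ref{P2.16}. A critical subgraph containing $e$ need not contain the rest of the configuration, and its degrees, hence its missing-color sets, may differ.
\end{itemize}
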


\begin{Proposition}\label{P2.13}
	Suppose that $G$ is a graph with $\Delta(G)=4$ and contains the configuration in $Figure$ 1(c).
	If $\chi'(G-e)=4$, then $\chi'(G)=4$.
\end{Proposition}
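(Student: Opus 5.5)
The plan is a recoloring argument in the style of the proofs of Propositions \ref{P2.11} and \ref{P2.12} in \cite{CR2019}. Write $e=xy$ for the distinguished edge of the configuration in Figure 1(c), and fix $\phiv\in\CC^4(G-e)$, which exists by hypothesis. The goal is to modify $\phiv$ by a bounded sequence of Kempe changes and single-edge recolorings until some color is missing at both $x$ and $y$; then $e$ can be colored and $\chi'(G)=4$.

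Throughout, argue by contradiction. Assume no coloring obtainable from $\phiv$ in this way has $\overline{\varphi}(x)\cap\overline{\varphi}(y)\neq\emptyset$; this is exactly the situation in which Lemma~\ref{L2.2} applies to every coloring encountered. By Lemma~\ref{L2.2}(1) applied to the multi-fan $(x,e,y)$, the sets $\overline{\varphi}(x)$ and $\overline{\varphi}(y)$ are disjoint. Their sizes are fixed by the degrees of $x$ and $y$ in the configuration, since the vertices drawn in Figure 1(c) have prescribed small degrees. Fix $\alpha\in\overline{\varphi}(x)$ and $\beta\in\overline{\varphi}(y)$. Lemma~\ref{L2.2}(2) says $x$ and $y$ are $(\alpha,\beta)$-linked, so $P_x(\alpha,\beta,\phiv)$ is a path ending at $y$.

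The key steps, in order, are as follows.

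\emph{Multi-fans at $x$.} Enlarge $(x,e,y)$ by adding the neighbors $z$ of $x$ for which $\phiv(xz)\in\overline{\varphi}(y)$. Because $x$ has degree at most $4$, the union $\overline{\varphi}(V(F))$ quickly fills all of $[4]$. Lemma~\ref{L2.2}(1) then forces $V(F)$ to be elementary. When a vertex of the configuration has degree less than $4$, this yields either an immediate counting contradiction or a complete list of the possible color patterns.

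\emph{Kierstead paths into the rest of the configuration.} Extend $(x,e,y)$ along the edges of the configuration from $y$ to its neighbor $u$, and from $u$ to the low-degree vertices hanging off $u$. Where the configuration contains a short-branch rooted at $e$, apply Lemma~\ref{L2.4}. This bounds how many colors missing at the two leaves can lie in $\overline{\varphi}(x)\cup\overline{\varphi}(y)$.

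\emph{Case split.} Divide into cases according to which colors are missing at the low-degree vertices of the configuration. In each case:
\begin{itemize}
\item locate the ends of the relevant $(\alpha,\gamma)$-chains among the configuration vertices;
\item perform a single swap or an $(\alpha,\beta)$-$(\beta,\gamma)$-swap at a vertex that is not an end of $P_x(\alpha,\beta)$, so that this swap leaves $P_x(\alpha,\beta)$ unchanged;
\item conclude that either $x$ and $y$ are no longer $(\alpha,\beta)$-linked, contradicting Lemma~\ref{L2.2}(2), or some color becomes common to $\overline{\varphi}(x)$ and $\overline{\varphi}(y)$.
\end{itemize}

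I expect the main obstacle to be the case analysis on the endpoints of the Kempe chains. Before any swap, one has to determine exactly where $P_v(\gamma,\delta)$ ends for each low-degree vertex $v$ of the configuration, and check that the swap does not change the chain through $x$ or $y$. This is precisely where the gaps in \cite{Pi} occurred. To keep it under control, I would first use Lemma~\ref{L2.2}(3) and Lemma~\ref{L2.4} to restrict the missing-color patterns. The case that resists, where every chain returns into the configuration, I would try to settle by recoloring two edges of the configuration simultaneously along a subchain $P_{[a,b]}$ and then verifying directly that the resulting coloring is proper.
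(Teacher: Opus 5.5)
\textbf{There is a genuine gap: the proposal is a template, not a proof.} You never pin down configuration 1(c), and the tools you lean on do not fit it. From how the configuration is used in the proof of Theorem~\ref{T1}, it consists of the following:
$e=xy$, with $x$ a $3$-vertex and $N(x)=\{y,z,u\}$; $z$ a $3$-vertex adjacent to both $x$ and $y$; an edge $yu$; and a path $uvw$ ending at a $3$-vertex $w$. Apart from $x$ itself, no low-degree vertex of the configuration is adjacent to $u$, since $w$ is at distance two from $u$. So there is no short-branch rooted at $xy$ whose leaves lie in the configuration, and Lemma~\ref{L2.4} gives you nothing. Lemma~\ref{L2.2}(3) is not needed either.

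Your third bullet is also inconsistent as stated. A swap that leaves $P_x(\alpha,\beta)$ unchanged cannot make $x$ and $y$ cease to be $(\alpha,\beta)$-linked. The contradiction therefore has to come from some \emph{other} pair of colors whose chain is forced to end at a configuration vertex, and you never say which pair or why. The closing fallback (recoloring two edges along a subchain) is a placeholder for exactly the part that needs an argument.

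What actually drives the proof uses only Lemma~\ref{L2.2}(1)--(2). Assume $\chi'(G)=5$, so $xy$ is critical, and let $\overline{\varphi}(x)=\{1,2\}$ and $3\in\overline{\varphi}(y)$. The steps are:
\begin{itemize}
\item The triangle $xyz$ with $d(z)=3$ yields a \emph{second} multi-fan, at $y$. If $\varphi(xz)=4$, then $\varphi(yz)\in\{1,2\}$, so $(y,yx,x,yz,z)$ is a multi-fan. The color missing at $z$ lies in $\{1,2,3\}\subseteq\overline{\varphi}(x)\cup\overline{\varphi}(y)$, which violates elementarity.
\item Hence $\varphi(xz)=3$, $\varphi(xu)=4$, $4\in\overline{\varphi}(z)$, and w.l.o.g.\ $\varphi(yz)=1$ and $\varphi(yu)=2$. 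In particular, $xzy$ is an entire $(1,3)$-chain.
\item Since $\varphi(uv)\in\{1,3\}$, swapping if necessary the $(1,3)$-chain through $uv$ (which is not $xzy$) gives $\varphi(uv)=1$.
\item If $\varphi(vw)=2$, swaps at $w$ make $1\in\overline{\varphi}(w)$. These are safe because $y,z$ are $(3,4)$-linked and $x,z$ are $(1,4)$-linked.
\item The decisive move is then a $(1,3)$-swap \emph{at $x$} along $xzy$. Afterwards $2\in\overline{\varphi}(x)$ and $1\in\overline{\varphi}(y)$, and $P_y(1,2)=yuvw$ ends at $w$ instead of $x$, contradicting Lemma~\ref{L2.2}(2).
\item The cases $\varphi(vw)=3$ and $\varphi(vw)=4$ reduce to this one by further swaps at $w$. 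For $\varphi(vw)=4$, first note $1\notin\overline{\varphi}(w)$, since otherwise $P_x(1,4)=xuvw$ would not end at $z$.
\end{itemize}
None of these ingredients appears in your plan: the second fan at $y$, the isolated chain $xzy$, and the swap at $x$ that reroutes a chain along $yuvw$.
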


\begin{Proposition}\label{P2.14}
	Suppose that $G$ is a graph with $\Delta(G)=4$ and contains the configuration in $Figure$ 1(d).
	If $\chi'(G-e)=4$, then $\chi'(G)=4$.
\end{Proposition}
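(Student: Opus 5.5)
We argue by contradiction. Suppose $G$ contains the configuration of Figure 1(d), $\chi'(G-e)=4$ and $\chi'(G)=5$. Deleting edges not involved in the configuration preserves neither hypothesis in general, so we work directly with $G$. Let $e=xy$ be the distinguished edge of the configuration and fix $\varphi\in\CC^4(G-e)$. We aim to modify $\varphi$ by a bounded sequence of Kempe changes and local recolorings inside the configuration, so that $\overline{\varphi}(x)\cap\overline{\varphi}(y)\neq\emptyset$. Then $e$ can be colored, contradicting $\chi'(G)=5$.

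The first step is bookkeeping. Since $\varphi$ is a $4$-coloring of $G-e$, every endpoint of $e$ and every vertex of degree less than $4$ in the configuration has a prescribed number of missing colors. By Lemma \ref{L2.2}(1) applied to the multi-fan $(x,e,y)$, $\overline{\varphi}(x)\cap\overline{\varphi}(y)=\emptyset$. Using the degree data of Figure 1(d), this typically forces $\overline{\varphi}(x)\cup\overline{\varphi}(y)$ to be all of $[4]$ or to miss exactly one color. Up to renaming colors, we then normalize the colors on the few edges joining $x$ and $y$ to the low-degree vertices of the configuration. This gives a small number of cases, according to whether the color of each such edge lies in $\overline{\varphi}(x)$, in $\overline{\varphi}(y)$, or in neither.

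In each case we use two tools. First, whenever the configuration contains a multi-fan at $x$ or $y$, or a Kierstead path starting with $e$, Lemma \ref{L2.2}(2)--(3) tells us which pairs of vertices must be $(\alpha,\beta)$-linked. Second, when a chain $P_v(\alpha,\beta,\varphi)$ is \emph{not} forced to contain both of two relevant vertices, an $(\alpha,\beta)$-swap at the free end changes the missing set at one vertex without disturbing the other. By the fundamental disjointness property, the chains starting at the two endpoints are either identical or disjoint. So for each pair of missing colors we split into ``linked'' and ``not linked''. The not-linked branch is closed by a single swap followed by coloring $e$. The linked branch is closed either by recoloring an edge of the configuration, for example moving a color from an edge at a $3$-vertex to one of its missing colors, or by a double swap $(\alpha,\beta)$-$(\beta,\gamma)$ at a low-degree vertex. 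This creates a new coloring in which a different pair becomes unlinked. Where the configuration contains a short-branch whose first edge is $e$, we invoke Lemma \ref{L2.4} directly to bound how many missing colors of the two leaves can coincide with $\overline{\varphi}(x)\cup\overline{\varphi}(y)$, which prunes several subcases at once.

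The main obstacle is the linked branch in which both relevant Kempe chains pass through the configuration. There, a swap can alter colors on configuration edges and thereby destroy the normalization we fixed earlier; this is exactly the kind of endpoint analysis that was mishandled in \cite{Pi}. To control it, we will track, for each swap, the order in which the chain meets the configuration vertices (``meets $x$ before $y$''). We will choose to swap on the subchain whose endpoint is a vertex of degree less than $4$ outside the linked pair, so that after the swap the missing sets at $x$ and $y$ are unchanged. Then we show that the new coloring falls into an earlier, already-settled case. If this re-reduction cycles, we fall back on a direct recoloring of two or three edges of the configuration, mimicking the corresponding step in the proofs of Propositions \ref{P2.11} and \ref{P2.12} from \cite{CR2019}.
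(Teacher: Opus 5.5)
\paragraph{There is a genuine gap: this is a generic template, not a proof.} You never engage with the specific configuration, never fix a case split, and every decisive step is deferred: ``this typically forces'', ``a small number of cases'', ``if this re-reduction cycles, we fall back on a direct recoloring''. There is no well-founded order showing that the reductions terminate, so nothing is actually established.

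Several of the announced tools also do not fit.
\begin{itemize}
\item Since $\chi'(G)=5$ and $\chi'(G-e)=4$, $e$ is a critical edge of a Class $2$ graph. So Lemma~\ref{L2.2}(2) forces $x$ and $y$ to be $(\alpha,\beta)$-linked for all $\alpha\in\overline{\varphi}(x)$, $\beta\in\overline{\varphi}(y)$ in \emph{every} coloring in $\CC^4(G-e)$. Your ``not-linked branch'' is therefore empty for these pairs. The whole proof must exhibit a forced linkage that fails, and you never say where such a failure comes from.
\item Lemma~\ref{L2.4} is unavailable. The configuration contains no short-branch starting with $e$, and $(x,y)$ is not a full-deficiency pair.
\item Swapping on a proper subchain does not give a proper coloring unless an edge at the cut point is simultaneously uncolored. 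So your ``main obstacle'' mechanism is not well defined as stated.
\end{itemize}

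\paragraph{What the configuration requires.} In Figure 1(d), $e=xy$ lies in a triangle $xyz$ with $d(x)=d(z)=3$ and $d(y)=4$, together with a path $yuv$ where $d(v)=3$. The paper's proof rests on three concrete ingredients.

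(i) \emph{Normalization.} Take $\overline{\varphi}(x)=\{1,2\}$ and $\overline{\varphi}(y)=\{3\}$. If $\varphi(xz)=4$, then $\varphi(yz)\in\overline{\varphi}(x)$, so $(y,yx,x,yz,z)$ is a multi-fan, and $z$ shares a missing color with $x$ or $y$, contradicting Lemma~\ref{L2.2}(1). Hence $\varphi(xz)=3$. Then $(x,xy,y,xz,z)$ is a multi-fan, so $\overline{\varphi}(z)=\{4\}$, and w.l.o.g.\ $\varphi(yz)=1$. Consequently $(x,xz,z,zy,y)$ is an entire $(1,3)$-chain, so a $(1,3)$-swap on the chain through $uv$ leaves the triangle untouched. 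You also obtain the linkages: $x,y$ are $(1,3)$- and $(2,3)$-linked; $x,z$ are $(1,4)$- and $(2,4)$-linked; $y,z$ are $(3,4)$-linked.

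(ii) \emph{The contradiction engine is $v$.} It misses exactly one color. Swaps at $v$ in color pairs for which two fan vertices are linked leave the fan's missing sets unchanged. This lets you prescribe $\overline{\varphi}(v)$ until $yuv$ closes off a two-colored chain at $v$, contradicting Lemma~\ref{L2.2}(2). For example:
\begin{itemize}
\item $P_y(2,3)=(y,yu,u,uv,v)$ when $\varphi(yu)=2$, $\varphi(uv)=3$ and $2\in\overline{\varphi}(v)$;
\item $P_z(1,4)=(z,zy,y,yu,u,uv,v)$ when $\varphi(yu)=4$, $\varphi(uv)=1$ and $4\in\overline{\varphi}(v)$.
\end{itemize}

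(iii) \emph{An acyclic case order.} Split on $\varphi(yu)\in\{2,4\}$ and then on $\varphi(uv)$. Check that each swap either closes the case or moves to an earlier one. For instance, when $\varphi(yu)=4$, $\varphi(uv)=1$ and $2\in\overline{\varphi}(v)$, the $(2,4)$-swap at $v$ either avoids $yu$, giving the $(1,4)$-contradiction above, or recolors $yu$ to $2$, landing in the case $\varphi(yu)=2$.

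Without these specific steps, your outline does not amount to an argument.
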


\begin{Proposition}\label{P2.15}
	Suppose that $G$ is a graph with $\Delta(G)=4$ and contains the configuration in $Figure$ 1(e). If $\chi'(G-e)=4$, then $\chi'(G)=4$.
\end{Proposition}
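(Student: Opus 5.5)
We argue by contradiction. Suppose $\chi'(G-e)=4$ and $\chi'(G)=5$. Pass to a minimal subgraph $G'\subseteq G$ that still contains $e$ and the configuration of Figure 1(e) and satisfies $\chi'(G')=5$. The criticality arguments then let us assume that $e=ab$ is a critical edge. We fix $\phiv\in\CC^4(G-e)$ and prove that $\phiv$ can be modified into a $4$-edge-coloring of $G-e$ in which some color is missing at both endpoints of $e$. Coloring $e$ with that color gives a $4$-edge-coloring of $G$, which is the contradiction.

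Following the pattern of the proofs of Propositions \ref{P2.11}--\ref{P2.14}, we use the degree constraints of the configuration: the endpoints of $e$ have small degree, and the configuration contains $3$-vertices and $4$-vertices near $e$. Since $\{a,b\}$ is $\phiv$-elementary (Lemma \ref{L2.2}(1) applied to the trivial multi-fan), $\overline{\phiv}(a)$ and $\overline{\phiv}(b)$ are disjoint. We normalize the colors, taking $\overline{\phiv}(a)\supseteq\{1\}$ and $\overline{\phiv}(b)\supseteq\{2\}$, plus any further missing colors forced by the degrees. By Lemma \ref{L2.2}(2), $a$ and $b$ are $(1,2)$-linked. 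The colors on the remaining edges of the configuration are then determined up to symmetry, which gives a small number of cases.

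In each case we grow a Kierstead path or multi-fan from $e$ into the configuration. For example, $K=(a,ab,b,bu,u,ux,x)$ is such a path whenever $\phiv(bu)$ and $\phiv(ux)$ lie in $\overline{\phiv}(a)\cup\overline{\phiv}(b)$. The deficient vertices of the configuration supply extra missing colors. When the branch structure occurs, Lemma \ref{L2.4} forbids two endpoints of a short-branch from both missing colors of $\overline{\phiv}(a)\cup\overline{\phiv}(b)$. In the remaining cases we perform explicit $(\alpha,\beta)$-swaps, or $(\alpha,\beta)$--$(\beta,\gamma)$-swaps, at a vertex of the configuration that misses a color. We track where the relevant Kempe chain $P_v(\alpha,\beta,\phiv)$ ends, using the fact that two such chains are identical or disjoint. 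Each swap either frees a common missing color at $a$ and $b$ or yields a new coloring that violates the elementarity of Lemma \ref{L2.2}(1) or the linkage of Lemma \ref{L2.2}(2),(3). Either outcome contradicts $\chi'(G)=5$.

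I expect the main obstacle to be the endpoint analysis of the Kempe chains, which is exactly where the argument of \cite{Pi} had gaps. After swapping along $P_x(\alpha,\beta)$ we must verify that this chain does not pass through $a$, $b$ or the central vertex $u$ of the configuration, since otherwise the swap would undo the missing-color structure. I would handle this by first checking, for each chain, whether it terminates at one of the configuration's deficient vertices; that check is the source of the case split. When a chain does reach $a$ or $b$, I would redirect the argument: recolor an edge of the configuration, say $ux:\alpha\rightarrow\beta$, to obtain a different coloring of $G-e$, and then apply Lemma \ref{L2.2} to the longer multi-fan this produces.
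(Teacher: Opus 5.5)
Your proposal is a generic template, not a proof of this proposition, and the concrete ingredients it names do not fit configuration~1(e). In 1(e), $e=xy$ joins a $4$-vertex $x$ to a $3$-vertex $y$, so the endpoints of $e$ do not both have small degree. The $3$-vertices $y$ and $z$ are adjacent and have common neighbors $x$ and $u$. The vertex $v$ is a common neighbor of $x$ and $u$, and $vws$ is a path ending at a $3$-vertex $s$. The paper's proof uses no short-branch, no Kierstead path and no Lemma~\ref{L2.4}. What drives it is a rigid normalization that your plan never reaches. Take $\overline{\varphi}(y)=\{1,2\}$ and $\overline{\varphi}(x)=\{3\}$. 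Lemma~\ref{L2.2}(1), applied to the multi-fans $(x,xy,y,xz,z)$ and $(y,yx,x,yz,z)$, forces $\varphi(zy)=3$, $\varphi(uy)=4$ and $\overline{\varphi}(z)=\{4\}$. Up to $1\leftrightarrow 2$ it also gives $\varphi(zx)=1$ and $\varphi(zu)=2$. Hence $(z,zu,u,uy,y)$ is a $(2,4)$-chain and $(x,xz,z,zy,y)$ is a $(1,3)$-chain. This lets one swap to $\varphi(xv)=2$ and $\varphi(uv)=1$ before splitting on $\varphi(vw)\in\{3,4\}$ and $\varphi(ws)$. Your passage to a minimal subgraph is unnecessary, since $\chi'(G-e)<\chi'(G)$ already makes $e$ critical.

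The genuine gap is at the step you yourself flag as the main obstacle. In the hard subcases the chain from $s$ really does run through the configuration. For example, if $\varphi(vw)=3$, $\varphi(ws)=1$ and $4\in\overline{\varphi}(s)$, then $P_s(2,4)$ may contain $xv$, and swapping it destroys the normalization. Your remedy is to stay inside $\CC^4(G-e)$ and apply Lemma~\ref{L2.2} to a longer multi-fan at $e$, but this yields nothing new. By the argument just described, every $\varphi\in\CC^4(G-xy)$ satisfies $\overline{\varphi}(x)\cup\overline{\varphi}(y)\cup\overline{\varphi}(z)=[4]$. So every further vertex of a multi-fan at $x$ or $y$ misses no color, and Lemma~\ref{L2.2} gives no linkage beyond those among $x,y,z$. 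The missing idea is to \emph{move the uncolored edge}. The paper swaps only along the subchain $P_{[s,v]}(2,4)$, or along $P_{[s,x]}(2,4)$ together with $zx:1\rightarrow4$ and $zy:3\rightarrow1$. It then uncolors $xv$ and colors $xy$, which gives a $4$-edge-coloring of $G-xv$. For this coloring, Lemma~\ref{L2.2}(2) fails at the new critical edge $xv$. In the first variant, for instance, after a $(1,3)$-swap at $x$ the chain $(v,vu,u,uy,y,yz,z)$ is a $(1,4)$-chain ending at $z$ rather than at $x$. The same device, with $uv$ uncolored instead, handles the $(1,3)$-chain from $s$ that appears later in that subcase. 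It is needed again when $\varphi(vw)=4$ and $\varphi(ws)=1$. Without this step, or an equivalent one, your case analysis does not close.
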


\begin{Proposition}\label{P2.16}
	Suppose that $G$ is a graph with $\Delta(G)=4$ and contains the configuration in $Figure$ 1(f). If $\chi'(G-e)=4$, then $\chi'(G)=4$.
\end{Proposition}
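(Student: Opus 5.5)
We do not see Figure 1(f), so we have to guess its shape from the surrounding text. Figures 1(a)--(e) are reducible configurations in the style of \cite{CR2019}. These typically consist of a short path or branch of low-degree vertices, namely $2$- and $3$-vertices, joined through $4$-vertices to a distinguished edge $e$. We therefore assume that 1(f) is of this kind. Concretely, we take $e=ab$ with $a$ a $3$-vertex (or a $2$-vertex), and with the further vertices of the configuration ($b$, its neighbours $u,x,y,\dots$) having their degrees as drawn.

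The plan is a direct recolouring argument. Fix $\varphi\in\CC^4(G-e)$ and aim to extend it to $e$, arguing by contradiction. Since $e$ uncoloured yields a multi-fan $(a,e,b)$, Lemma \ref{L2.2}(1) shows that $\{a,b\}$ is $\varphi$-elementary. By Lemma \ref{L2.2}(2), for every $\alpha\in\overline{\varphi}(a)$ and $\beta\in\overline{\varphi}(b)$, the vertices $a$ and $b$ are $(\alpha,\beta)$-linked. Counting colours then gives $|\overline{\varphi}(a)|+|\overline{\varphi}(b)|\le 4$, and this pins down the missing-colour sets up to renaming. Next, grow Kierstead paths $(a,e,b,bu,u,\dots)$ along the configuration. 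Each of them is again elementary, and if it is not, we derive a contradiction from the standard Kierstead path lemma for paths of length at most $3$ (Lemma \ref{L2.4} is the relevant short-branch version). This forces the colours on the edges of the configuration into a small number of patterns.

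For each pattern we carry out a case analysis on Kempe chains $P_a(\alpha,\beta)$, $P_b(\alpha,\beta)$, $P_x(\cdot,\cdot)$ and so on. In each case we perform a swap at a vertex of the configuration whose chain does not end at $a$ or $b$. By the fundamental disjointness property, such a swap changes the missing colours at that vertex without destroying the relevant linkage. The aim is a colouring in which $\overline{\varphi}(a)\cap\overline{\varphi}(b)\neq\emptyset$, at which point $e$ can be coloured. Where a single swap does not suffice, we use composite $(\alpha,\beta)$--$(\beta,\gamma)$ swaps at a low-degree vertex, or shift a fan at $b$ by recolouring $bu$ into a colour missing at $a$ and uncolouring $bu$ instead. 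This reduces the problem to the analogous extension question for the edge $bu$, whose endpoints have more missing colours.

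The main obstacle will be controlling where a chain ends after one swap has already been performed. Exactly this issue caused the gaps in \cite{Pi}. We will therefore track the endpoints explicitly. Whenever a chain $P_x(\alpha,\beta)$ might terminate at $a$, $b$ or $u$, we split into subcases according to which of these it meets first, using the phrase ``meets $x$ before $y$''. In each subcase we verify that the swap chosen avoids the other chains, and we reuse the low degree of the configuration vertices so that each has at least two missing colours to fall back on.
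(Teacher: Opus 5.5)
There is a genuine gap: what you have written describes a toolkit but is not a proof. You never fix the configuration, and the whole argument is deferred to ``for each pattern we carry out a case analysis''. That case analysis is precisely what the statement asserts can be done.

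In Figure 1(f) the uncoloured edge is $e=xy$ with $d(x)=3$ and $d(y)=4$. The vertex $y$ has a $3$-neighbour $z$ and a $4$-neighbour $v$, and $v$ has two $3$-neighbours $u$ and $w$.

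The one concrete mechanism you commit to does not work here. Kierstead paths such as $(x,xy,y,yv,v,vu,u)$, when the colours permit them, pass through two $\Delta$-vertices. With only $\Delta$ colours nothing guarantees that such paths are elementary, so there is no ``standard Kierstead path lemma'' giving a contradiction. Lemma \ref{L2.4}, applied with $(a,b,u,x,y):=(x,y,v,u,w)$, explicitly allows one overlap. It only says that at most one of $u,w$ misses a colour of $\overline{\varphi}(x)\cup\overline{\varphi}(y)=\{1,2,3\}$. Its second clause is unavailable, since $d(x)+d(y)=7\neq\Delta+2$.

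So the branch $x$--$y$--$v$--$\{u,w\}$ yields no contradiction by itself. Consider $\varphi(yz)=1$, $\varphi(yv)=2$, $\varphi(vu)=1$, $\varphi(vw)=3$, $\overline{\varphi}(u)=\{4\}$, $\overline{\varphi}(w)=\{1\}$. This pattern is not excluded by the fan $(x,xy,y)$ or by Lemma \ref{L2.4}, even though it makes $(x,xy,y,yv,v,vw,w)$ non-elementary. Ruling it out is exactly where the paper's proof does its real work (Subcase 1.1.1).

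The missing idea is to use the second $3$-neighbour $z$ of $y$, which your guessed shape does not contain.
\begin{itemize}
\item If $\varphi(yz)\in\overline{\varphi}(x)$, say $\varphi(yz)=1$, then $(y,yx,x,yz,z)$ is a multi-fan at $y$ with respect to $yx$. Hence $\overline{\varphi}(z)=\{4\}$.
\item Lemma \ref{L2.2}(2),(3) then give that $y,z$ are $(3,4)$-linked and $x,z$ are $(2,4)$-linked. This is on top of $x,y$ being $(1,3)$- and $(2,3)$-linked.
\item The case $\varphi(yz)=4$ is handled by first arranging $3\in\overline{\varphi}(z)$. One then either reduces to the previous case by a $(1,4)$-swap at $z$, or reaches a linkage contradiction directly.
\end{itemize}

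With these linkages, the paper splits on $\varphi(yv)$ and on $(\varphi(vu),\varphi(vw))$. It normalises $\overline{\varphi}(u)$ and $\overline{\varphi}(w)$ by swaps that these linkages protect. Every branch then closes in one of three ways:
\begin{itemize}
\item a short chain such as $(y,yv,v,vu,u)$ becomes a $(2,3)$- or $(3,4)$-chain, contradicting $P_y(2,3)=P_x(2,3)$ or $P_y(3,4)=P_z(3,4)$;
\item after deciding whether $P_u(2,4)$ meets $v$ or $y$ first, the uncoloured edge is moved from $xy$ to $vu$ or $vw$, and a multi-fan at $v$ violates Lemma \ref{L2.2};
\item an explicit $4$-edge-colouring of $G$ results.
\end{itemize}

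Your remarks about tracking which vertex a chain meets first are in the right spirit. But without the linkages supplied by $z$, the swaps at $u$ and $w$ are uncontrolled, and nothing in your sketch shows that the case analysis terminates.
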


\section{Proof of Theorem 1}

\begin{proof}
Sufficiency is easy. Indeed, if $G$ contains an overfull subgraph $H$ with $\Delta(H)=\Delta(G)=4$, then $H$ needs at least $5$ colors to get a proper edge-coloring. Hence $G$ also needs at least $5$ colors, and by Vizing's theorem $\chi'(G)\le \Delta(G)+1=5$, so $G$ is Class 2.

Now we will prove necessity. Since $G$ is a Class $2$ graph with $\Delta(G)=4$, $G$ contains a $4$-critical subgraph $H$ of order $h$. Note that $\D(G)=4>n/3$. Hence $h\le 11$. By Lemma \ref{L2.8}, no even-order $4$-critical graph has order at most $10$, hence $h\in\{5,7,9,11\}$. If $H$ is a $4$-regular graph, then $|E(H)|=2h > 4\left\lfloor h/2 \right\rfloor$, which implies that $H$ is overfull. Now $G$ contains an overfull subgraph $H$ with $\Delta(H)=4$, as desired. Therefore, it remains to consider the case that $H$ is not $4$-regular.  Note that $\delta(H)\ge2$ as $H$ is critical. Let $h_i$ denote the number of $i$-vertices in $H$ with $i\in \{2,3,4\}$.
Since $h\in\{5,7,9,11\}$ and $\lceil h/2 \rceil \le h_4$ by Lemma \ref{L2.5}, we have $h-5 \le \lceil h/2 \rceil \le h_4 \le h-1 $. Notice that $h_3$ is even since the sum of degrees is even.
	
	Claim $1$: The case $h_2=1$, $h_3=4$, and $h_4=h-5$ cannot hold.
	
	\begin{proof}
		
	Suppose that $h_2=1$, $h_3=4$, and $h_4=h-5$. By Proposition \ref{P2.3}, counting the minimum  required number of  $4$-neighbors of the $2$-vertices and the $3$-vertices, we have $h_4 \ge 2h_2 +4 =6$. Since $h\in\{5,7,9,11\}$, it follows that $h_4=6$ and $h=11$.
	Let $v_1$ be the $2$-vertex, $v_2,v_3,v_4,v_5$ be the $3$-vertices, and $v_6,\dots,v_{11}$ be the $4$-vertices of $H$. By Proposition \ref{P2.3}(1), let $N_H(v_1)=\{v_6,v_7\}$, and  there is no edge between $\{v_2,v_3,v_4,v_5\}$ and $\{v_6,v_7\}$. Therefore,
    $v_6$ has at least two neighbors in $\{v_8,v_9,v_{10},v_{11}\}$  since $d_H(v_6)=4$.
	Without loss of generality, assume that $v_6v_8, v_6v_9 \in E(H)$.
	By Proposition \ref{P2.3}(2), so each vertex in $\{v_2,v_3,v_4,v_5\}$ has at least two neighbors in $\{v_8,v_9,v_{10},v_{11}\}$, which gives at least eight edges between the two sets. On the other hand, each vertex in $\{v_8,v_9,v_{10},v_{11}\}$ has at most two neighbors in $\{v_2,v_3,v_4,v_5\}$, which gives at most eight edges between the two sets. Therefore, exactly eight edges exist between the two sets, and each vertex in either set is adjacent to exactly two vertices in the other set.
	Without loss of generality, $v_8v_2,v_8v_3\in E(H)$. Let $\phiv\in\CC^4(H-v_1v_6)$.  Note that $H_1$ is a short-branch with $V(H_1)=\{v_1,v_6,v_8,v_2,v_3\}$ and $E(H_1)=\{v_1v_6,v_6v_8,v_8v_2,v_8v_3\}$. Since $(v_1,v_1v_6,v_6)$ is a multi-fan, $\{v_1,v_6\}$ is $\phiv$-elementary by Lemma \ref{L2.2}(1). Thus $|\overline{\varphi}(v_1)\cup \overline{\varphi}(v_6)|=|\overline{\varphi}(v_1)|+|\overline{\varphi}(v_6)|=\Delta(H)=4$ and $\overline{\varphi}(v_1)\cup \overline{\varphi}(v_6)=[4]$. Therefore, in $H_1$,  $K:=(v_1,v_1v_6,v_6,v_6v_8,v_8,v_8v_2,v_2)$ and $K^*:=(v_1,v_1v_6,v_6,v_6v_8,v_8,v_8v_3,v_3)$ are two Kierstead paths. Since $d_H(v_1)+d_H(v_6)= \Delta(H)+2$, $(v_1,v_6)$ is a full-deficiency pair in $H$. Thus we have $max\{d_H(v_2),d_H(v_3)\}=\Delta(H)$ by Lemma \ref{L2.4}, which contradicts the fact that $d_H(v_2)=d_H(v_3)=3 < \Delta(H)$.	
	\end{proof}

	By Lemma \ref{L2.7}, Proposition \ref{P2.10}, and Claim $1$, there are still three cases for $H$: $h_2=1$, $h_3=0$, and $h_4=h-1$; $h_2=0$, $h_3=2$, and $h_4=h-2$; or $h_2=0$, $h_3=4$, and $h_4=h-4$. For the first two cases,  $|E(H)|=2h-1 > 4 \lfloor h/2 \rfloor=2(h-1)$, so $H$ is overfull. Now $G$ contains an overfull subgraph $H$ with $\D(H)=4$, as desired. Thus, only the last case for $H$ requires further analysis, which will  be shown as impossible, and then proceed to complete the entire proof.

    By Proposition \ref{P2.3}(2),  counting the minimum  required number of $4$-neighbors of the $3$-vertices, we have $h_4 \ge 4$. As $h \in \{5,7,9,11\}$, it follows that  $h = 9$ or $h=11$. Let $v_1,v_2,v_3,v_4$ be the four $3$-vertices and $v_5, \dots, v_h$ be the $4$-vertices of $H$. Since $H$ is critical, every edge of $H$ is critical; hence any configuration in Figure $1$ is forbidden in $H$.

    Observation: If two adjacent $3$-vertices have two common 4-neighbors in $H$, then the two common 4-neighbors are not adjacent.
    \begin{proof}
    Let $v_1v_2 \in E(H)$ and $v_5,v_6 \in N_H(v_1)\cap N_H(v_2)$. Suppose that $v_5v_6 \in E(H)$. Let $\phiv\in\CC^{4}(H-v_1v_2)$. Since $(v_1,v_1v_2,v_2)$ is a multi-fan, $\{v_1,v_2\}$ is $\phiv$-elementary by Lemma \ref{L2.2}(1). Let $\overline{\varphi}(v_1)=\{1,2\}$ and $\overline{\varphi}(v_2)=\{3,4\}$. Then color edges $v_1v_5$ and $v_1v_6$ with colors $3$ and $4$, respectively, and $v_2v_5$ and $v_2v_6$ with colors $1$ and $2$, respectively. Thus the edge $v_5v_6$ cannot be colored from $\{1,2,3,4\}$, which is a contradiction.
    \end{proof}
	
	Claim $2$: $h=11$.
	
	\begin{proof}
	Suppose that $h=9$. By Proposition \ref{P2.3}(2), we have  $|E(H[\{v_1,v_2,v_3,v_4\}])| \le 2$. We consider the following three cases.
		
		If $|E(H[\{v_1,v_2,v_3,v_4\}])|=0$, then each vertex in $\{v_1,v_2,v_3,v_4\}$ has three neighbors in $\{v_5,\dots,v_9\}$, which gives  twelve edges between the two sets. On the other hand, each vertex in $\{v_5,\dots,v_9\}$ has at most two neighbors in $\{v_1,v_2,v_3,v_4\}$ by Proposition \ref{P2.3}(2), which gives at most ten edges between the two sets, a contradiction.
		
		If $|E(H[\{v_1,v_2,v_3,v_4\}])|=1$, then let $v_1v_2 \in E(H)$ and $v_3v_4 \notin E(H)$. Since $d_H(v_1)+d_H(v_2)=3+3=\Delta(H)+2$, $(v_1,v_2)$ is a full-deficiency pair in $H$. As $d_H(v_1)=d_H(v_2)=3 < \Delta(H)$, there is no common neighbor between $\{v_1,v_2\}$ and $\{v_3,v_4\}$ by Lemma \ref{L2.3}(2), since otherwise   $d_H(v_3)$ or $d_H(v_4)= \Delta(H)$, a contracdition. Therefore, let $N_H(v_1) \cap N_H(v_2)=\{v_5,v_6\}$ and $N_H(v_3) \cap N_H(v_4)=\{v_7,v_8,v_9\}$.
		We claim that $v_5$ is not adjacent to any vertex among $\{v_7, v_8, v_9\}$. Otherwise, suppose that $v_5v_7 \in E(H)$; then $H$ contains  the configuration in Figure 1(b) with
		$(x,y,z,u,v,w):=(v_5,v_7,v_3,v_1,v_6,v_2)$, contradicting {Proposition} \ref{P2.12}. The other cases are similar. By the Observation, $v_5$ is not adjacent to $v_6$. Hence, $v_5$ has two neighbors $v_1,v_2$, contradicting the fact that $d_H(v_5)=4$.
		
		If $|E(H[\{v_1,v_2,v_3,v_4\}])|=2$, then let $v_1v_2,v_3v_4 \in E(H)$. Since $d_H(v_1)+d_H(v_2)=\Delta(H)+2$ and $d_H(v_1)=d_H(v_2)=3 < \Delta(H)$, there is no common neighbor between $\{v_1,v_2\}$ and $\{v_3,v_4\}$  by Lemma \ref{L2.3}(2).
        %, since otherwise $d_H(v_3)$ or $d_H(v_4)=\Delta(H)$, a contradition.
        Since there are only five 4-vertices available, two adjacent 3-vertices must have at least one common neighbor in order to satisfy their degree requirements. Now we consider the number of common neighbors of $v_1$ and $v_2$. When $|N_H(v_1) \cap N_H(v_2)|=1$, let $N_H(v_1)=\{v_2,v_5,v_6\}$ and $N_H(v_2)=\{v_1,v_5,v_9\}$. Since $\{v_1,v_2\}$ and $\{v_3,v_4\}$ have no common neighbors between them, $N_H(v_3) \cap N_H(v_4)=\{v_7,v_8\}$.
		We claim that $v_6$ is not adjacent to either $v_7$ or $v_8$.
		Otherwise, suppose that $v_6v_7 \in E(H)$; then $H$ contains the configuration in Figure 1(b) with
		$(x,y,z,u,v,w):=(v_7,v_6,v_1,v_3,v_8,v_4)$, contradicting {Proposition} \ref{P2.12}.  The other case is similar.
		Hence, $v_6$ has at most three neighbors $v_1,v_5,v_9$, contradicting the fact that $d_H(v_6)=4$.
		When $|N_H(v_1) \cap N_H(v_2)|=2$, let $N_H(v_1) \cap N_H(v_2)=\{v_5,v_6\}$. If $|N_H(v_3) \cap N_H(v_4)|=1$, then this reduces to the case $|N_H(v_1) \cap N_H(v_2)|=1$. If $|N_H(v_3) \cap N_H(v_4)|=2$, then let $N_H(v_3) \cap N_H(v_4)=\{v_7,v_8\}$.
		We claim that $v_5$ is not adjacent to either $v_7$ or $v_8$.		
		Otherwise, suppose that $v_5v_7 \in E(H)$; then $H$ contains the configuration in Figure 1(b) with
		$(x,y,z,u,v,w):=(v_5,v_7,v_3,v_1,v_6,v_2)$, contradicting {Proposition} \ref{P2.12}. The other case is similar.
		By the Observation, we have $v_5v_6 \notin E(H)$.
		Hence, $v_5$ has at most three neighbors $v_1,v_2,v_9$, contradicting  the fact  that $d_H(v_5)=4$. This completes the proof.
	\end{proof}

	By Proposition \ref{P2.3}(2), we have $|E(H[\{v_1,v_2,v_3,v_4\}])| \le 2$. We consider the following three cases to get contradictions for $H$.

	\begin{figure}[htbp]
\centering
\includegraphics[width=\linewidth]{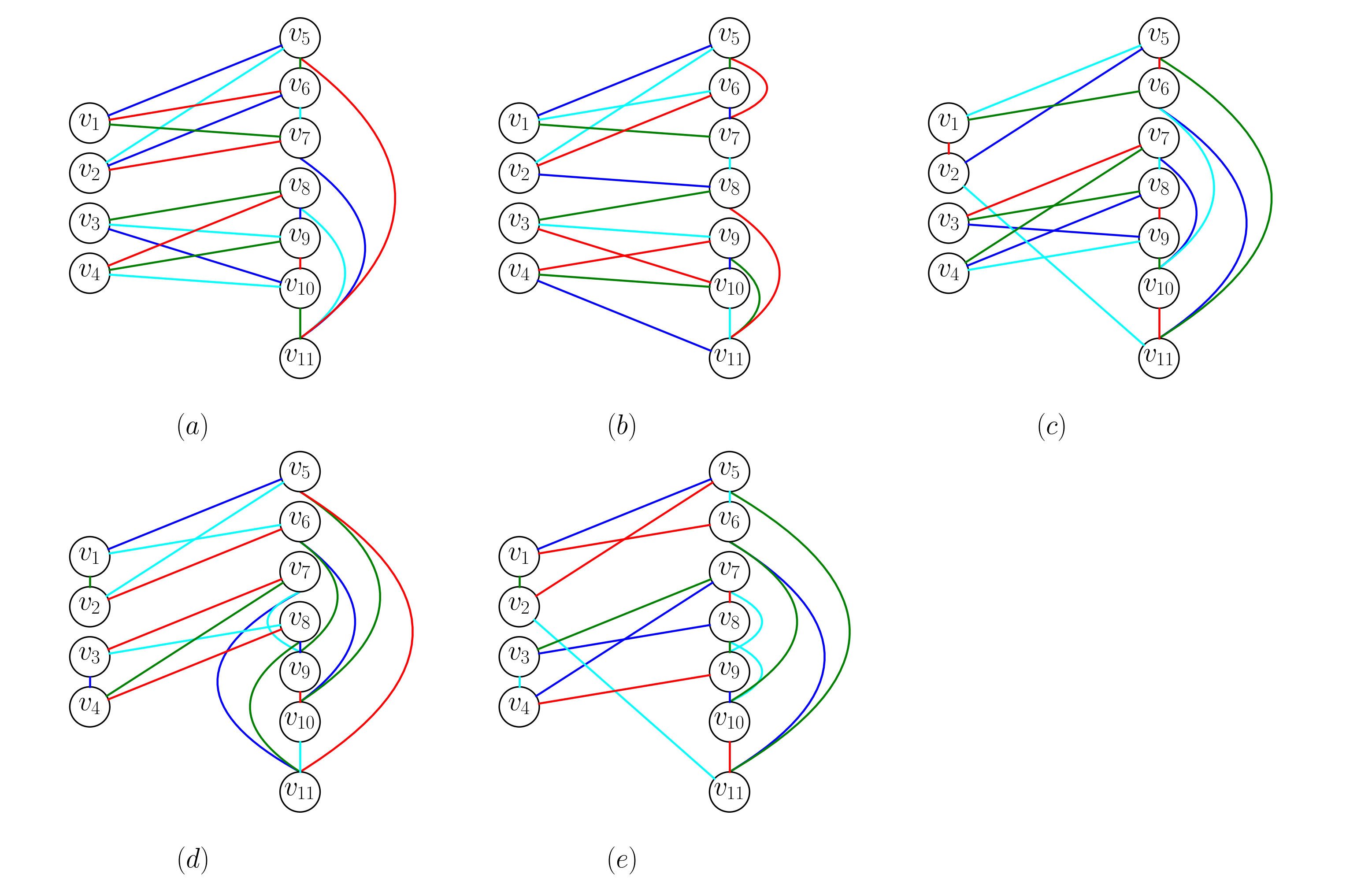}
\captionsetup{labelsep=none}
\caption{}
\end{figure}

\begin{figure}[htbp]
\centering
\includegraphics[width=\linewidth]{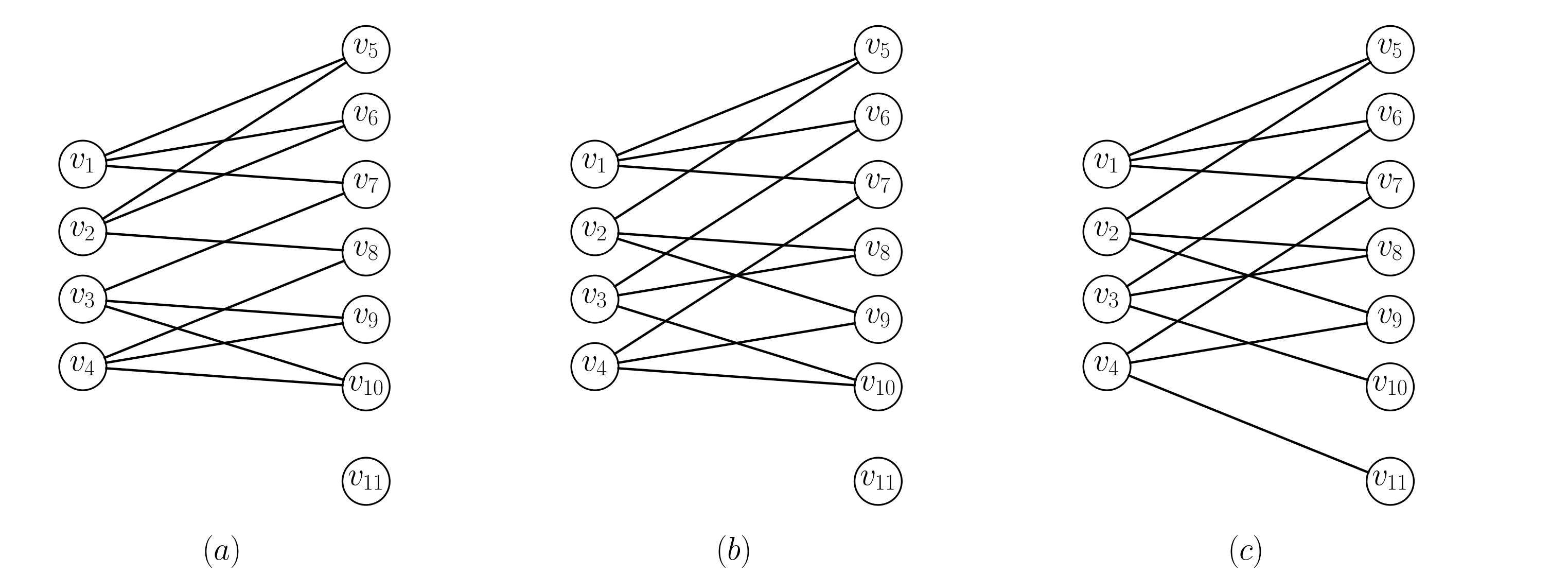}
\captionsetup{labelsep=none}
\caption{}
\end{figure}

	Case $1$. $|E(H[\{v_1,v_2,v_3,v_4\}])|=0$.
	
	Let $N_H(v_1)=\{v_5,v_6,v_7\}$. Each vertex in $\{v_1,v_2,v_3,v_4\}$ has exactly three neighbors in $\{v_5,\dots, v_{11}\}$, which gives twelve edges between the two sets.  We consider the following two subcases by Proposition \ref{P2.3}(2).

	Subcase $1.1$. Six $4$-vertices have two $3$-neighbors.

	First suppose that two $4$-vertices have two common $3$-neighbors. Without loss of generality, let $v_5,v_6 \in N_H(v_2)$. If $v_2v_7 \in E(H)$, then let $N_H(v_3)=N_H(v_4)=\{v_8,v_9,v_{10}\}$. There is no edge between $\{v_5,v_6,v_7\}$ and $\{v_8,v_9,v_{10}\}$. Otherwise, suppose that $v_5v_8 \in E(H)$; then $H$  contains the configuration in Figure 1(b) with
	$(x,y,z,u,v,w):=(v_5,v_8,v_3,v_1,v_6,v_2)$, contradicting  {Proposition} \ref{P2.12}. The other cases are similar.
	Therefore, each of $\{v_5,v_6,v_7\}$ and $\{v_8,v_9,v_{10}\}$ contains two edges. Without loss of generality,  $H$ is precisely the graph shown in Figure $2$(a), which has a proper $4$-edge-coloring, contradicting the fact that $H$ is a $4$-critical graph.  If $v_2v_7 \notin E(H)$, then without loss of generality, $H$ contains the local structure shown in Figure $3$(a). We claim that $v_7$ is not adjacent to any vertex among $\{v_5, v_6, v_9, v_{10}\}$. Otherwise, suppose that $v_5v_7 \in E(H)$; then $H$  contains the configuration in Figure 1(a) with
	$(x,y,z,v,w):=(v_1,v_3,v_2,v_5,v_7)$, contradicting {Proposition} \ref{P2.11}. The other cases are similar.
	We also have that $v_7$ is not adjacent to $v_8$. Otherwise, suppose that $v_7v_8 \in E(H)$; then $H$  contains the configuration in Figure 1(f) with
	$(x,y,z,u,v,w):=(v_1,v_7,v_3,v_2,v_8,v_4)$, contradicting {Proposition} \ref{P2.16}.
	Therefore, $v_7$ has at most three neighbors $v_1,v_3,v_{11}$, contradicting  the fact that $d_H(v_7)=4$.

	Therefore, no two $4$-vertices have two common $3$-neighbors. Then $H$ has the local structure shown in Figure 3(b). We claim that $v_5$ is not adjacent to any vertex among $\{v_6, v_7, v_8, v_9\}$. Otherwise, suppose that $v_5v_6 \in E(H)$; then $H$  contains the configuration in Figure 1(a) with
	$(x,y,z,v,w):=(v_1,v_3,v_2,v_5,v_6)$, contradicting {Proposition} \ref{P2.11}. The other cases are similar.  We also have that $v_5$ is not adjacent to $v_{10}$. Otherwise, suppose that $v_5v_{10} \in E(H)$; then $H$  contains the configuration in Figure 1(f) with
	$(x,y,z,u,v,w):=(v_1,v_5,v_2,v_3,v_{10},v_4)$, contradicting {Proposition} \ref{P2.16}.
	Thus, $v_5$ has at most three neighbors $v_1,v_2,v_{11}$, contradicting the fact  that $d_H(v_5)=4$.
	
	Subcase $1.2$. Five $4$-vertices have two $3$-neighbors, and two $4$-vertices have one $3$-neighbor.

		First suppose that two $4$-vertices have two common $3$-neighbors. Without loss of generality, let $v_5,v_6\in N_H(v_2)$. If $v_2v_7\in E(H)$, then each vertex in $\{v_8, \dots, v_{11}\}$ has at least one neighbor in $\{v_3, v_4\}$. There is no edge between $\{v_5,v_6,v_7\}$ and $\{v_8,v_9,v_{10},v_{11}\}$; otherwise, the configuration in Figure $1$(b) would arise, contradicting {Proposition} \ref{P2.12}. Thus, $v_5v_6,v_5v_7,v_6v_7 \in E(H)$.
	Let $H_2 = H[\{v_1, v_2, v_5, v_6, v_7\}]$. Note that $d_{H_2}(v)=d_H(v)$ for every $v\in V(H_2)$.  There is no edge between $V(H_2)$ and $V(H)\setminus V(H_2)$,  contradicting the fact that $H$ is connected.
	If $v_2v_7\notin E(H)$, then let $v_2v_8 \in E(H)$.  Each vertex in $\{v_9, v_{10}, v_{11}\}$ has at least one neighbor in $\{v_3, v_4\}$ and thus we claim that $v_5$ is not adjacent to any vertex among $\{v_9,v_{10},v_{11}\}$; otherwise, the configuration in Figure $1$(b) would arise, contradicting  {Proposition} \ref{P2.12}. So $v_5$ is adjacent to exactly two vertices in $\{v_6,v_7,v_8\}$. When $v_5v_7,v_5v_8\in E(H)$,  there is no edge between $\{v_3,v_4\}$ and $\{v_7,v_8\}$. Otherwise, suppose that $v_3v_7 \in E(H)$; then $H$ contains the configuration in Figure 1(b) with
	$(x,y,z,u,v,w):=(v_5,v_7,v_3,v_1,v_6,v_2)$, contradicting {Proposition} \ref{P2.12}.  The other cases are similar.
	Thus, we have $N_H(v_3)=N_H(v_4)=\{v_9,v_{10},v_{11}\}$. There is no edge between $\{v_9,v_{10},v_{11}\}$ and $\{v_6,v_7,v_8\}$. Otherwise, suppose that $v_6v_9 \in E(H)$; then $H$ contains the configuration in Figure 1(b) with
	$(x,y,z,u,v,w):=(v_9,v_6,v_1,v_3,v_{10},v_4)$, contradicting {Proposition} \ref{P2.12}.  The other cases are similar.
	Thus, $v_9v_{10},v_9v_{11},v_{10}v_{11}\in E(H)$.
    Let $H_3 = H[\{v_3,v_4,v_9,v_{10},v_{11}\}]$. Note that $d_{H_3}(v)=d_H(v)$ for every $v\in V(H_3)$.  There is no edge between $V(H_3)$ and $V(H)\setminus V(H_3)$,  contradicting the fact that $H$ is connected.
	When $v_5v_6,v_5v_7\in E(H)$ (by symmetry), $v_7$ is not adjacent to either $v_3$ or $v_4$. Otherwise, suppose that $v_3v_7 \in E(H)$; then $H$  contains the configuration in Figure 1(b) with
	$(x,y,z,u,v,w):=(v_5,v_7,v_3,v_1,v_6,v_2)$, contradicting  {Proposition} \ref{P2.12}.  The other case is similar. Therefore, all neighbors of $v_3$ and $v_4$ are contained in $\{v_8,v_9,v_{10},v_{11}\}$. We consider the following two subcases. If $|N_H(v_3)\cap N_H(v_4)|=2$, then let $N_H(v_3)=\{v_8,v_9,v_{10}\}$ and $N_H(v_4)=\{v_9,v_{10},v_{11}\}$. We claim that there is no edge between $\{v_9,v_{10}\}$ and $\{v_6,v_7,v_8\}$. Otherwise, suppose that $v_7v_9 \in E(H)$; then $H$  contains the configuration in Figure 1(b) with
	$(x,y,z,u,v,w):=(v_9,v_7,v_1,v_3,v_{10},v_4)$, contradicting {Proposition} \ref{P2.12}.  The other cases are similar. Similarly, we have that $v_6$ is not adjacent to $v_8$.
	Since all remaining adjacencies are uniquely determined by the degree conditions, $H$ is precisely the graph shown in Figure 2(b), which has a proper $4$-edge-coloring, contradicting the fact that $H$ is a $4$-critical graph.
	If $|N_H(v_3)\cap N_H(v_4)|=3$, then $N_H(v_3)= N_H(v_4)=\{v_9,v_{10},v_{11}\}$. Similarly, we  have that there is no edge between $\{v_9,v_{10},v_{11}\}$ and $\{v_6,v_7,v_8\}$. Thus, $v_8$ has at most three neighbors $v_2,v_6,v_7$, contradicting  the fact that $d_H(v_8)=4$.
	
	Therefore, no two $4$-vertices have two common $3$-neighbors. Without loss of generality, $H$ contains the local structure shown in Figure 3(c). We claim that $v_6$ is not adjacent to any vertex among $\{v_5, v_7, v_8\}$. Otherwise, suppose that $v_5v_6 \in E(H)$; then $H$  contains the configuration in Figure 1(a) with
	$(x,y,z,v,w):=(v_1,v_3,v_2,v_5,v_6)$, contradicting  {Proposition} \ref{P2.11}.  The other cases are similar. We also have that $v_6$ is not adjacent to $v_9$. Otherwise, suppose that $v_6v_9 \in E(H)$; then $H$  contains the configuration in Figure 1(f) with
	$(x,y,z,u,v,w):=(v_1,v_6,v_3,v_2,v_9,v_4)$, contradicting  {Proposition} \ref{P2.16}.
	Since $d_H(v_6)=4$, $v_6$ is adjacent to both $v_{10}$ and $v_{11}$. Similarly, each vertex in $\{v_5,v_7,v_8,v_9\}$ is adjacent to both $v_{10}$ and $v_{11}$. Thus, $v_{10}$ has at least six neighbors $v_3,v_5,v_6,v_7,v_8,v_9$, contradicting  the fact that $d_H(v_{10})=4$.

	Case $2$. $|E(H[\{v_1,v_2,v_3,v_4\}])|=1$. Without loss of generality, we may assume that $v_1v_2\in E(H),v_3v_4\notin E(H)$, and $v_5,v_6 \in N_H(v_1)$.
	
	Since $d_H(v_1)+d_H(v_2)=\Delta(H)+2$, $(v_1,v_2)$ is a full-deficiency pair. As $d_H(v_1)=d_H(v_2)=3 < \Delta(H)$, there is no common neighbor between $\{v_1,v_2\}$ and $\{v_3,v_4\}$  by Lemma \ref{L2.3}(2).
	
	Subcase $2.1$. $|N_H(v_1) \cap N_H(v_2)|=2$.
	
	Let $N_H(v_2)=\{v_1,v_5,v_6\}$. Therefore, all neighbors of $v_3$ and $v_4$ are contained in $\{v_7,\dots,\\v_{11}\}$. Let $N_H(v_3)=\{v_7,v_8,v_9\}$.
	By the Observation, we have $v_5v_6 \notin E(H)$.
	
		If $|N_H(v_3) \cap N_H(v_4)|=3$, then $N_H(v_4)=\{v_7,v_8,v_9\}$. We claim that there is no edge between $\{v_5,v_6\}$ and $\{v_7, v_8, v_9\}$. Otherwise, suppose that $v_5v_7 \in E(H)$; then $H$  contains the configuration in Figure 1(b) with
	$(x,y,z,u,v,w):=(v_5,v_7,v_3,v_1,v_6,v_2)$, contradicting {Proposition} \ref{P2.12}.  The other cases are similar.
	Thus,  $v_5$ and $v_6$ are both adjacent to each of $v_{10}$ and $v_{11}$. Note that $v_{10}$ is not adjacent to any vertex among $\{v_7, v_8, v_9\}$. Otherwise, suppose that $v_7v_{10} \in E(H)$; then $H$  contains the configuration in Figure 1(e) with
	$(x,y,z,u,v,w,s):=(v_5,v_2,v_1,v_6,v_{10},v_7,v_3)$, contradicting  {Proposition} \ref{P2.15}.  The other cases are similar.
	Hence, $v_{10}$ has at most three neighbors $v_5,v_6,v_{11}$, contradicting the fact  that $d_H(v_{10})=4$.
	
	If $|N_H(v_3)\cap N_H(v_4)|=1$ ($2$, respectively), then let $N_H(v_4)=\{v_7,v_{10},v_{11}\}$ ($N_H(v_4)=\{v_7,v_8,v_{10}\}$, respectively). We claim that $v_5$ is not adjacent to any vertex among $\{v_7,\dots,v_{11}\}$ ($\{v_7,\dots,v_{10}\}$, respectively). Otherwise, suppose that $v_5v_7 \in E(H)$; then $H$  contains the configuration in Figure 1(b) with
	$(x,y,z,u,v,w):=(v_5,v_7,v_3,v_1,v_6,v_2)$, contradicting {Proposition} \ref{P2.12}.  The other cases are similar.
	In both cases, $v_5$ has at most three neighbors $v_1,v_2,v_{11}$, contradicting the fact  that $d_H(v_5)=4$.

	Subcase $2.2$. $|N_H(v_1) \cap N_H(v_2)|=1$.
	
	Let $ N_H(v_2)=\{v_1,v_5,v_{11}\}$. Therefore, all neighbors of $v_3$ and $v_4$ are contained in $\{v_7,\dots,v_{10}\}$. Let $N_H(v_3)=\{v_7,v_8,v_9\}$.
	
	If $|N_H(v_3) \cap N_H(v_4)|=3$, then $N_H(v_4)=\{v_7,v_8,v_9\}$. There is no edge between $\{v_7,v_8,v_9\}$ and $\{v_5,v_6,v_{11}\}$. Otherwise, suppose that $v_5v_7 \in E(H)$; then $H$  contains the configuration in Figure 1(b) with
	$(x,y,z,u,v,w):=(v_7,v_5,v_1,v_3,v_8,v_4)$, contradicting {Proposition} \ref{P2.12}.  The other cases are similar.
	Thus, the subgraph induced by $\{v_7, v_8, v_9\}$ contains at least two edges. If the subgraph induced by $\{v_7, v_8, v_9\}$ contains two edges, then we may assume without loss of generality that $v_7v_8,v_8v_9 \in E(H)$. Since all remaining adjacencies are uniquely determined by the degree conditions, $H$ is precisely the graph shown in Figure 2(c), which has a proper $4$-edge-coloring, contradicting the fact that $H$ is a $4$-critical graph. If the subgraph induced by $\{v_7, v_8, v_9\}$ contains three edges, then $v_7v_8,v_7v_9,v_8v_9 \in E(H)$. Now $v_{10}$ has at most three neighbors $v_5,v_6,v_{11}$, contradicting the fact that $d_H(v_{10})=4$.
	
	If $|N_H(v_3) \cap N_H(v_4)|=2$, then let $N_H(v_4)=\{v_7,v_8,v_{10}\}$. We claim that $v_5$ is not adjacent to any vertex among $\{v_7,\dots,v_{10}\}$. Otherwise, suppose that $v_5v_7 \in E(H)$; then $H$  contains the configuration in Figure 1(d) with
	$(x,y,z,u,v):=(v_1,v_5,v_2,v_7,v_3)$, contradicting {Proposition} \ref{P2.14}.  The other cases are similar.
	Since $d_H(v_5)=4$, we have $v_5v_6,v_5v_{11} \in E(H)$. We also have that $v_6$ is not adjacent to any vertex among $\{v_7,\dots,v_{10}\}$. Otherwise, suppose that $v_6v_7 \in E(H)$; then $H$  contains the configuration in Figure 1(c) with
	$(x,y,z,u,v,w):=(v_1,v_5,v_2,v_6,v_7,v_3)$, contradicting  {Proposition} \ref{P2.13}.  The other cases are similar.
	Therefore, $v_6$ has at most three neighbors $v_1,v_5,v_{11}$, contradicting the fact  that $d_H(v_6)=4$.

	Subcase $2.3$. $|N_H(v_1) \cap N_H(v_2)|=0$.
	
	Let $ N_H(v_2)=\{v_1,v_{10},v_{11}\}$. Therefore, all neighbors of $v_3$ and $v_4$ are contained in $\{v_7,v_8,v_9\}$, that is, $N_H(v_3)=N_H(v_4)=\{v_7,v_8,v_9\}$. There is no edge between $\{v_7,v_8,v_9\}$ and $\{v_5,v_6,v_{10},v_{11}\}$. Otherwise, suppose that $v_5v_7 \in E(H)$; then $H$  contains the configuration in Figure 1(b) with
	$(x,y,z,u,v,w):=(v_7,v_5,v_1,v_3,v_8,v_4)$, contradicting {Proposition} \ref{P2.12}.  The other cases are similar.
	Thus, $v_7v_8,v_7v_9,v_8v_9 \in E(H)$. Let $H_4 = H[\{v_3,v_4,v_7,v_8,\allowbreak v_9\}]$. Note that $d_{H_4}(v)=d_H(v)$ for every $v\in V(H_4)$.  There is no edge between $V(H_4)$ and $V(H)\setminus V(H_4)$,  contradicting the fact that $H$ is connected.

	Case $3$. $|E(H[\{v_1,v_2,v_3,v_4\}])|=2$. Without loss of generality, we may assume that $v_1v_2,v_3v_4\in E(H)$ and $v_5,v_6 \in N_H(v_1)$.
	
	Since $d_H(v_1)+d_H(v_2)=\Delta(H)+2$, $(v_1,v_2)$ is a full-deficiency pair. As $d_H(v_1)=d_H(v_2)=3 < \Delta(H)$, there is no common neighbor between $\{v_1,v_2\}$ and $\{v_3,v_4\}$ by Lemma \ref{L2.3}(2).
	
	Subcase $3.1$. $|N_H(v_1) \cap N_H(v_2)|=2$.
	
	Let $N_H(v_2)=\{v_1,v_5,v_6\}$. Therefore, all neighbors of $v_3$ and $v_4$ are contained in $\{v_7,\dots,\\v_{11}\}$. Let ${v_7,v_8} \in N_H(v_3)$. By the Observation, we have $v_5v_6 \notin E(H)$.
	
	If $|N_H(v_3) \cap N_H(v_4)|=0$, then let $N_H(v_4)=\{v_3,v_9,v_{10}\}$. We claim that $v_5$ is not adjacent to any vertex among $\{v_7,\dots,v_{10}\}$. Otherwise, suppose that $v_5v_7 \in E(H)$; then $H$  contains the configuration in Figure 1(b) with
	$(x,y,z,u,v,w):=(v_5,v_7,v_3,v_1,v_6,v_2)$, contradicting  {Proposition} \ref{P2.12}.  The other cases are similar.
	Thus, $v_5$ has at most three neighbors $v_1,v_2,v_{11}$, contradicting the fact  that $d_H(v_5)=4$.

	If $|N_H(v_3) \cap N_H(v_4)|=1$, then let $N_H(v_4)=\{v_3,v_7,v_9\}$. We claim that there is no edge between $\{v_5,v_6\}$ and $\{v_7,v_8,v_9\}$. Otherwise, suppose that $v_5v_7 \in E(H)$; then $H$  contains the configuration in Figure 1(b) with
	$(x,y,z,u,v,w):=(v_5,v_7,v_3,v_1,v_6,v_2)$, contradicting {Proposition} \ref{P2.12}.  The other cases are similar.
	Then $v_5$ and $v_6$ are both adjacent to $v_{10}$ and $v_{11}$. We claim that $v_{10}$ is not adjacent to any vertex among $\{v_7, v_8, v_9\}$. Otherwise, suppose that $v_7v_{10} \in E(H)$; then $H$  contains the configuration in Figure 1(e) with
	$(x,y,z,u,v,w,s):=(v_5,v_2,v_1,v_6,v_{10},v_7,v_3)$, contradicting {Proposition} \ref{P2.15}.  The other cases are similar.
	Therefore, $v_{10}$ has at most three neighbors $v_5,v_6,v_{11}$, contradicting the fact that $d_H(v_{10})=4$.

	If $|N_H(v_3) \cap N_H(v_4)| =2$, then $N_H(v_4)=\{v_3,v_7,v_8\}$ and we also have $v_7v_8 \notin E(H)$ by the Observation.
	We claim that there is no edge between $\{v_5,v_6\}$ and $\{v_7,v_8\}$. Otherwise, suppose that $v_5v_7 \in E(H)$; then $H$  contains the configuration in Figure 1(b) with
	$(x,y,z,u,v,w):=(v_5,v_7,v_3,v_1,v_6,v_2)$, contradicting {Proposition} \ref{P2.12}. The other cases are similar.
	We consider the following two subcases. If $v_5$ and $v_6$ have two common $4$-neighbors, then let $v_9,v_{10} \in N_H(v_5) \cap N_H(v_6)$. We claim that $v_7$ is not adjacent to either $v_9$ or $v_{10}$. Otherwise, suppose that $v_7v_9 \in E(H)$; then $H$  contains the configuration in Figure 1(e) with
	$(x,y,z,u,v,w,s):=(v_5,v_2,v_1,v_6,v_9,v_7,v_3)$, contradicting {Proposition} \ref{P2.15}.  The other case is similar. Hence, $v_7$ has at most three neighbors $v_3,v_4,v_{11}$, contradicting the fact  that $d_H(v_7)=4$. If $v_5$ and $v_6$ have exactly one common $4$-neighbor, then let $v_{10},v_{11} \in N_H(v_5)$ and $v_9,v_{10} \in N_H(v_6)$. We claim that $v_7$ is not adjacent to $v_{10}$. Otherwise, suppose that $v_7v_{10} \in E(H)$; then $H$  contains the configuration in Figure 1(e) with
	$(x,y,z,u,v,w,s):=(v_5,v_2,v_1,v_6,v_{10},v_7,v_3)$, contradicting {Proposition} \ref{P2.15}.
	Thus, $v_7$ is adjacent to both $v_9$ and $v_{11}$. Similarly, $v_8$ is adjacent to both $v_9$ and $v_{11}$. Since all remaining adjacencies are uniquely determined by the degree conditions, $H$ is precisely the graph shown in Figure 2(d), which has a proper $4$-edge-coloring, contradicting the fact that $H$ is a $4$-critical graph.

	Subcase $3.2$. $|N_H(v_1) \cap N_H(v_2)|=1$.
	
	Let $N_H(v_2)=\{v_1,v_5,v_{11}\}$. Therefore, all neighbors of $v_3$ and $v_4$ are contained in $\{v_7,\dots,v_{10}\}$. Let ${v_7,v_8} \in N_H(v_3)$.
	
	If $|N_H(v_3) \cap N_H(v_4)|=2$, then $N_H(v_4)=\{v_3,v_7,v_8\}$ and this reduces to Subcase 3.1.
	
	If $|N_H(v_3) \cap N_H(v_4)|=0$, then let $N_H(v_4)=\{v_3,v_9,v_{10}\}$. Note that $v_5$ is not adjacent to any vertex among $\{v_7,\dots, v_{10}\}$. Otherwise, suppose that $v_5v_7 \in E(H)$; then $H$  contains the configuration in Figure 1(d) with
	$(x,y,z,u,v):=(v_1,v_5,v_2,v_7,v_3)$, contradicting  {Proposition} \ref{P2.14}.  The other cases are similar.
	Thus, $v_5$ is adjacent to both $v_6$ and $v_{11}$. We also have that $v_6$ is not adjacent to any vertex among $\{v_7,\dots,v_{10}\}$. Otherwise, suppose that $v_6v_7 \in E(H)$; then $H$  contains the configuration in Figure 1(c) with
	$(x,y,z,u,v,w):=(v_1,v_5,v_2,v_6,v_7,v_3)$, contradicting {Proposition} \ref{P2.13}.  The other cases are similar.
	Hence, $v_6$ has at most three neighbors $v_1,v_5,v_{11}$, contradicting the fact  that $d_H(v_6)=4$.

	If $|N_H(v_3) \cap N_H(v_4)|=1$, then let $N_H(v_4)=\{v_3,v_7,v_9\}$. Note that $v_5$ is not adjacent to any vertex among $\{v_7,v_8,v_9\}$. Otherwise, suppose that $v_5v_7 \in E(H)$; then $H$ contains the configuration in Figure 1(d) with
	$(x,y,z,u,v):=(v_1,v_5,v_2,v_7,v_3)$, contradicting  {Proposition} \ref{P2.14}.  The other cases are similar.
	So $v_5$ is adjacent to exactly two vertices in $\{v_6,v_{10},v_{11}\}$. We consider the following two subcases.
	When $v_5v_6,v_5v_{11} \in E(H)$, we claim that $v_6$ is not adjacent to any vertex among $\{v_7,v_8,v_9\}$. Otherwise, suppose that $v_6v_7 \in E(H)$; then $H$  contains the configuration in Figure 1(c) with
	$(x,y,z,u,v,w):=(v_1,v_5,v_2,v_6,v_7,v_3)$, contradicting  {Proposition} \ref{P2.13}.  The other cases are similar.
	Thus, $v_6$ is adjacent to both $v_{10}$ and $v_{11}$. Similarly, we  have that $v_{11}$ is adjacent to $v_{10}$. Since all remaining adjacencies are uniquely determined by the degree conditions, $H$ is precisely the graph shown in Figure 2(e), which has a proper $4$-edge-coloring, contradicting the fact that $H$ is a $4$-critical graph.
	When $v_5v_6,v_5v_{10} \in E(H)$ (by symmetry), $v_6$ is not adjacent to any vertex among $\{v_7,v_8,v_9\}$, by the same reasoning as in the proof of the case $v_5v_6, v_5v_{11} \in E(H)$. Thus, $v_6$ is adjacent to both $v_{10}$ and $v_{11}$. Note that $v_7$ is not adjacent to $v_{11}$. Otherwise, suppose that $v_7v_{11} \in E(H)$; then $H$  contains the configuration in Figure 1(d) with
	$(x,y,z,u,v):=(v_3,v_7,v_4,v_{11},v_2)$, contradicting {Proposition} \ref{P2.14}. So $v_7$ is adjacent to exactly two vertices in $\{v_8,v_9,v_{10}\}$.
	  If $v_7v_8,v_7v_9 \in E(H)$, then  $v_{11}$ is not adjacent to either $v_8$ or $v_9$. Otherwise, suppose that $v_8v_{11} \in E(H)$; then $H$  contains the configuration in Figure 1(c) with
	$(x,y,z,u,v,w):=(v_3,v_7,v_4,v_8,v_{11},v_2)$, contradicting {Proposition} \ref{P2.13}.   The other cases are similar.
	Thus, $v_{11}$ has at most three neighbors $v_2,v_6,v_{10}$, contradicting the fact  that $d_H(v_{11})=4$.
	If $v_7v_8,v_7v_{10} \in E(H)$ (by symmetry), then it follows from the degree requirements that $v_8,v_{10},v_{11} \in N_H(v_9)$ and $v_8 \in N_H(v_{11})$. Now $H$  contains the configuration in Figure 1(c) with
	$(x,y,z,u,v,w):=(v_3,v_7,v_4,v_8,v_{11},v_2)$, contradicting {Proposition} \ref{P2.13}.

	Subcase $3.3$. $|N_H(v_1) \cap N_H(v_2)|=0$.
	
	Let $ N_H(v_2)=\{v_1,v_{10},v_{11}\}$. Therefore, all $4$-neighbors of $v_3$ and $v_4$ are contained in $\{v_7,v_8,v_9\}$ and let ${v_7,v_8} \in N(v_3)$.
	If $|N_H(v_3) \cap N_H(v_4)|=1$, then let $N_H(v_4)=\{v_3,v_7,v_9\}$, and this reduces to Subcase 3.2. If $|N_H(v_3) \cap N_H(v_4)|=2$, then let $N_H(v_4)=\{v_3,v_7,v_8\}$, and this reduces to Subcase 3.1. This completes the proof.
\end{proof}

\section{Proofs of Propositions 2.13-2.16 }

\textbf{Proposition 2.13.}
Suppose that $G$ is a graph with $\Delta(G)=4$ and contains the configuration in $Figure$ 1(c). If $\chi'(G-e)=4$, then $\chi'(G)=4$.

\begin{proof}
	Suppose to the contrary that $\chi'(G)=5$ and let $\phiv\in\CC^4(G-xy)$. Note that $xy$ is a critical edge. It follows that $(x,xy,y)$ is a multi-fan with respect to $xy$ and $\varphi$, so $\{x,y\}$ is $\varphi$-elementary by Lemma \ref{L2.2}(1). Without loss of generality, assume that $1,2 \in \overline{\varphi}(x)$ and $3 \in \overline{\varphi}(y)$. We first claim that $\phiv(xz)=3$. Otherwise, we have $\phiv(xz)=4$ and $\phiv(yz)=1$ (by symmetry between colors $1$ and $2$). Thus $2\in\overline{\varphi}(z)$ or $3\in\overline{\varphi}(z)$. Since $\phiv(yz)=1 \in \overline{\varphi}(x)$, $(y,yx,x,yz,z)$ is a multi-fan at $y$ with respect to $yx$ and $\phiv$. However, $2 \in \overline{\varphi}(x) \cap \overline{\varphi}(z)$ or $3 \in \overline{\varphi}(y) \cap \overline{\varphi}(z)$ gives a contradiction to Lemma \ref{L2.2}(1).
	Now we have $\phiv(xz)=3$. Thus $\phiv(xu)=4$. Since $(x,xy,y,xz,z)$ is a multi-fan at $x$ with respect to $xy$ and $\phiv$, $\{x,y,z\}$ is $\phiv$-elementary by Lemma \ref{L2.2}(1) and so $4 \in \overline{\varphi}(z)$ . By symmetry  between colors $1$ and $2$, we have $\phiv(yz)=1$ and $\phiv(yu)=2$. Now $\phiv(uv)=1$ or $\phiv(uv)=3$. We also claim that $\phiv(uv)=1$; otherwise, since $(x,xz,z,zy,y)$ is a $(1,3)$-chain, doing a $(1,3)$-swap along the $(1,3)$-chain containing the edge $uv$ makes $\phiv(uv)=1$, and this swap does not affect the colors of the previously considered edges.
	Note that $1,2 \in \overline{\varphi}(x)$, $3 \in \overline{\varphi}(y)$, $4 \in \overline{\varphi}(z)$, $\phiv(yz)=\phiv(uv)=1$, $\phiv(yu)=2$, $\phiv(xz)=3$, and $\phiv(xu)=4$. To reach contradictions, we consider the following three cases: $\phiv(vw)=2$, $\phiv(vw)=3$, and $\phiv(vw)=4$.
	
	Case $1$. $\phiv(vw)=2$.
	
	Assume that $1 \in \overline{\varphi}(w)$. Otherwise, we have $3 \in \overline{\varphi}(w)$ or $4 \in \overline{\varphi}(w)$. Since $(y,yx,x,yz,z)$ is a multi-fan at $y$ with respect to $yx$ and $\phiv$, $y$ and $z$ are $(3,4)$-linked by Lemma \ref{L2.2}(2). Since $(x,xy,y,xz,z)$ is a multi-fan at $x$ with respect to $xy$ and $\phiv$, $x$ and $z$ are $(1,4)$-linked by Lemma \ref{L2.2}(2). If $3 \in \overline{\varphi}(w)$, then we do a $(3,4)$-swap at $w$. Note that $x$ and $z$ remain $(1,4)$-linked after this swap. Then we do a $(1,4)$-swap at $w$, which makes $1 \in \overline{\varphi}(w)$. If $4 \in \overline{\varphi}(w)$, then doing a $(1,4)$-swap at $w$ makes $1 \in \overline{\varphi}(w)$. Since $(x,xz,z,zy,y)$ is a $(1,3)$-chain, we do a $(1,3)$-swap at $x$.  Now $(y,yu,u,uv,v,vw,w)$ is a $(1,2)$-chain, contradicting that $P_y(1,2)=P_x(1,2)$, which follows from Lemma \ref{L2.2}(2).
	
	Case $2$. $\phiv(vw)=3$.
	
	Assume that $2 \in \overline{\varphi}(w)$. Otherwise, we have $1 \in \overline{\varphi}(w)$ or $4 \in \overline{\varphi}(w)$. Since $(x,xy,y,xz,z)$ is a multi-fan at $x$ with respect to $xy$ and $\phiv$, $x$ and $z$ are both $(1,4)$-linked and
	$(2,4)$-linked by Lemma \ref{L2.2}(2). If $1 \in \overline{\varphi}(w)$, then we do a $(1,4)$-swap at $w$. Note that $x$ and $z$ remain
	$(2,4)$-linked after this swap. Then we do a $(2,4)$-swap at $w$, which makes $2 \in \overline{\varphi}(w)$. If $4 \in \overline{\varphi}(w)$, then doing a $(2,4)$-swap at $w$ makes $2 \in \overline{\varphi}(w)$. Since $(x,xy,y)$ is a multi-fan, $x$ and $y$ are $(2,3)$-linked by Lemma \ref{L2.2}(2). Doing a $(2,3)$-swap at $w$ reduces to Case 1.

	Case $3$. $\phiv(vw)=4$.
	
	Note that $1 \notin \overline{\varphi}(w)$ since otherwise, $(x,xu,u,uv,v,vw,w)$ is a $(1,4)$-chain, contradicting that $P_x(1,4)=P_z(1,4)$, which follows from Lemma \ref{L2.2}(2). Since $(y,yx,x,yz,z)$ is a multi-fan at $y$ with respect to $yx$ and $\phiv$, $y$ and $z$ are $(3,4)$-linked by Lemma \ref{L2.2}(2). Since $(x,xy,y,xz,z)$ is a multi-fan at $x$ with respect to $xy$ and $\phiv$, $x$ and $z$ are $(2,4)$-linked by Lemma \ref{L2.2}(2). If $2 \in \overline{\varphi}(w)$, then doing a $(2,4)$-swap at $w$  reduces to Case 1.
	If $3 \in \overline{\varphi}(w)$, then doing a $(3,4)$-swap at $w$  reduces to Case 2. This completes the proof.
\end{proof}

\textbf{Proposition 2.14.}
Suppose that $G$ is a graph with $\Delta(G)=4$ and contains the configuration in $Figure$ 1(d). If $\chi'(G-e)=4$, then $\chi'(G)=4$.

\begin{proof}
	Suppose to the contrary that $\chi'(G)=5$ and let  $\phiv\in\CC^4(G-xy)$. Note that $xy$ is a critical edge. It follows that $(x,xy,y)$ is a multi-fan, so $\{x,y\}$ is $\varphi$-elementary by Lemma \ref{L2.2}(1). Without loss of generality, assume that $1,2 \in \overline{\varphi}(x)$ and $3 \in \overline{\varphi}(y)$. We claim that $\phiv(xz)=3$. Otherwise, we have $\phiv(xz)=4$ and $\phiv(yz)=1$ (by symmetry between colors $1$ and $2$). Thus $2 \in\overline{\varphi}(z)$ or $3 \in\overline{\varphi}(z)$. Since $\phiv(yz)=1 \in \overline{\varphi}(x)$, $(y,yx,x,yz,z)$ is a multi-fan at $y$ with respect to $yx$ and $\phiv$. However, $2 \in \overline{\varphi}(x) \cap \overline{\varphi}(z)$ or $3 \in \overline{\varphi}(y) \cap \overline{\varphi}(z)$ gives a contradiction to Lemma \ref{L2.2}(1). Now we have $\phiv(xz)=3$. Since $(x,xy,y,xz,z)$ is a multi-fan at $x$ with respect to $xy$ and $\phiv$, $\{x,y,z\}$ is $\phiv$-elementary by Lemma \ref{L2.2}(1) and so $4 \in \overline{\varphi}(z)$.  By symmetry between colors $1$ and $2$, we have $\phiv(yz)=1$.
	Note that $1,2 \in \overline{\varphi}(x)$, $3 \in \overline{\varphi}(y)$, $4 \in \overline{\varphi}(z)$, $\phiv(yz)=1$, and $\phiv(xz)=3$. To reach contradictions, we consider the following two cases: $\phiv(yu)=2$ and $\phiv(yu)=4$.
	
	Case $1$. $\phiv(yu)=2$.
	
	Subcase $1.1$. $\phiv(uv)=1$.

	Assume that $2 \in \overline{\varphi}(v)$. Otherwise, we have $3\in \overline{\varphi}(v)$ or $4 \in \overline{\varphi}(v)$. By Lemma \ref{L2.2}(2), since $(y,yx,x,yz,z)$ is a multi-fan at $y$ with respect to $yx$ and $\phiv$, it follows that $y$ and $x$ are $(2,3)$-linked, and $y$ and $z$ are
	$(3,4)$-linked. If $3 \in \overline{\varphi}(v)$, then doing a $(2,3)$-swap at $v$ makes $2 \in \overline{\varphi}(v)$. If $4 \in \overline{\varphi}(v)$, then we do a $(3,4)$-swap at $v$. Note that $x$ and $y$ remain $(2,3)$-linked after this swap. Then doing a $(2,3)$-swap at $v$ makes $2 \in \overline{\varphi}(v)$. Since $(x,xz,z,zy,y)$ is a $(1,3)$-chain, we do a $(1,3)$-swap along the $(1,3)$-chain containing the edge $uv$.  Now $(y,yu,u,uv,v)$ is a $(2,3)$-chain, contradicting that $P_y(2,3)=P_x(2,3)$, which follows from Lemma \ref{L2.2}(2).

	Subcase $1.2$. $\phiv(uv)=3$.
	
	Since $(x,xz,z,zy,y)$ is a $(1,3)$-chain, we do a $(1,3)$-swap along the $(1,3)$-chain containing the edge $uv$. This reduces to Subcase 1.1.
	
	Subcase $1.3$. $\phiv(uv)=4$.

	Assume that $1 \in \overline{\varphi}(v)$. Otherwise, we have $2 \in \overline{\varphi}(v)$ or $3 \in \overline{\varphi}(v)$. Since $(x,xy,y)$ is a multi-fan, $x$ and $y$ are both $(1,3)$-linked and  $(2,3)$-linked by Lemma \ref{L2.2}(2). If $2 \in \overline{\varphi}(v)$, then we do a $(2,3)$-swap at $v$. Note that $x$ and $y$ remain
	$(1,3)$-linked after this swap. Then doing a $(1,3)$-swap at $v$ makes $1 \in \overline{\varphi}(v)$. If $3 \in \overline{\varphi}(v)$, then doing a $(1,3)$-swap at $v$ makes $1 \in \overline{\varphi}(v)$. Since $(x,xy,y,xz,z)$ is a multi-fan at $x$ with respect to $xy$ and $\phiv$, $x$ and $z$ are $(1,4)$-linked by Lemma \ref{L2.2}(2). We do a $(1,4)$-swap at $v$, which reduces to Subcase 1.1.

	Case $2$. $\phiv(yu)=4$.
	
	Subcase $2.1$. $\phiv(uv)=1$.
	
	Note that $4 \notin \overline{\varphi}(v)$ since otherwise, $(z,zy,y,yu,u,uv,v)$ is a $(1,4)$-chain, contradicting that $P_z(1,4)=P_x(1,4)$, which follows from Lemma \ref{L2.2}(2).
	Assume that $2 \in \overline{\varphi}(v)$. If not, then $3 \in \overline{\varphi}(v)$. Since $x$ and $y$ are $(2,3)$-linked, we do a $(2,3)$-swap at $v$, which makes $2 \in \overline{\varphi}(v)$. Now we have $2 \in \overline{\varphi}(v)$. Since $(x,xy,y,xz,z)$ is a multi-fan at $x$ with respect to $xy$ and $\phiv$, $x$ and $z$ are $(2,4)$-linked by Lemma \ref{L2.2}(2). We do a $(2,4)$-swap at $v$. If $yu \notin P_v(2,4)$, then $(z,zy,y,yu,u,uv,v)$ is a $(1,4)$-chain, contradicting that $P_z(1,4)=P_x(1,4)$, which follows from Lemma \ref{L2.2}(2). If $yu \in P_v(2,4)$, then the case reduces to Case 1.

	Subcase $2.2$. $\phiv(uv)=2$.

	Assume that $4 \in \overline{\varphi}(v)$. Otherwise, we have $1 \in \overline{\varphi}(v)$ or $3 \in \overline{\varphi}(v)$.  Since $(y,yx,x,yz,z)$ is a multi-fan at $y$ with respect to $yx$ and $\phiv$, $y$ and $z$ are $(3,4)$-linked by Lemma \ref{L2.2}(2). Since $(x,xy,y,xz,z)$ is a multi-fan at $x$ with respect to $xy$ and $\phiv$, $x$ and $z$ are $(1,4)$-linked by Lemma \ref{L2.2}(2). If $1 \in \overline{\varphi}(v)$, then doing a $(1,4)$-swap at $v$ makes $4 \in \overline{\varphi}(v)$. If $3 \in \overline{\varphi}(v)$, then doing a $(3,4)$-swap at $v$ makes $4 \in \overline{\varphi}(v)$. Since $(x,xy,y,xz,z)$ is a multi-fan at $x$ with respect to $xy$ and $\phiv$, $x$ and $z$ are $(2,4)$-linked by Lemma \ref{L2.2}(2). We do a $(2,4)$-swap at $v$, which reduces to Case 1.
	
	Subcase $2.3$. $\phiv(uv)=3$.
	
	Since $(x,xz,z,zy,y)$ is a $(1,3)$-chain, we do a $(1,3)$-swap along the  $(1,3)$-chain containing the edge $uv$. Then the case reduces to Subcase 2.1. This completes the proof.
\end{proof}

\textbf{Proposition 2.15.}
Suppose that $G$ is a graph with $\Delta(G)=4$ and contains the configuration in $Figure$ 1(e). If $\chi'(G-e)=4$, then $\chi'(G)=4$.

\begin{proof}
	Suppose to the contrary that $\chi'(G)=5$ and let $\phiv\in\CC^4(G-xy)$. Note that $xy$ is a critical edge. It follows that $(x,xy,y)$ is a multi-fan, so $\{x,y\}$ is $\varphi$-elementary by Lemma \ref{L2.2}(1).  Without loss of generality, assume that $1,2 \in \overline{\varphi}(y)$ and $3 \in \overline{\varphi}(x)$. We first claim that $\phiv(zy)=3$. Otherwise, we have $\phiv(zy)=4$ and $\phiv(zx)=1$ (by symmetry between colors $1$ and $2$). Thus $2 \in\overline{\varphi}(z)$ or $3 \in\overline{\varphi}(z)$. Since $\phiv(zx)=1 \in \overline{\varphi}(y)$, $(x,xy,y,xz,z)$ is a multi-fan at $x$ with respect to $xy$ and $\phiv$. However, $2 \in \overline{\varphi}(z) \cap \overline{\varphi}(y)$ or $3 \in \overline{\varphi}(z) \cap \overline{\varphi}(x)$ gives a contradiction to Lemma \ref{L2.2}(1). Now we have $\phiv(zy)=3$. Thus $\phiv(uy)=4$. Since $(y,yx,x,yz,z)$ is a multi-fan at $y$ with respect to $yx$ and $\phiv$, $\{y,x,z\}$ is $\phiv$-elementary by Lemma \ref{L2.2}(1) and so $4 \in \overline{\varphi}(z)$. By symmetry  between colors $1$ and $2$, we have $\phiv(zx)=1$ and $\phiv(zu)=2$. Note that $(z,zu,u,uy,y)$ is a $(2,4)$-chain and $(x,xz,z,zy,y)$ is a $(1,3)$-chain. We also claim that $\phiv(xv)=2$. Otherwise, we have $\phiv(xv)=4$; then doing a $(2,4)$-swap along the $(2,4)$-chain containing the edge $xv$ makes $\phiv(xv)=2$. Finally, we claim that $\phiv(uv)=1$. Otherwise, we have $\phiv(uv)=3$; then doing a $(1,3)$-swap along the $(1,3)$-chain containing the edge $uv$ makes $\phiv(uv)=1$.
	Note that $1,2 \in \overline{\varphi}(y)$, $3 \in \overline{\varphi}(x)$, $4 \in \overline{\varphi}(z)$, $\phiv(zx)=\phiv(uv)=1$, $\phiv(zu)=\phiv(xv)=2$, $\phiv(zy)=3$, and $\phiv(uy)=4$.
	To reach contradictions, we consider the following two cases: $\phiv(vw)=3$ and $\phiv(vw)=4$.

	Case $1$. $\phiv(vw)=3$.
	
	Subcase $1.1$. $\phiv(ws)=1$.
	
	Assume that $2 \in \overline{\varphi}(s)$. Otherwise, we have $3 \in \overline{\varphi}(s)$ or $4 \in \overline{\varphi}(s)$.
	When $3 \in \overline{\varphi}(s)$,
	we do a $(2,3)$-swap at $s$ since $x$ and $y$ are $(2,3)$-linked, which reduces to $2 \in \overline{\varphi}(s)$.
	When $4 \in \overline{\varphi}(s)$, we have $xv\in P_s(2,4)$; otherwise, since $z$ and $y$ are $(2,4)$-linked, we do a $(2,4)$-swap at $s$, which reduces to $2 \in \overline{\varphi}(s)$. If $P_s(2,4)$ meets $v$ before $x$, then we do a $(2,4)$-swap on $P_{[s,v]}(2,4)$, color $xy$ with $2$, and uncolor $xv$. Since $(x,xz,z,zy,y)$ is a $(1,3)$-chain, we do a $(1,3)$-swap at $x$. Now $(v,vu,u,uy,y,yz,z)$ is a $(1,4)$-chain, contradicting that $P_v(1,4)=P_x(1,4)$, which follows from Lemma \ref{L2.2}(2). If $P_s(2,4)$ meets $x$ before $v$, then we do a $(2,4)$-swap on $P_{[s,x]}(2,4)$, recolor $zx:1\rightarrow4$, $zy:3\rightarrow1$, color $yx$ with $3$, and uncolor $xv$. Now $(v,vu,u,uz,z,zy,y)$ is a $(1,2)$-chain, contradicting that $P_v(1,2)=P_x(1,2)$, which follows from Lemma \ref{L2.2}(2).
	
	Now $2 \in \overline{\varphi}(s)$,
	we do a $(1,2)$-swap at $s$ since $(z,zu,u,uv,v,vx,x,xz,z)$ is a $(1,2)$-chain. Denote the new edge-coloring by $\phiv_1$. Note that $vu,vw \in P_s(1,3,\phiv_1)$;  otherwise, since $(x,xz,z,zy,y)$ is a $(1,3)$-chain, a $(1,3)$-swap at $s$ would make $(x,xv,v,vw,w,ws,s)$ a $(2,3)$-chain, contradicting that $P_x(2,3) = P_y(2,3)$, which follows from Lemma \ref{L2.2}(2).
	If $P_s(1,3,\phiv_1)$ meets $w$ before $u$, then we do a $(1,3)$-swap on $P_{[s,v]}(1,3,\phiv_1)$, recolor $xv:2\rightarrow3$, color $xy$ with $2$ and uncolor $uv$. Now $(u,uz,z,zx,x,xy,y)$ is a $(1,2)$-chain, contradicting that $P_u(1,2)=P_v(1,2)$, which follows from Lemma \ref{L2.2}(2). If $P_s(1,3,\phiv_1)$ meets $u$ before $w$, then we do a $(1,3)$-swap on $P_{[s,u]}(1,3,\phiv_1)$, recolor $zy:3\rightarrow2$, $zu:2\rightarrow3$, color $xy$ with $3$, and uncolor $uv$. Now $(v,vx,x,xz,z,zy,y)$ is a $(1,2)$-chain, contradicting that $P_v(1,2)=P_u(1,2)$, which follows from Lemma \ref{L2.2}(2).

	Subcase $1.2$. $\phiv(ws)=2$.
	
	Note that $(z,zu,u,uv,v,vx,x,xz,z)$ is a $(1,2)$-chain. We do a $(1,2)$-swap along the  $(1,2)$-chain containing the edge $ws$, which reduces to Subcase 1.1.
	
	Subcase $1.3$. $\phiv(ws)=4$.

	Assume that $1 \in \overline{\varphi}(s)$. Otherwise, we have $2 \in \overline{\varphi}(s)$ or $3 \in \overline{\varphi}(s)$. Note that $(z,zu,u,uv,v,\allowbreak vx,x,xz,z)$ is a $(1,2)$-chain. If $2 \in \overline{\varphi}(s)$, then doing a $(1,2)$-swap at $s$ makes $1 \in \overline{\varphi}(s)$. If $3 \in \overline{\varphi}(s)$, then since $x$ and $y$ are $(2,3)$-linked, doing a $(2,3)$-swap at $s$ makes $2 \in \overline{\varphi}(s)$. Now we have $1 \in \overline{\varphi}(s)$. Since $(y,yx,x,yz,z)$ is a multi-fan at $y$ with respect to $yx$ and $\phiv$, $z$ and $y$ are $(1,4)$-linked by Lemma \ref{L2.2}(2). We do a $(1,4)$-swap at $s$, which reduces to Subcase 1.1.
	
	Case $2$. $\phiv(vw)=4$.
	
	Subcase $2.1$. $\phiv(ws)=1$.
	
	Note that $4 \notin \overline{\varphi}(s)$ since otherwise, $(y,yu,u,uv,v,vw,w,ws,s)$ is a $(1,4)$-chain, contradicting that $P_y(1,4)=P_z(1,4)$, which follows from Lemma \ref{L2.2}(2).
	  By Lemma \ref{L2.2}(2), since $(y,yx,x,yz,z)$ is a multi-fan at $y$ with respect to $yx$ and $\phiv$, $x$ and $y$ are $(2,3)$-linked, and $z$ and $y$ are $(2,4)$-linked. Assume that $2 \in \overline{\varphi}(s)$; if not, then $3 \in \overline{\varphi}(s)$, and doing a $(2,3)$-swap at $s$ makes $2 \in \overline{\varphi}(s)$. Note that $vx,vw \in P_s(2,4)$; otherwise, doing a $(2,4)$-swap at $s$ makes $(y,yu,u,uv,v,vw,w,ws,s)$ a $(1,4)$-chain, contradicting that $P_y(1,4)=P_z(1,4)$, which follows from Lemma \ref{L2.2}(2). If $P_s(2,4)$ meets $w$ before $x$, then we do a $(2,4)$-swap on $P_{[s,v]}(2,4)$, color $xy$ with $2$, and uncolor $xv$. Since $(x,xz,z,zy,y)$ is a $(1,3)$-chain, we do a $(1,3)$-swap at $x$ and thus $(v,vu,u,uy,y,yz,z)$ is a $(1,4)$-chain, contradicting that $P_v(1,4)=P_x(1,4)$, which follows from Lemma \ref{L2.2}(2). If $P_s(2,4)$ meets $x$ before $w$, then we do a $(2,4)$-swap on $P_{[s,x]}(2,4)$, recolor $zx:1\rightarrow4$, $zy:3\rightarrow1$,  color $xy$ with $3$, and uncolor $xv$. Now
	$(v,vu,u,uz,z,zy,y)$ is a $(1,2)$-chain, contradicting that $P_v(1,2)=P_x(1,2)$, which follows from Lemma \ref{L2.2}(2).
	
	Subcase $2.2$. $\phiv(ws)=2$.
	
	Since $(z,zu,u,uv,v,vx,x,xz,z)$ is a $(1,2)$-chain, we do a $(1,2)$-swap along the  $(1,2)$-chain containing the edge $ws$, which reduces to Subcase 2.1.
	
	Subcase $2.3$. $\phiv(ws)=3$.
	
	By Lemma \ref{L2.2}(2), since $(x,xy,y,xz,z)$ is a multi-fan at $x$ with respect to $xy$ and $\phiv$, it follows that $x$ and $y$ are $(2,3)$-linked, and $x$ and $z$ are $(3,4)$-linked. If $1 \in \overline{\varphi}(s)$, then doing $(1,2)-(2,3)$-swaps at $s$ reduces to Subcase 2.2. If $2 \in \overline{\varphi}(s)$, then doing a $(2,3)$-swap at $s$ reduces to Subcase 2.2. If $4 \in \overline{\varphi}(s)$, then doing a $(3,4)$-swap at $s$ reduces to Case 1.	This completes the proof.
\end{proof}

\textbf{Proposition 2.16.}
Suppose that $G$ is a graph with $\Delta(G)=4$ and contains the configuration in $Figure$ 1(f). If $\chi'(G-e)=4$, then $\chi'(G)=4$.

\begin{proof}
	
	Suppose to the contrary that $\chi'(G)=5$ and let  $\phiv\in\CC^4(G-xy)$. Note that $xy$ is a critical edge. It follows that $(x,xy,y)$ is a multi-fan, so $\{x,y\}$ is $\varphi$-elementary by Lemma \ref{L2.2}(1). Without loss of generality, assume that $1,2 \in \overline{\varphi}(x)$ and $3 \in \overline{\varphi}(y)$. To reach contradictions, we consider the following two cases: $\phiv(yz)=1$ (by symmetry between colors $1$ and $2$) and $\phiv(yz)=4$.
	
	Case $1$. $\phiv(yz)=1$.	
	
	Since $\phiv(yz)=1 \in \overline{\varphi}(x)$, $(y,yx,x,yz,z)$ is a multi-fan at $y$ with respect to $yx$ and $\phiv$. By Lemma \ref{L2.2}(1),  $\{y,x,z\}$ is $\varphi$-elementary and thus $4 \in \overline{\varphi}(z)$. We consider the following two subcases: $\phiv(yv)=2$ and $\phiv(yv)=4$.
	
	Subcase $1.1$.  $\phiv(yv)=2$.
	
	%By symmetry between $vu$ and $vw$, we consider the following three subcases.
	
	Subcase $1.1.1$. $\phiv(vu)=1$ and $\phiv(vw)=3$.
	
	Since $(x,xy,y)$ is a multi-fan, $x$ and $y$ are both $(2,3)$-linked and $(1,3)$-linked by Lemma \ref{L2.2}(2). Note that $2 \notin \overline{\varphi}(w)$ since otherwise, $(y,yv,v,vw,w)$ is a $(2,3)$-chain, contradicting that $P_y(2,3)=P_x(2,3)$, which follows from Lemma \ref{L2.2}(2).  Now $1 \in \overline{\varphi}(w)$ or $4 \in \overline{\varphi}(w)$.

	When $1 \in \overline{\varphi}(w)$, we claim that $4 \in \overline{\varphi}(u)$. Otherwise, we have $2 \in \overline{\varphi}(u)$ or $3 \in \overline{\varphi}(u)$. If $2 \in \overline{\varphi}(u)$, then we do a $(1,3)$-swap at $x$. Note that $(y,yv,v,vu,u)$ is a $(1,2)$-chain, contradicting that  $P_y(1,2)=P_x(1,2)$, which follows from Lemma \ref{L2.2}(2). If $3 \in \overline{\varphi}(u)$, then doing a $(2,3)$-swap at $u$ reduces to $2 \in \overline{\varphi}(u)$. Now we have $4 \in \overline{\varphi}(u)$. Note that
	$1 \in \overline{\varphi}(w)$, $\phiv(vw)=3$, and 	$\phiv(vu)=1$.
	Since $x$ and $y$ are $(1,3)$-linked, we do a $(1,3)$-swap at $w$. Denote the new edge-coloring by $\phiv_1$. Since $\phiv_1(yz)=1 \in \overline{\varphi_1}(x)$, $(y,yx,x,yz,z)$ is a multi-fan at $y$ with respect to $yx$ and $\phiv_1$. By Lemma \ref{L2.2}(3),
	$x$ and $z$ are $(2,4)$-linked. We claim that $yv \in P_u(2,4,\phiv_1)$; otherwise, we do a $(2,4)$-swap at $u$, and then $(y,yv,v,vu,u)$ is a $(2,3)$-chain,  contradicting that $P_y(2,3)=P_x(2,3)$, which follows from Lemma \ref{L2.2}(2). When $P_u(2,4,\phiv_1)$ meets $v$ before $y$, we do a $(2,4)$-swap on $P_{[u,v]}(2,4,\phiv_1)$, recolor $yv:2\rightarrow3$, color $yx$ with $2$ and uncolor $vu$. Denote the new edge-coloring by $\phiv_{11}$. Note that $2,3 \in \overline{\varphi_{11}}(u)$ and $4\in \overline{\varphi_{11}}(v)$.
	Since $(v,vu,u)$ is a multi-fan, $v$ and $u$ are $(3,4)$-linked by Lemma \ref{L2.2}(2). Then we do a $(3,4)$-swap at $v$ and recolor $vw:1\rightarrow3$. Now $(v,vy,y,yz,z)$ is a $(1,4)$-chain, contradicting that $P_v(1,4)=P_u(1,4)$, which follows from Lemma \ref{L2.2}(2). When $P_u(2,4,\phiv_1)$ meets $y$ before $v$, we do a $(2,4)$-swap on $P_{[u,y]}(2,4,\phiv_1)$, recolor $vu:3\rightarrow2$, $yv:2\rightarrow3$, $yz:1\rightarrow4$, and color $yx$ with $1$. Now we get a new (proper) $4$-edge-coloring of $G$, contradicting that  $\chi'(G)=5$.
	
	When $4 \in \overline{\varphi}(w)$, we claim that $4 \in \overline{\varphi}(u)$. Otherwise, we have $2 \in \overline{\varphi}(u)$ or $3 \in \overline{\varphi}(u)$. If $2 \in \overline{\varphi}(u)$, then doing $(2,3)-(1,3)$-swaps at $u$ reduces to $1 \in \overline{\varphi}(w)$ and $4 \in \overline{\varphi}(u)$. If $3 \in \overline{\varphi}(u)$, then doing a $(1,3)$-swap at $u$ reduces to $1 \in \overline{\varphi}(w)$ and $4 \in \overline{\varphi}(u)$. Now we have $4 \in \overline{\varphi}(u)$. Next we assume that $vu,vw \notin P_y(1,3)=P_x(1,3)$. Otherwise, we have $vu,vw \in P_y(1,3)=P_x(1,3)$. We discuss the following two subcases to get contradictions.
	If $P_y(1,3)$ meets $w$ before $u$, then we do a $(1,3)$-swap on $P_{[y,w]}(1,3)$, color $yx$ with $1$, and uncolor $vw$. Denote the new edge-coloring by $\phiv_1$. Since $\phiv_1(vu)=1 \in \overline{\varphi_1}(w)$, $(v,vw,w,vu,u)$ is a multi-fan at $v$ with respect to $vw$ and $\phiv_1$. However, $4 \in \overline{\varphi_1}(u) \cap \overline{\varphi_1}(w)$ gives a contradiction to Lemma \ref{L2.2}(1). A similar argument works if $P_y(1,3)$ meets $u$ before $w$. Thus $vu,vw \notin P_y(1,3)=P_x(1,3)$. Since $(y,yx,x,yz,z)$ is a multi-fan at $y$ with respect to $yx$ and $\phiv$, $x$ and $z$ are $(2,4)$-linked by Lemma \ref{L2.2}(3). Note that $yv \in P_w(2,4)$;
	otherwise, doing a $(2,4)$-swap at $w$ makes $(y,yv,v,vw,w)$ a $(2,3)$-chain, contradicting that $P_y(2,3)=P_x(2,3)$, which follows from Lemma \ref{L2.2}(2).
	If $P_w(2,4)$ meets $y$ before $v$, then we do a $(2,4)$-swap on $P_{[w,y]}(2,4)$, recolor $yz:1\rightarrow4$, $vw:3\rightarrow2$, $yv:2\rightarrow3$, and color $yx$ with $1$. Now we get a new (proper) $4$-edge-coloring of $G$, contradicting that  $\chi'(G)=5$. If $P_w(2,4)$ meets $v$ before $y$, then we do a $(1,3)$-swap along the $(1,3)$-chain containing the edges $vu$ and $vw$. Then we do a $(2,4)$-swap on $P_{[w,v]}(2,4)$, recolor $vu:3\rightarrow4$,  $yv:2\rightarrow3$, and color $yx$ with $2$. Now we get a new (proper) $4$-edge-coloring of $G$, contradicting that  $\chi'(G)=5$.

	Subcase $1.1.2$. $\phiv(vu)=1$ and $\phiv(vw)=4$.
	
	By Lemma \ref{L2.2}(2), since $(y,yx,x,yz,z)$ is a multi-fan at $y$ with respect to $yx$ and $\phiv$, it follows that $y$ and $x$ are $(2,3)$-linked, and $y$ and $z$ are $(3,4)$-linked.
	Assume that $1 \in \overline{\varphi}(w)$. Otherwise, if $3 \in \overline{\varphi}(w)$, then doing a $(3,4)$-swap at $w$ reduces to Subcase 1.1.1; if $2 \in \overline{\varphi}(w)$, then doing a $(2,3)$-swap at $w$ reduces to $3 \in \overline{\varphi}(w)$.
	Now we have $1 \in \overline{\varphi}(w)$.
	
	Next we claim that $4 \in \overline{\varphi}(u)$.  Otherwise, if $3 \in \overline{\varphi}(u)$, then doing a $(3,4)$-swap at $u$ reduces to $4 \in \overline{\varphi}(u)$ (if $vw \notin P_u(3,4)$) or Subcase 1.1.1 (if $vw \in P_u(3,4)$); if $2 \in \overline{\varphi}(u)$, then doing a $(2,3)$-swap at $u$ reduces to $3 \in \overline{\varphi}(u)$. Now we have $4 \in \overline{\varphi}(u)$. Note that $x$ and $y$ are $(1,3)$-linked. When $vu \notin P_w(1,3)$,  doing $(1,3)-(3,4)$-swaps at $w$ reduces to Subcase 1.1.1. When $vu \in P_w(1,3)$, we do a $(1,3)$-swap at $w$. Denote the new edge-coloring by $\phiv_1$. Since $ \phiv_1(yz)=1 \in \overline{\varphi_1}(x)$, $(y,yx,x,yz,z)$ is a multi-fan at $y$ with respect to $yx$ and $\phiv_1$. By Lemma \ref{L2.2}(3),  $x$ and $z$ are $(2,4)$-linked. Note that $vy,vw \in P_u(2,4,\phiv_1)$; otherwise, we do a $(2,4)$-swap at $u$, and thus $(y,yv,v,vu,u)$ is a $(2,3)$-chain, contradicting that $P_y(2,3)=P_x(2,3)$, which follows from Lemma \ref{L2.2}(2). If $P_u(2,4,\phiv_1)$ meets $y$ before $w$, then we do a $(2,4)$-swap on $P_{[u,y]}(2,4,\phiv_1)$, recolor $yz:1\rightarrow4$, $vu:3\rightarrow2$, $yv:2\rightarrow3$, and color $yx$ with $1$.  Now we get a new (proper) $4$-edge-coloring of $G$, contradicting that $\chi'(G)=5$. If $P_u(2,4,\phiv_1)$ meets $w$ before $y$, then we do a $(2,4)$-swap on $P_{[u,v]}(2,4,\phiv_1)$, recolor $yv:2\rightarrow3$, color $yx$ with $2$ and uncolor $vu$. Denote the new edge-coloring by $\phiv_2$. Note that $vu$ is a critical edge. Since $\phiv_2(vw)=2 \in \overline{\varphi_2}(u)$, $(v,vu,u,vw,w)$ is a multi-fan at $v$ with respect to $vu$ and $\phiv_2$. However, $3 \in \overline{\varphi_2}(u) \cap \overline{\varphi_2}(w)$ gives a contradiction to Lemma \ref{L2.2}(1).

	Subcase $1.1.3$. $\phiv(vu)=3$ and $\phiv(vw)=4$.
	
	Since $(x,xy,y)$ is a multi-fan, $x$ and $y$ are both $(1,3)$-linked and  $(2,3)$-linked by Lemma \ref{L2.2}(2). First we assume that $2 \in \overline{\varphi}(w)$. Otherwise, if $3 \in \overline{\varphi}(w)$, then doing a $(2,3)$-swap at $w$ reduces to $2 \in \overline{\varphi}(w)$; if $1 \in \overline{\varphi}(w)$, then doing a $(1,3)$-swap at $w$ reduces to Subcase 1.1.2 (if $vu \in P_w(1,3)$) or $3 \in \overline{\varphi}(w)$ (if $vu \notin P_w(1,3)$). Now we have $2 \in \overline{\varphi}(w)$.
	
	Note that $2\notin \overline{\varphi}(u)$ since otherwise, $(y,yv,v,vu,u)$ is a $(2,3)$-chain, contradicting that $P_y(2,3)=P_x(2,3)$, which follows from Lemma \ref{L2.2}(2).
	If $1 \in \overline{\varphi}(u)$, then doing a $(1,3)$-swap at $u$ reduces to Subcase 1.1.2. So we only consider $4 \in \overline{\varphi}(u)$. When $P_w(2,4) \neq P_u(2,4)$, we do a $(2,4)$-swap at $w$, and thus $(y,yv,v,vu,u)$ is a $(3,4)$-chain, contradicting that $P_y(3,4)=P_z(3,4)$, which follows from Lemma \ref{L2.2}(2).
	When $P_w(2,4) = P_u(2,4)$, doing a $(2,4)$-swap at $w$ and a $(2,3)$-swap at $u$ makes $(y,yv,v,vw,w)$  a $(3,4)$-chain, contradicting that $P_y(3,4)=P_z(3,4)$, which follows from Lemma \ref{L2.2}(2).

	Subcase $1.2$.  $\phiv(yv)=4$.
	
	Subcase $1.2.1$. $\phiv(vu)=1$ and $\phiv(vw)=2$.
	
	All cases can reduce to Subcase 1.1. If $4 \in \overline{\varphi}(w)$, then doing a $(2,4)$-swap at $w$ reduces to Subcase 1.1. If $3 \in \overline{\varphi}(w)$, then doing $(3,4)-(2,4)$-swaps at $w$ reduces to Subcase 1.1.  If $1 \in \overline{\varphi}(w)$, then we do a $(1,3)$-swap at $w$ since $x$ and $y$ are $(1,3)$-linked. Then doing $(3,4)-(2,4)$-swaps at $w$ reduces to Subcase 1.1.

	Subcase $1.2.2$. $\phiv(vu)=1$ and $\phiv(vw)=3$.
	
	We first claim that $2 \in \overline{\varphi}(u)$. Otherwise, if $3 \in \overline{\varphi}(u)$, then doing a $(2,3)$-swap at $u$ reduces to Subcase 1.2.1 (if $vw \in P_u(2,3)$) or $2 \in \overline{\varphi}(u)$ (if $vw \notin P_u(2,3)$); if $4 \in \overline{\varphi}(u)$, then doing a $(3,4)$-swap at $u$ reduces to $3 \in \overline{\varphi}(u)$. Now we have $2 \in \overline{\varphi}(u)$.
	
	Note that $4 \notin \overline{\varphi}(w)$ since $y$ and $z$ are $(3,4)$-linked. We discuss the following two cases.  If $2 \in \overline{\varphi}(w)$, then doing a $(2,3)$-swap at $w$ reduces to Subcase 1.2.1. If $1 \in \overline{\varphi}(w)$, then we do a $(1,3)$-swap at $w$. Denote the new edge-coloring by $\phiv_1$. Since $(y,yx,x,yz,z)$ is a multi-fan at $y$ with respect to $yx$ and $\phiv_1$, $x$ and $z$ are $(2,4)$-linked by Lemma \ref{L2.2}(3).
	When $yv \notin P_x(2,4,\phiv_1)=P_z(2,4,\phiv_1)$,  doing a $(2,4)$-swap along the $(2,4)$-chain containing the edge $yv$ reduces to Subcase 1.1. When $yv \in P_x(2,4,\phiv_1)=P_z(2,4,\phiv_1)$, we do a $(2,4)$-swap at $u$, and thus $(y,yv,v,vu,u)$ is a $(3,4)$-chain, contradicting that $P_y(3,4)=P_z(3,4)$, which follows from Lemma \ref{L2.2}(2).

	Subcase $1.2.3$. $\phiv(vu)=2$ and $\phiv(vw)=3$.
	
	If $4 \in \overline{\varphi}(u)$, then doing a $(2,4)$-swap at $u$ reduces to Subcase 1.1. If $3 \in \overline{\varphi}(u)$, then doing $(3,4)-(2,4)$-swaps at $u$ reduces to Subcase 1.1. If $1 \in \overline{\varphi}(u)$, then doing a $(1,3)$-swap at $u$ reduces to Subcase 1.2.1 (if $vw \in P_u(1,3)$) or $3 \in \overline{\varphi}(u)$ (if $vw \notin P_u(1,3)$).
	
	Case $2$. $\phiv(yz)=4$.	
	
	Assume that $3 \in \overline{\varphi}(z)$. Otherwise, we have $1 \in \overline{\varphi}(z)$ (by symmetry  between colors $1$ and $2$). Since $x$ and $y$ are $(1,3)$-linked, doing a $(1,3)$-swap at $z$ makes $3 \in \overline{\varphi}(z)$. Thus $3 \in \overline{\varphi}(z)$ and $\phiv(yv) =1$ (by symmetry between colors $1$ and $2$). By symmetry between $vu$ and $vw$, we consider the following three subcases.
	
	Subcase $2.1$. $\phiv(vu)=2$ and $\phiv(vw)=3$.
	
	Note that $1 \notin \overline{\varphi}(w)$ since otherwise, $(y,yv,v,vw,w)$ is a $(1,3)$-chain, contradicting that $P_y(1,3)=P_x(1,3)$, which follows from Lemma \ref{L2.2}(2).
	Now $2\in \overline{\varphi}(w)$ or $4 \in \overline{\varphi}(w)$. Assume that $2 \in \overline{\varphi}(w)$. Otherwise, since $y$ and $z$ are $(3,4)$-linked, we do a $(3,4)$-swap at $w$. Denote the new edge-coloring by $\phiv_1$. If $P_z(1,3,\phiv_1)=P_w(1,3,\phiv_1)$, then we do a $(1,3)$-swap at $z$. If $P_z(1,3,\phiv_1)\neq P_w(1,3,\phiv_1)$, then we do a $(1,3)$-swap at both $z$ and $w$. Denote the new edge-coloring by $\phiv_2$. Now $(z,zy,y,yv,v,vw,w)$ is a $(1,4)$-chain, we do a $(1,4)$-swap at $z$ which reduces to Case 1. Now we have $2 \in \overline{\varphi}(w)$. We discuss the following three cases.
	
	If $1 \in \overline{\varphi}(u)$, then we do a $(2,3)$-swap at $x$ and thus $(y,yv,v,vu,u)$ is a $(1,2)$-chain, contradicting that $P_y(1,2)=P_x(1,2)$, which follows from Lemma \ref{L2.2}(2).
	If $3 \in \overline{\varphi}(u)$, then we do a $(1,3)$-swap at $u$, followed by a $(2,3)$-swap at $w$. Note that $(y,yv,v,vu,u)$ is a $(1,3)$-chain, contradicting that $P_y(1,3)=P_x(1,3)$, which follows from Lemma \ref{L2.2}(2).
	If $4 \in \overline{\varphi}(u)$, then we do a $(1,3)$-swap at $z$, followed by a $(2,3)$-swap at $w$. Denote the new edge-coloring by $\phiv_1$. Note that $1\in\overline{\varphi_1}(x)\cap\overline{\varphi_1}(z)$.
	When $P_x(1,4,\phiv_1) \neq P_z(1,4,\phiv_1)$,  doing a $(1,4)$-swap at $z$ reduces to Subcase 1.2. When $P_x(1,4,\phiv_1) = P_z(1,4,\phiv_1)$, we do a $(1,4)$-swap at $u$. Note that $(y,yv,v,vu,u)$ is a $(1,3)$-chain, contradicting that $P_y(1,3)=P_x(1,3)$, which follows from Lemma \ref{L2.2}(2).
	
	Subcase $2.2$. $\phiv(vu)=2$ and $\phiv(vw)=4$.
	
	Since $(y,yz,z)$ is a $(3,4)$-chain, we do a $(3,4)$-swap along the $(3,4)$-chain containing the edge $vw$, which reduces to Subcase 2.1.
	
	Subcase $2.3$. $\phiv(vu)=3$ and $\phiv(vw)=4$.
	
	If $1 \in \overline{\varphi}(w)$, then we recolor $yz:4\rightarrow3$ and thus $(y,yv,v,vw,w)$ is a $(1,4)$-chain, contradicting that $P_y(1,4)=P_x(1,4)$, which follows from Lemma \ref{L2.2}(2). If $3 \in \overline{\varphi}(w)$, then we do a $(1,3)$-swap at $z$ when $P_z(1,3)=P_w(1,3)$, or do a $(1,3)$-swap at both $z$ and $w$ when  $P_z(1,3)\neq P_w(1,3)$. Followed by a $(1,4)$-swap at $z$ which reduces to Subcase 1.2.2.
	Now we only consider $2 \in \overline{\varphi}(w)$. Note that $1 \notin \overline{\varphi}(u)$ since $x$ and $y$ are $(1,3)$-linked. If $2 \in \overline{\varphi}(u)$, then we do a $(1,3)$-swap at $z$, followed by a $(2,3)$-swap at $u$ and another $(1,3)$-swap at $z$. This reduces to Subcase 2.2. If $4 \in \overline{\varphi}(u)$, then we do a $(1,3)$-swap at $z$. Denote the new edge-coloring by $\phiv_1$. Note that $1\in\overline{\varphi_1}(x)\cap\overline{\varphi_1}(z)$. When $P_x(1,4,\phiv_1) \neq P_z(1,4,\phiv_1)$, doing a $(1,4)$-swap at $z$ reduces to Case 1. When $P_x(1,4,\phiv_1)=P_z(1,4,\phiv_1)$, doing a $(1,4)$-swap at $u$. Note that $(y,yv,v,vu,u)$ is a $(1,3)$-chain, contradicting that $P_y(1,3)=P_x(1,3)$, which follows from  Lemma \ref{L2.2}(2). This completes the proof.
\end{proof}

\noindent {\bf Acknowledgements}

This work was supported by Hebei Natural Science Foundation A2026208005 and Foundation of China Scholarship Council 202508130088.

\end{document}